\newtheorem{Theorem}{Theorem}[section]
\newtheorem{Lemma}[Theorem]{Lemma}
\newtheorem{Corollary}[Theorem]{Corollary}
\newtheorem{Observation}[Theorem]{Observation}
\newtheorem{Proposition}[Theorem]{Proposition}
\newtheorem*{introThm}{Theorem}
\theoremstyle{definition}
\newtheorem{Definition}[Theorem]{Definition}
\newtheorem{Example}[Theorem]{Example}
\newtheorem{Remark}[Theorem]{Remark}
\newcommand{\bv}[1]{\mathbf{#1}}
\newcommand{\defbf}[1]{\emph{#1}}
\newcommand{\force}{\rightarrow}
\newcommand{\dcup}{\mathbin{\dot{\cup}}}
\newcommand{\ellstar}{\ell_{\star}}
\newcommand{\astar}{a_{\star}}
\newcommand{\ellstarhat}{\hat{\ell}_{\star}}
\newcommand{\blowup}[2]{#1^{(#2)}}
\newcommand{\zr}[1]{Z_{[#1]}}
\title{Fractional Zero Forcing via Three-color Forcing Games}
\author{
Leslie Hogben\thanks{American Institute of
Mathematics, 600 E. Brokaw Rd., San Jose, CA 95112, USA. \texttt{hogben@aimath.org.}}\phantom{$^*$}\thanks{Department of Mathematics, Iowa State University, Ames, IA 50011, USA. \texttt{\{LHogben, kpalmow,\break myoung\}@iastate.edu.}}
\and Kevin F. Palmowski\footnotemark[2]
\and David E. Roberson\thanks{Division of Mathematical Sciences, Nanyang Technological University, SPMS-MAS-03-01, 21 Nanyang Link, Singapore 637371. \texttt{droberson@ntu.edu.sg.}} 
\and Michael Young\footnotemark[2]
}
\date{\today}
\begin{document}


\maketitle

\begin{abstract} 
An $r$-fold analogue of the positive semidefinite zero forcing process that is carried out on the $r$-blowup of a graph is introduced and used to define the fractional positive semidefinite forcing number. Properties of the graph blowup when colored with a fractional positive semidefinite forcing set are examined and used to define a three-color forcing game that directly computes the fractional positive semidefinite forcing number of a graph. We develop a fractional parameter based on the standard zero forcing process and it is shown that this parameter is exactly the skew zero forcing number with a three-color approach. This approach and an algorithm are used to characterize graphs whose skew zero forcing number equals zero.
\end{abstract}

{\small

\textbf{Key words.} zero forcing, fractional, positive semidefinite, skew, graph

\textbf{Subject classifications.} 05C72, 05C50, 05C57, 05C85
}


\section{Introduction} \label{HPRY-sec-intro-main}

This paper studies fractional versions (in the spirit of \cite{HPRY-fgt}) of the standard and positive semidefinite zero forcing numbers and introduces three-color forcing games to compute these parameters.  The three-color approach allows simpler proofs of some results and yields new results about existing parameters (see, e.g., Section \ref{HPRY-sec-leaf-stripping}).

The zero forcing process was introduced independently in \cite{HPRY-AIM08} as a method of forcing zeros in a null vector of a symmetric matrix described by a graph, which yields an upper bound to the nullity of the matrix, and in \cite{HPRY-BG07} for control of quantum systems. There are potential applications to the spread of rumors or diseases (see, e.g., \cite{HPRY-ZFQC}); one of the original names of zero forcing was ``graph infection." Despite the fact that when studied as a graph parameter there are no zeros involved, the name ``zero forcing number" has become the standard term in the literature. The original zero forcing number has since spawned numerous variants (see, e.g., \cite{HPRY-BBFHHSvdDvdH10, HPRY-param, HPRY-IMAISU10-skew}). The speed with which the zero forcing process colors all vertices has also been studied (see, e.g., \cite{HPRY-proptime, HPRY-nathanThesis}).

\subsection{Zero forcing games} \label{HPRY-sec-intro-ZF}

In this section, we introduce zero forcing, which can be described as a coloring game \cite{HPRY-param}, and the terminology used. Abstractly, a \defbf{forcing game} is a type of coloring game that is played on a simple graph $G$. First, a ``target color," typically blue or dark blue, is designated. Each vertex of the graph is then colored the target color, white, or possibly some other color (in prior work, only white and the target color have been used). A \defbf{forcing rule} is chosen: this is a rule that describes the conditions under which some vertex can cause another vertex to change to the target color. If vertex $u$ causes a neighboring vertex $w$ to change color, we say that $u$ \defbf{forces} $w$ and write $u \force w$. The forcing rule is repeatedly applied until no more forces can be performed, at which point the game ends; the coloring at the end is called the \defbf{final coloring}. An ordered list of the forces performed is referred to as a \defbf{chronological list of forces}. Note that there is usually some choice as to which forces are performed, as well as the order in which these forces occur. As such, a single forcing set may generate many different chronological lists of forces; however, the final coloring is unique for all of the games discussed herein. If the graph is totally colored with the target color at the end of the game, then we say that $G$ has been \defbf{forced}. The goal of the game is to force $G$. If this is possible, then the initial set of non-white vertices is called a \defbf{forcing set}.

The \defbf{(standard) zero forcing game} uses only the colors blue (the target color) and white. The \defbf{(standard) zero forcing rule} is as follows:
\begin{quote}
\begin{center}
If $w$ is the only white neighbor of a blue vertex $u$, then $u$ can force $w$.
\end{center}
\end{quote}
A \defbf{(standard) zero forcing set} is an initial set of blue vertices that can force $G$ using this rule. The \defbf{(standard) zero forcing number} of $G$, denoted $Z(G)$, is the minimum cardinality of a zero forcing set for $G$. We present an illustrative example in Figure \ref{HPRY-fig-intro-ex-zf}.

\begin{figure}[h]
\begin{center}
	\begin{subfigure}[b]{0.22\textwidth}
	\begin{center}
		\includegraphics[scale=0.8]{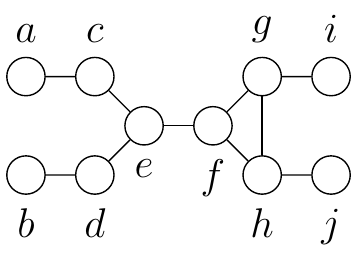} \qquad 
		\caption{Graph $G$}
		\label{HPRY-fig-intro-ex-G}
	\end{center}
	\end{subfigure}
	\quad
	\begin{subfigure}[b]{0.22\textwidth}
	\begin{center}
		\includegraphics[scale=0.8]{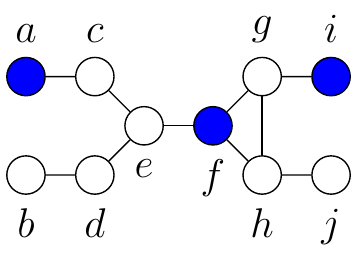} \qquad 
		\caption{Initial forcing set}
		\label{HPRY-fig-intro-ex-zf-a}
	\end{center}
	\end{subfigure}
	\quad
	\begin{subfigure}[b]{0.22\textwidth}
	\begin{center}
		\includegraphics[scale=0.8]{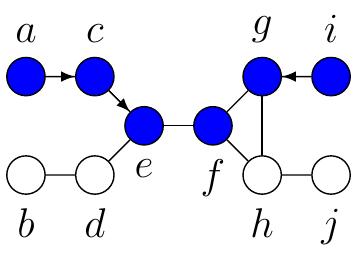} \qquad 
		\caption{First three forces}
		\label{HPRY-fig-intro-ex-zf-b}
	\end{center}
	\end{subfigure}
	\quad
	\begin{subfigure}[b]{0.22\textwidth}
	\begin{center}
		\includegraphics[scale=0.8]{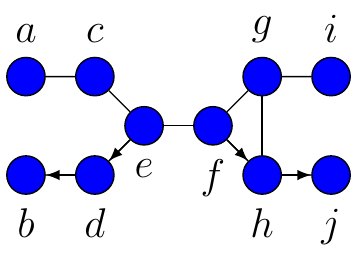}
		\caption{Final forces}
		\label{HPRY-fig-intro-ex-zf-c}
	\end{center}
	\end{subfigure}
\caption{Standard zero forcing game example}
\label{HPRY-fig-intro-ex-zf}
\end{center}
\end{figure}

From this point forward, we will omit the word ``standard" when referring to the standard zero forcing game, its forcing rule, or zero forcing sets whenever there is no risk of ambiguity.

The \defbf{positive semidefinite zero forcing game} is a modification of the zero forcing game used to force zeros in a null vector of a positive semidefinite matrix described by a graph \cite{HPRY-BBFHHSvdDvdH10}. Like the zero forcing game, positive semidefinite zero forcing uses only the colors blue (target) and white. The \defbf{positive semidefinite zero forcing rule} is the same as the standard zero forcing rule, except that this rule also features a \defbf{disconnect rule}:
\begin{quote}
\begin{center}
Remove all blue vertices from the graph, leaving a set of connected components. To each connected component (of white vertices) in turn, add the blue vertices, the edges among the blue vertices, and any edges between the blue vertices and that component, and perform forces via the standard rule: If $w$ is the only white neighbor of a blue vertex $u$ in this induced subgraph, then $u$ can force $w$.
\end{center}
\end{quote}
It is not assumed that disconnection occurs; if there is only one component, then we simply force via the standard forcing rule. If disconnection does occur, then after the force the graph is ``reassembled" prior to applying the rule again. As one would expect, a \defbf{positive semidefinite zero forcing set} is an initial set of blue vertices that can force $G$ using this rule, and the \defbf{positive semidefinite zero forcing number} of $G$, denoted $Z^+(G)$, is the minimum cardinality of a positive semidefinite zero forcing set for $G$. In Figure \ref{HPRY-fig-intro-ex-zfp} we illustrate the positive semidefinite zero forcing process on the graph from Figure \ref{HPRY-fig-intro-ex-G}.

\begin{figure}[h]
\begin{center}
	\begin{subfigure}[b]{0.3\textwidth}
	\begin{center}
		\includegraphics[scale=0.8]{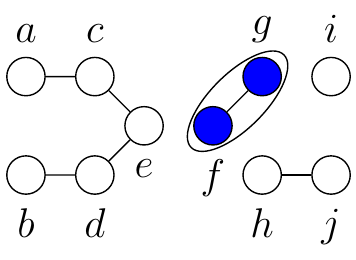} \qquad 
		\caption{Connected components}
		\label{HPRY-fig-intro-ex-zfp-b}
	\end{center}
	\end{subfigure}
	\quad
	\begin{subfigure}[b]{.3\textwidth}
	\begin{center}
		\includegraphics[scale=0.75]{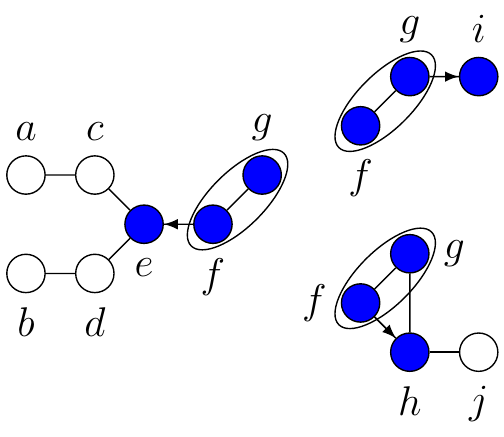} \qquad 
		\caption{Forcing in each component}
		\label{HPRY-fig-intro-ex-zfp-c}
	\end{center}
	\end{subfigure}
	\quad
	\begin{subfigure}[b]{0.3\textwidth}
	\begin{center}
		\includegraphics[scale=0.8]{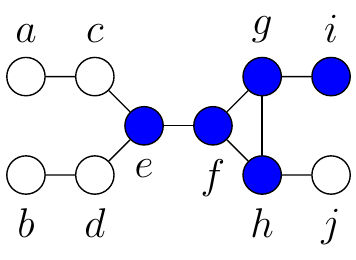} \qquad 
		\caption{Reassembled graph}
		\label{HPRY-fig-intro-ex-zfp-d}
	\end{center}
	\end{subfigure}
\caption{Positive semidefinite zero forcing game example (first steps)}
\label{HPRY-fig-intro-ex-zfp}
\end{center}
\end{figure}

The \defbf{skew zero forcing game}, another variant on zero forcing that uses the colors white and blue (target), was first considered in \cite{HPRY-IMAISU10-skew} to force zeros in a null vector of a skew symmetric matrix described by a graph. The \defbf{skew zero forcing rule} is as follows: 
\begin{quote}
\begin{center}
If $w$ is the only white neighbor of any vertex $u$, then $u$ can force $w$.
\end{center}
\end{quote}
Skew zero forcing removes the standard requirement that the forcing vertex $u$ be blue; as a result, skew zero forcing allows \defbf{white vertex forcing}, i.e., a white vertex is allowed to force its only white neighbor. A \defbf{skew zero forcing set} is an initial set of blue vertices that can force $G$ using this rule, and the \defbf{skew zero forcing number} of $G$, denoted $Z^-(G)$, is the minimum cardinality of a skew zero forcing set for $G$. Figure \ref{HPRY-fig-intro-ex-zfm} demonstrates skew zero forcing; notice that the initial forcing set contains no blue vertices. 

\begin{figure}[h]
\begin{center}
	\begin{subfigure}[b]{0.3\textwidth}
	\begin{center}
		\includegraphics[scale=0.8]{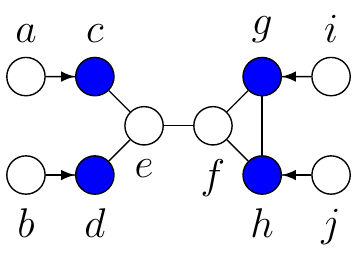} \qquad 
		\caption{First four forces}
		\label{HPRY-fig-intro-ex-zfm-a}
	\end{center}
	\end{subfigure}
	\quad
	\begin{subfigure}[b]{0.3\textwidth}
	\begin{center}
		\includegraphics[scale=0.8]{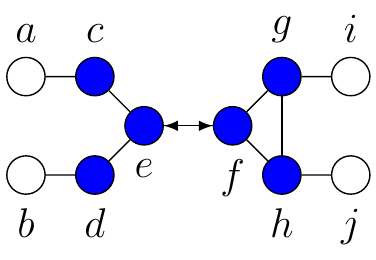} \qquad 
		\caption{Fifth and sixth forces}
		\label{HPRY-fig-intro-ex-zfm-b}
	\end{center}
	\end{subfigure}
	\quad
	\begin{subfigure}[b]{0.3\textwidth}
	\begin{center}
		\includegraphics[scale=0.8]{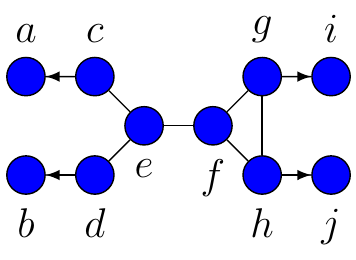}
		\caption{Final forces}
		\label{HPRY-fig-intro-ex-zfm-c}
	\end{center}
	\end{subfigure}
\caption{Skew zero forcing game example}
\label{HPRY-fig-intro-ex-zfm}
\end{center}
\end{figure}

\subsection{Motivation and method}

This paper focuses on fractional versions of the standard and positive semidefinite zero forcing numbers. We first present the construction of fractional chromatic number found in \cite{HPRY-fgt} as an example of the method used to define a fractional graph parameter. A \defbf{proper coloring} of a graph $G$ is an assignment of colors to the vertices of $G$ such that adjacent vertices receive different colors. The \defbf{chromatic number} of $G$, denoted $\chi(G)$, is the least number of colors required to properly color $G$. We can generalize a proper coloring of $G$ using $c$ colors to a \defbf{proper $r$-fold coloring with $c$ colors}, or a $c$:$r$-coloring: from a total of $c$ colors, we assign $r$ colors to each vertex of $G$ such that adjacent vertices receive disjoint sets of colors. The \defbf{$r$-fold chromatic number} of $G$, denoted $\chi_r(G)$, is the smallest value of $c$ such that G has a $c$:$r$-coloring; we emphasize that to compute $\chi_r(G)$ we fix $r$ and minimize the value of $c$. The \defbf{fractional chromatic number} of $G$ is then defined as
\[ \chi_f(G) = \inf_{r \in \mathbb{N}} \left\{ \frac{\chi_r(G)}{r} \right\} . \]
The interested reader is referred to \cite{HPRY-fgt} for an in-depth treatment of fractional chromatic number, as well as other fractional graph parameters. For this paper, defining an $r$-fold version of a graph parameter and then defining the fractional parameter as the infimum of the ratios of the $r$-fold parameter to $r$ are key ideas.

Suppose that $G$ is a simple graph on $n$ vertices with $V(G) = [n]$. We say that a symmetric matrix $A \in \mathbb{C}^{nr \times nr}$ \defbf{$r$-fits} $G$ if, after partitioning $A$ as a block $n \times n$ matrix, block $A_{ii} = I_r$ for each $i$ and for all $i, j$ with $i \neq j$, block $A_{ij} = 0_{r \times r}$ if and only if $ij \notin E(G)$ \cite{HPRY-HPRS15}. While there may be many such matrices for a given graph, the following result shows that certain structure can be chosen.

\begin{Proposition} \label{HPRY-prop-rfits-structure}
Suppose that $A \in \mathbb{C}^{nr \times nr}$ $r$-fits a graph $G$ on $n$ vertices. We can construct a unitary matrix $U$ such that $U^* A U$ $r$-fits $G$ and if $ij \in E(G)$, then every entry of block $\left(U^* A U \right)_{ij}$ is nonzero.
\end{Proposition}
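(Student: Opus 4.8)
The plan is to take $U$ to be \emph{block diagonal}, $U = \diag(U_1, \dots, U_n)$ with each $U_i \in \mathbb{C}^{r \times r}$ unitary; then $U$ is itself unitary and $(U^* A U)_{ij} = U_i^* A_{ij} U_j$ for all $i,j$. First I would observe that, regardless of how the blocks $U_i$ are chosen, the conjugated matrix again $r$-fits $G$: each diagonal block is $U_i^* I_r U_i = I_r$, and for $i \neq j$ the block $U_i^* A_{ij} U_j$ vanishes if and only if $A_{ij}$ does (as $U_i, U_j$ are invertible), so the adjacency pattern among the blocks is preserved; the (Hermitian) symmetry of $A$ is likewise preserved by unitary conjugation. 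Thus the only real work is to choose the $U_i$ so that, for every edge $ij \in E(G)$, the matrix $U_i^* A_{ij} U_j$ has all $r^2$ entries nonzero.

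I would handle this by a genericity (Baire category) argument on the space $\mathcal{G} = \mathcal{U}(r)^n$ of $n$-tuples of $r \times r$ unitaries, which is a compact, connected, real-analytic manifold. Fix an edge $ij$ and a position $(k,l) \in [r] \times [r]$. The corresponding entry is
\[
(U_i^* A_{ij} U_j)_{kl} = (U_i e_k)^* A_{ij} (U_j e_l),
\]
a function of $(U_1, \dots, U_n)$ whose real and imaginary parts are real-analytic and which factors through $(U_i, U_j)$. Since $ij \in E(G)$, the block $A_{ij}$ is a nonzero matrix, so it has a nonzero entry, say $(A_{ij})_{pq}$; choosing $U_i$ and $U_j$ to be permutation matrices carrying $e_k \mapsto e_p$ and $e_l \mapsto e_q$ makes this entry equal to $(A_{ij})_{pq} \neq 0$. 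Hence the function is not identically zero on the connected manifold $\mathcal{G}$, and by the identity theorem for real-analytic functions its zero set $B_{ij,k,l}$ is a closed, nowhere-dense subset of $\mathcal{G}$.

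Finally I would note that the set of ``bad'' tuples is $\bigcup B_{ij,k,l}$, a \emph{finite} union (over the edges $ij$ and the $r^2$ positions $(k,l)$) of closed nowhere-dense sets. By the Baire category theorem this union has empty interior, so its complement in $\mathcal{G}$ is dense, and in particular nonempty. Any tuple $(U_1, \dots, U_n)$ in the complement yields a block-diagonal unitary $U$ for which every edge block $U_i^* A_{ij} U_j$ is entrywise nonzero, which together with the first paragraph completes the construction.

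The step I expect to demand the most care is the passage from ``each individual entry can be made nonzero'' to ``all entries of all edge blocks are simultaneously nonzero''; this is exactly what the analytic/Baire argument buys us, with the pointwise non-vanishing reduced to the trivial observation that $A_{ij} \neq 0$ for edges. A purely constructive alternative (exhibiting explicit $U_i$ and checking the products by hand) looks substantially more tedious, so I would favor the genericity approach. One should also confirm that the symmetry constraint built into ``$r$-fits'' is retained, which is immediate here because $U$ is unitary and block diagonal.
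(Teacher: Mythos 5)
Your proposal is correct, and its skeleton coincides with the paper's: conjugate by a block-diagonal unitary $U = \diag(U_1,\dots,U_n)$, note that the diagonal blocks and the zero blocks are unaffected, and then argue that a \emph{generic} choice of the $U_i$ makes every edge block $U_i^* A_{ij} U_j$ entrywise nonzero. Where you genuinely diverge is in how genericity is formalized. The paper takes the $U_i$ to be independent random (implicitly Haar-distributed) unitaries and argues that ``with high probability'' no column of $A_{ij}U_j$ is zero and no inner product $(U_i^*\bv{z})_k$ vanishes; this is short and intuitive, but it leaves the measure-theoretic content (why each bad event is null, and why finitely many null events can be excluded simultaneously) implicit. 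You instead work on the compact connected real-analytic manifold $\mathcal{U}(r)^n$, note that each entry of each edge block is real-analytic in the tuple, exhibit an explicit witness (permutation matrices steering $e_k \mapsto e_p$, $e_l \mapsto e_q$ onto a nonzero entry of $A_{ij}$) showing the entry is not identically zero, and conclude via the identity theorem plus Baire category that the bad locus---a finite union of closed nowhere-dense sets---has dense complement. Your permutation-matrix witness is exactly the ingredient that replaces the paper's appeal to randomness, and your version is fully rigorous at the cost of slightly heavier machinery (indeed, for a \emph{finite} union of closed nowhere-dense sets you do not even need Baire). One sentence worth adding: your entry functions are complex-valued, and the identity theorem applies to real-valued real-analytic functions, so one should apply it to the real or imaginary part that is not identically zero (or to the squared modulus), and then note that the zero set of the complex function is contained in that part's zero set; with that patch the argument is complete. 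Your explicit remark that nonzero blocks remain nonzero because $U_i, U_j$ are invertible, and that Hermitian symmetry is preserved under unitary conjugation, makes the ``$U^*AU$ still $r$-fits $G$'' claim cleaner than in the paper, which leaves both points tacit.
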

\begin{proof}
Assume that $V(G) = [n]$ and partition $A = [ A_{ij} ]$ as an $n \times n$ block matrix with $A_{ij} \in \mathbb{C}^{r \times r}$. By definition, we have $A_{ii} = I_r$ for each $i \in [n]$, and for $i,j \in [n]$ with $i \neq j$ we have $A_{ij} = 0_{r \times r}$ if and only if $ij \notin E(G)$.

For each $i \in [n]$, let $U_i \in \mathbb{C}^{r \times r}$ be a random unitary matrix with $U_i$ and $U_j$ chosen independently if $i \neq j$. Define $U = \operatorname{blockdiag}(U_1, \ldots, U_n)$ and let $C = U^* A U$. Partitioning $C$ conformally with $A$, we have $C_{ij} = U_i^* A_{ij} U_j$. Notice that $C_{ii} = U_i^* I_r U_i = I_r$ and if $ij \notin E(G)$ (for $i \neq j$), then $C_{ij} = U_i^* 0_{r \times r} U_j = 0_{r \times r}$.

Suppose $ij \in E(G)$ and consider the product $A_{ij} U_j$. Since $U_j$ is random, with high probability no column of $U_j$ lies in $\ker A_{ij}$, so no column of $A_{ij} U_j$ is a zero vector. Let $\bv{z}$ be any column of $A_{ij} U_j$ (so with high probability $\bv{z} \neq \bv{0}$) and consider $(U_i^* \bv{z})_k$. If $(U_i^* \bv{z})_k = 0$, then $\bv{z}$ is orthogonal to the $k^{th}$ column of $U_i$. Since $U_i$ is a random unitary matrix, with high probability this does not happen. We conclude that if $ij \in E(G)$, then with high probability no entry of $C_{ij}$ is zero. Thus there exists a matrix that $r$-fits $G$ and has the desired structure.
\end{proof}

Let $G$ be a graph and choose $r \in \mathbb{N}$. The \defbf{$r$-blowup} of $G$ is the graph $\blowup{G}{r}$ constructed by replacing each vertex of $u \in V(G)$ with an independent set of $r$ vertices, denoted $R_u$, and replacing each edge $uw \in E(G)$ by the edges of a complete bipartite graph on partite sets $R_u$ and $R_w$.%
\footnote{Given graphs $G$ and $H$, the \defbf{lexicographic product} of $G$ with $H$, denoted $G \times_L H$, is the graph with $V(G \times_L H) = V(G) \times V(H)$ and $(g,h)(i,j) \in E(G \times_L H)$ if $gi \in E(G)$ or if $g = i$ and $hj \in E(H)$. We can also define the $r$-blowup of $G$ as $\blowup{G}{r} = G \times_L \overline{K_r}$, where $\overline{K_r}$ denotes the empty graph on $r$ vertices.}
We call the set $R_u$ a \defbf{cluster}. Note that $V(\blowup{G}{r}) = \bigcup_{u \in V(G)} R_u$ and if $uw \in E(G)$ then every vertex of $R_u$ is adjacent to every vertex of $R_w$ in $\blowup{G}{r}$.

Suppose that $A \in \mathbb{C}^{nr \times nr}$ is positive semidefinite and $r$-fits a graph $G$ on $n$ vertices with $V(G) = [n]$. As a result of Proposition \ref{HPRY-prop-rfits-structure} (by replacing $A$ with $U^*AU$), we can assume that if $ij \in E(G)$, then block $A_{ij}$ has no zero entries. Consider the graph of such a matrix $A$, namely, the simple graph with vertex set $[nr]$ and with an edge between vertices $k$ and $\ell$ if $k \neq \ell$ and the entry in row $k$ and column $\ell$ of $A$ is nonzero. Since $A_{ii} = I_r$, the vertices of $G$ will map to independent sets (clusters) of size $r$; let $R_i$ denote the cluster associated with vertex $i \in V(G)$. Since each entry of $A_{ij}$ is nonzero, every vertex in $R_i$ will be adjacent to every vertex in $R_j$, and vice versa. Hence the graph of $A$ is exactly $\blowup{G}{r}$, the $r$-blowup of $G$.

The positive semidefinite zero forcing number of a graph is an upper bound on the maximum positive semidefinite nullity of the graph, which equals the order of the graph minus its minimum positive semidefinite rank \cite{HPRY-BBFHHSvdDvdH10, HPRY-HLA2ch46}. The authors of \cite{HPRY-HPRS15} define an $r$-fold analogue of minimum positive semidefinite rank and use this new parameter to define fractional minimum positive semidefinite rank. A key element of this treatment is that the $r$-fold minimum positive semidefinite rank of a graph can be expressed as the rank of a positive semidefinite matrix that $r$-fits the graph \cite[Theorem 3.9]{HPRY-HPRS15}. Our previous discussion allows us to assume that the graph of such a matrix is $\blowup{G}{r}$.

As mentioned in Section \ref{HPRY-sec-intro-ZF}, playing the positive semidefinite zero forcing game can be interpreted as forcing zeros in a null vector of a positive semidefinite matrix whose graph is $G$, hence the connection to maximum positive semidefinite nullity and minimum positive semidefinite rank. Since the $r$-fold minimum positive semidefinite rank is defined in terms of matrices that $r$-fit the original graph, an $r$-fold analogue of positive semidefinite zero forcing number would naturally be associated with a game played on the graph of a positive semidefinite matrix that $r$-fits $G$. To this end, our $r$-fold forcing parameters will be defined in terms of forcing games played on $\blowup{G}{r}$.

\subsection{Definitions and notation} \label{HPRY-sec-intro-defs}

Throughout this paper, all graphs are simple. We use $|G|$ to denote the order of a graph $G$, i.e., $|G| = |V(G)|$. If $G$ is a graph and $S \subseteq V(G)$, then $G[S]$ denotes the subgraph of $G$ \defbf{induced} by $S$, namely, the graph with $V(G[S]) = S$ and $E(G[S]) = \{ uv \in E(G) : u, v \in S\}$. We use $G - S$ as shorthand for the induced subgraph $G[V(G) \setminus S]$. The \defbf{neighborhood} of a vertex $u \in V(G)$, denoted $N(u)$, is the set of vertices adjacent to $u$. The \defbf{degree} of a vertex $u$, $\operatorname{deg}(u)$, is the number of neighbors of $u$, i.e., $|N(u)|$. A \defbf{leaf} is a vertex of degree one. We use $\delta(G)$ to denote the minimum of the degrees of the vertices of $G$. We write $S \dcup T$ to denote the union of disjoint sets $S$ and $T$.

Throughout, $B$ will be used to denote a set of blue vertices associated with a two-color forcing game. We emphasize that in a two-color forcing game the target color is blue. For three-color forcing games, we use two non-white colors: dark blue, which is our target color, and light blue. A set of colored vertices associated with a three-color forcing game with be denoted by $\mathcal{B}$. Given such a set $\mathcal{B}$, we let $\mathcal{D}$ be the set of dark blue vertices and $\mathcal{L}$ be the set of light blue vertices. Since $\mathcal{D} \cap \mathcal{L} = \emptyset$, we have $\mathcal{B} = \mathcal{D} \dcup \mathcal{L}$. While $\mathcal{B}$ is a set, we will abuse notation and write $\mathcal{B} = (\mathcal{D}, \mathcal{L})$ to emphasize the decomposition of $\mathcal{B}$ into its component sets.

\subsection{Contribution and organization of the paper}

In Section \ref{HPRY-sec-psd-zf} we introduce and examine the fractional positive semidefinite forcing number of a graph. An $r$-fold extension of the positive semidefinite zero forcing number, based on graph blowups, is introduced and used to define the fractional positive semidefinite forcing number of a graph $G$, denoted $Z_f^+(G)$. We also introduce a three-color forcing game played on $G$ called the fractional positive semidefinite forcing game and prove a main result of that section:
\begin{introThm}[Theorem \ref{HPRY-thm-zfp-equals-zhat}]
For any graph $G$, $Z_f^+(G)$ is the minimum number of dark blue vertices in a (three-color) fractional positive semidefinite forcing set for $G$.
\end{introThm}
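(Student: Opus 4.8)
The plan is to prove the equality by sandwiching. Write $\hat{Z}^+(G)$ for the minimum number of dark blue vertices over all three-color fractional positive semidefinite forcing sets $\mathcal{B} = (\mathcal{D}, \mathcal{L})$ of $G$, and recall that $Z_f^+(G) = \inf_{r \in \mathbb{N}} Z^+_{[r]}(G)/r$, where $Z^+_{[r]}(G)$ is the minimum size of a positive semidefinite zero forcing set of the blowup $\blowup{G}{r}$. I would establish $Z_f^+(G) \le \hat{Z}^+(G)$ and $\hat{Z}^+(G) \le Z_f^+(G)$ separately, each by translating between a forcing process on $G$ (three colors) and one on $\blowup{G}{r}$ (two colors). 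The bridge in both directions is a dictionary between colors and clusters: a dark blue vertex $u$ corresponds to a fully blue cluster $R_u$, a light blue vertex to a cluster that is partly blue and partly white, and a white vertex to a fully white cluster.

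For $Z_f^+(G) \le \hat{Z}^+(G)$ (the lifting direction), I would start from an optimal three-color set $\mathcal{B} = (\mathcal{D}, \mathcal{L})$ with $|\mathcal{D}| = \hat{Z}^+(G)$ and, for each $r$, build a blowup coloring $B_r$ of $\blowup{G}{r}$ by coloring every vertex of $R_u$ blue when $u \in \mathcal{D}$, coloring a single (or a fixed bounded number of) vertex of $R_u$ blue when $u \in \mathcal{L}$, and leaving $R_u$ white when $u$ is white. The core step is to replay the three-color chronological list of forces as a legal positive semidefinite chronological list of forces on $\blowup{G}{r}$, cluster by cluster, so that $B_r$ forces $\blowup{G}{r}$; this yields $Z^+_{[r]}(G) \le r\,|\mathcal{D}| + |\mathcal{L}|$. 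Dividing by $r$ and letting $r \to \infty$ (equivalently, taking the infimum) kills the $|\mathcal{L}|/r$ term and gives $Z_f^+(G) \le |\mathcal{D}| = \hat{Z}^+(G)$.

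For $\hat{Z}^+(G) \le Z_f^+(G)$ (the projection direction), I would fix $r$ and an optimal positive semidefinite forcing set $B$ of $\blowup{G}{r}$ with $|B| = Z^+_{[r]}(G)$, then project it to a three-color coloring $\mathcal{B}$ of $G$ via the dictionary above: $u$ is dark blue if $R_u \subseteq B$, light blue if $\emptyset \neq R_u \cap B \subsetneq R_u$, and white otherwise. Replaying the forces of $B$ on $\blowup{G}{r}$ as three-color forces on $G$ shows that $\mathcal{B}$ is a valid three-color forcing set, so $\hat{Z}^+(G) \le |\mathcal{D}|$. Since each dark blue vertex accounts for a full cluster of $r$ vertices of $B$, we also have $|B| \ge r\,|\mathcal{D}|$, hence $\hat{Z}^+(G) \le |\mathcal{D}| \le Z^+_{[r]}(G)/r$; taking the infimum over $r$ gives $\hat{Z}^+(G) \le Z_f^+(G)$.

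The hard part will be the two simulation arguments that justify replaying one forcing process as the other, compatibly with the disconnect rule of the positive semidefinite game. Going from $G$ to $\blowup{G}{r}$, I must check that a single three-color force $u \force w$ can be realized by a sequence of legal positive semidefinite forces inside the blowup using only the allotted blue vertices; the delicate point is that a light blue cluster carrying just one blue vertex must retain enough forcing power once its neighbors are appropriately colored, and that deleting the blue vertices of the blowup splits it into components matching those seen by the three-color rule. Going the other way, I must verify that a partially blue cluster arising during the process on $B$ never enables a blowup force with no counterpart among the permitted three-color forces. The essential subtlety common to both is that the forcing behavior of a cluster must depend only on its type (fully blue, partly blue, or white) and not on the exact number of blue vertices it contains, so that a single blue vertex suffices for lifting while $r-1$ blue vertices do not exceed the power of light blue in the projection. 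I expect these correspondences to follow from the cluster-level properties of blowup colorings established earlier in the section, after which the counting and the limiting argument (the infimum removing the light blue contribution) are routine.
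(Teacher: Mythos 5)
Your lifting direction ($Z_f^+(G) \le \hat{Z}^+(G)$) is sound and is essentially the paper's second inequality: you convert an optimal three-color set $(\mathcal{D},\mathcal{L})$ into a blowup coloring with All clusters for $\mathcal{D}$, One clusters for $\mathcal{L}$, and None clusters for white vertices, replay the three-color process, and let $r \to \infty$ to kill the $|\mathcal{L}|/r$ term; your limiting argument even avoids the paper's appeal to Lemma \ref{HPRY-lem-AON-num-alls-large-r-P} and the $\astar^+$ machinery. The genuine gap is in your projection direction, and the principle you plan to lean on there --- that ``the forcing behavior of a cluster must depend only on its type (fully blue, partly blue, or white) and not on the exact number of blue vertices it contains'' --- is false. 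Forcing \emph{into} a cluster gets cheaper the fuller it is: a cluster with $r-1$ blue vertices consumes only one of the forcing vertex's $r$ white-neighbor slots, while a cluster with one blue vertex consumes $r-1$. Consequently a single blue vertex in $\blowup{G}{r}$ can force several partially filled clusters \emph{simultaneously}, a move with no counterpart in the three-color game, where a forcing vertex must have a \emph{unique} non-dark-blue neighbor in its augmented component.

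Concretely, take $G = K_3$ and $r = 2$ (this is the paper's Example \ref{HPRY-ex-2fold-bad-replication}, Figure \ref{HPRY-fig-psd-replication-K3-a}): putting one blue vertex in each of the three clusters is a \emph{minimum} $2$-fold PSD forcing set, $\zr{2}^+(K_3) = 3$, because any one blue vertex has exactly two white neighbors in the single white component and forces both at once. Your projection sends this set to $\mathcal{D} = \emptyset$, $\mathcal{L} = V(K_3)$, which is not a three-color fractional PSD forcing set (every light blue vertex has two light blue neighbors, so no force is ever possible), and the bound it would produce, $\hat{Z}^+(K_3) \le \zr{2}^+(K_3)/2$ with $|\mathcal{D}| = 0$, i.e.\ $\hat{Z}^+(K_3) \le 0$, is false since $Z_f^+(K_3) = 1$. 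What is missing is exactly the content of the paper's Theorem \ref{HPRY-thm-AON-P}: a consolidation argument showing that some minimum $r$-fold PSD forcing set is AON \emph{and} admits a forcing process in which exactly one cluster is forced per step; only for such sets does the projection to three colors succeed. That consolidation argument (moving blue vertices among simultaneously forced clusters without disturbing earlier or later forces) is the real work of the paper's proof, and your proposal defers it to ``cluster-level properties'' which, in the form you state them, do not hold.
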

This result allows us to determine the fractional positive semidefinite forcing number of a graph by playing the fractional positive semidefinite forcing game, as opposed to computation via the $r$-fold approach. We prove numerous results pertaining to fractional positive semidefinite forcing number and the structure of optimal fractional positive semidefinite forcing sets and apply these results to compute the fractional positive semidefinite forcing number for some common graph families. We also prove that any graph has an ordinary (two-color) minimum positive semidefinite zero forcing set such that the first force in the forcing process can be done without using the disconnect rule.

In Section \ref{HPRY-sec-skew} we introduce a three-color forcing game that is equivalent to the skew zero forcing game. The three-color approach is used to prove numerous results pertaining to skew zero forcing. We define an $r$-fold analogue of the (standard) zero forcing game and using this to define the fractional forcing number of a graph, denoted $Z_f(G)$. A main result of that section shows that skew zero forcing number and fractional zero forcing number of a graph are the same:
\begin{introThm}[Theorem \ref{HPRY-thm-zf-equals-skew}]
For any graph $G$, $Z_f(G) = Z^-(G)$.
\end{introThm}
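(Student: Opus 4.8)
The plan is to route the proof through the three-color skew forcing game, exactly as the positive semidefinite case is handled by Theorem~\ref{HPRY-thm-zfp-equals-zhat}. Writing $\hat{Z}(G)$ for the minimum number of dark blue vertices over all three-color skew forcing sets $\mathcal{B} = (\mathcal{D}, \mathcal{L})$ of $G$, I would prove the two equalities
\[
Z_f(G) = \hat{Z}(G) = Z^-(G),
\]
where the right-hand equality is the equivalence between the three-color skew game and the ordinary skew game established earlier in this section, and the left-hand equality is the blowup-to-three-color correspondence that is the real content here. The skew equivalence does the job of showing that dark blue vertices play the role of the initial blue vertices of a skew forcing set, while light blue vertices encode the white-vertex forces that the skew rule permits; granting it, everything reduces to proving $Z_f(G) = \hat{Z}(G)$.

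For $Z_f(G) = \hat{Z}(G)$ I would argue two inequalities, mirroring the structure of Theorem~\ref{HPRY-thm-zfp-equals-zhat}. For $Z_f(G) \le \hat{Z}(G)$, start from an optimal three-color skew forcing set $(\mathcal{D}, \mathcal{L})$ and, for each $r$, build a forcing set on the blowup $\blowup{G}{r}$ by coloring every cluster $R_u$ with $u \in \mathcal{D}$ entirely blue and placing only $o(r)$ blue vertices in each cluster $R_u$ with $u \in \mathcal{L}$; the light blue clusters are precisely the ones whose seed cost is sublinear, so the total number of blue vertices is $|\mathcal{D}|\,r + o(r)$ and the ratio tends to $|\mathcal{D}| = \hat{Z}(G)$. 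I would then translate each three-color force $u \force w$ on $G$ into a block of forces on $\blowup{G}{r}$, using the cluster structure so that the blue vertices of $R_u$ sweep out $R_w$. For $Z_f(G) \ge \hat{Z}(G)$, take forcing sets on the blowups witnessing the infimum and first symmetrize them: by averaging over permutations of the $r$ copies within each cluster (passing to a further blowup if necessary), I may assume each cluster is in one of a few canonical states, which I then read off as a color—full clusters as dark blue, the distinguished partial clusters as light blue, and empty clusters as white. The forcing process on $\blowup{G}{r}$ projects to a legal sequence of three-color skew forces on $G$, so the number of dark blue clusters, bounded by the per-cluster cost divided by $r$, bounds $\hat{Z}(G)$ below by $Z_f(G)$.

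The hard part will be the two normalization steps and, above all, the faithful simulation of skew white-vertex forcing inside a process that is nominally the \emph{standard} (blue-forces-white) rule on the blowup. This is exactly the phenomenon that makes the standard fractional parameter agree with the \emph{skew} rather than the standard forcing number: a cluster that is globally white in $G$ can nonetheless carry out a force in $\blowup{G}{r}$ once its neighboring clusters have been colored, and capturing which partial colorings make this possible at $o(r)$ cost is the crux. I expect the decisive lemmas to be (i) a symmetrization result guaranteeing that optimal blowup forcing sets may be taken cluster-uniform, so that the limit defining $Z_f$ is attained along a single canonical family, and (ii) a force-by-force correspondence showing that the three-color rules are neither too weak (every blowup force lifts a three-color force) nor too strong (every three-color force is realized on some blowup). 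Establishing (ii) in the light blue case—where the sublinear initial seeds must propagate correctly through the neighboring clusters—is where I anticipate the most delicate bookkeeping.
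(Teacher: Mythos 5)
Your overall architecture (route through the three-color skew game, convert colorings back and forth between $G$ and $\blowup{G}{r}$) matches the paper's, and your upper-bound direction is essentially the paper's Remark \ref{HPRY-rmk-conversion} and Proposition \ref{HPRY-prop-aplusm-equals-zm}: one blue vertex per light blue cluster (not just ``$o(r)$'') suffices, giving $\zr{r}(G) \leq r\,Z^-(G) + \ellstar^-(G)$ and hence $Z_f(G) \leq Z^-(G)$. The genuine gap is in your lower bound. Your normalization-and-projection step assumes each cluster of an optimal blowup forcing set can be put into one of three states --- full (dark blue), ``distinguished partial'' (light blue), or empty (white). But the standard $r$-fold game has no disconnect rule and hence no backforcing, and the correct normalization (Theorem \ref{HPRY-thm-AMON-S}, proved by \emph{consolidation}, i.e.\ moving blue vertices between clusters --- not by permutation averaging, which does not produce a single canonical set) yields \emph{four} states: All, \emph{Most} (all but one vertex blue), One, None. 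Most clusters arise because a single blowup vertex may have to force into two clusters simultaneously; shrinking a Most cluster to a One cluster gives that vertex more than $r$ white neighbors and destroys the force, so the possibility of Most clusters cannot be normalized away and any proof must handle them.

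Most clusters break your projection in exactly the way you feared but did not resolve. A Most cluster costs $r-1 \sim r$ blue vertices, so it must be read off as \emph{dark} blue, not light blue: under your coloring, the blowup step in which some $x$ simultaneously forces a Most cluster and a One cluster would project to a skew force of two light blue vertices at once, which the skew rule forbids, so the projected sequence is not a legal three-color forcing process and you cannot conclude $\hat{Z}(G) \le a(B)$. With the correct reading (All and Most dark blue, One light blue, as in Remark \ref{HPRY-rmk-conversion}) the projection is legal --- the Most vertex is already dark blue, so only the One vertex is forced --- but then the count you need is $a(B) + m(B) \le |B|/r$, and since $|B|/r = a(B) + m(B) + \bigl(\ell(B) - m(B)\bigr)/r$, this requires the pairing inequality $\ell(B) \geq m(B)$ (Corollary \ref{HPRY-cor-ell-geq-m-S}: each Most cluster is forced simultaneously with its own One cluster, and these One clusters are distinct). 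These two missing ingredients --- the four-state AMON normalization and the inequality $\ell(B) \geq m(B)$ --- are precisely the content of Theorem \ref{HPRY-thm-AMON-S} and Proposition \ref{HPRY-prop-BoverR-geq-zm}, and without them your lower bound does not go through.
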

We conclude the section by introducing an algorithm that is used to characterize graphs that satisfy $Z^-(G) = 0$.


\section{Fractional positive semidefinite forcing} \label{HPRY-sec-psd-zf}

In this section, we introduce the $r$-fold and fractional positive semidefinite forcing numbers of a graph, as well as a three-color forcing game that relates to the fractional parameter.

\subsection{The $r$-fold positive semidefinite forcing game and fractional positive\break semidefinite forcing number} \label{HPRY-sec-rfold-PSD-intro}

Let $G$ be a graph and for $r \in \mathbb{N}$ consider the following \defbf{$r$-fold positive semidefinite forcing game}, which is a two-color forcing game played on $\blowup{G}{r}$. As in any forcing game, we initially color some set $B \subseteq V(\blowup{G}{r})$ blue and then try to force $\blowup{G}{r}$ through repeated application of the following $r$-fold positive semidefinite forcing rule:

\begin{Definition}[$r$-fold positive semidefinite forcing rule]
Let $B_t$ denote the set of blue vertices of $\blowup{G}{r}$ at some step $t$ of the $r$-fold positive semidefinite forcing process%
\footnote{We caution the reader that a chronological list of forces is not a propagating process and $B_t$ here has different meaning than that used in the study of propagation.} 
and let $W_1, \ldots, W_h$ denote the sets of vertices of the connected components of $\blowup{G}{r} - B_t$. If $u \in B_t$ and $|N(u) \cap W_i| \leq r$, then $u$ can force $N(u) \cap W_i$, i.e., all white neighbors of $u$ in $\blowup{G}{r}[B_t \cup W_i]$ can be simultaneously colored blue.
\end{Definition}

The $r$-fold positive semidefinite forcing game can be thought of as a generalization of the positive semidefinite zero forcing game: instead of forcing one white neighbor in a component after applying the disconnect rule, a vertex forces up to $r$ white neighbors in a component. This is a positive semidefinite analogue of the $r$-forcing process described in \cite{HPRY-davila-kforcing}, but we apply this process only to the blowup of the graph.

If $\blowup{G}{r}$ can be forced, then the initial set of blue vertices is called an \defbf{$r$-fold positive semidefinite (PSD) forcing set} for $G$. An $r$-fold PSD forcing set $B$ is \defbf{minimum} if there is no $r$-fold PSD forcing set of smaller cardinality than $B$. The cardinality of a minimum $r$-fold PSD forcing set is called the \defbf{$r$-fold positive semidefinite forcing number} of $G$ and is denoted $\zr{r}^+(G)$. We define the \defbf{fractional positive semidefinite forcing number} of a graph $G$ as
\[ Z_f^+(G) = \inf_{r \in \mathbb{N}} \left\{ \frac{\zr{r}^+(G)}{r} \right\}. \]

Note that $\blowup{G}{1} = G$ and a $1$-fold PSD forcing set is exactly a positive semidefinite zero forcing set, so $\zr{1}^+(G) = Z^+(G)$. Any positive semidefinite zero forcing set $B$ can be converted into an $r$-fold PSD forcing set (for $r \geq 2$) by the following rule: If $u \in B$, then color every vertex in $R_u \in V(\blowup{G}{r})$ blue. This creates an $r$-fold PSD forcing set that contains $r \cdot Z^+(G)$ blue vertices, so $\zr{r}^+(G) \leq r \cdot Z^+(G) = r \cdot \zr{1}^+(G)$. We conclude that $Z_f^+(G) \leq Z^+(G)$ and that
\[ Z_f^+(G) = \inf_{r \geq 2} \left\{ \frac{\zr{r}^+(G)}{r} \right\}. \]

\subsection{Global interpretation of $r$-fold positive semidefinite forcing} \label{HPRY-sec-global-PSD}

In this section, we assume that $r \geq 2$ and utilize the global structure of a graph $r$-blowup, namely, clusters joined by edges. Three specific types of cluster are of particular interest. An \defbf{All cluster} is a cluster in which all vertices are colored blue. A \defbf{One cluster} is a cluster in which exactly one vertex is colored blue and the rest are colored white. A \defbf{None cluster} is a cluster in which all vertices are colored white. We define a \defbf{All-One-None (minimum) $r$-fold positive semidefinite forcing set} $B$ for a graph $G$ to be a (minimum) $r$-fold PSD forcing set in which each cluster of $\blowup{G}{r}$ is either an All, One, or None cluster when $\blowup{G}{r}$ is colored with $B$. For the sake of brevity, we will hereafter shorten All-One-None to AON.

We say that a cluster $R_u$ is \defbf{forced into} when any vertex in $R_u$ is forced. Once a cluster changes from a non-All to an All cluster, we say that the cluster has been \defbf{forced}. Any cluster that is forced into becomes an All cluster after the forcing operation, so forcing into a cluster and forcing the cluster are equivalent.

\begin{Remark} \label{HPRY-rmk-backforce-ok}
At some stage of the $r$-fold positive semidefinite forcing process using a particular chronological list of forces, let $B_t$ denote the set of blue vertices in $\blowup{G}{r}$. Assume that $R_u \not\subseteq B_t$ for some $u \in V(G)$. Suppose that the next force in the process is done by $x \in R_u$, so $x$ has at most $r$ white neighbors. Since $R_u \not\subseteq B_t$, there exists at least one white vertex $w \in R_u$. Because $x$ and $w$ have the same neighbors and $w$ is white, all white neighbors of $x$ are connected through $w$ and lie in the same connected component. Hence, after $x$ forces, all neighbors of every vertex in $R_u$ must be blue, so without loss of generality $R_u$ can be forced in the next step of the forcing process.
\end{Remark}

\begin{Definition}
If at any stage of the $r$-fold positive semidefinite forcing process a vertex in any partially-filled cluster performs a force, then that cluster can itself be forced at the next forcing step. We refer to this process as \defbf{backforcing}.
\end{Definition}

Remark \ref{HPRY-rmk-backforce-ok} asserts that requiring backforcing does not affect whether or not a set is an $r$-fold PSD forcing set, so we will always assume that backforcing is used when performing the $r$-fold positive semidefinite forcing process.

\begin{Definition}
Let $R_{u_1}, R_{u_2}, \ldots, R_{u_m}$ be ``partially-filled" clusters (i.e., no cluster is an All or a None) in $\blowup{G}{r}$ that together contain $pr+q$ blue vertices for some $0 \leq p < m$ and $0 \leq q <r$. We define the process of \defbf{consolidation} as follows: Use $pr$ of the blue vertices to convert $R_{u_1}, \ldots, R_{u_p}$ into All clusters and move the remaining $q$ blue vertices into $R_{u_{p+1}}$.
\end{Definition}

Our goal for the remainder of this section is to use these tools and definitions to develop an equivalent characterization of the $r$-fold positive semidefinite forcing game that relies only upon a particular type of AON $r$-fold PSD forcing set.

\begin{Remark} \label{HPRY-rmk-AON-only-one-force}
Suppose that $r \geq 3$. If $B$ is an AON $r$-fold PSD forcing set, then from a global perspective exactly one cluster in $\blowup{G}{r}$ is forced at each step of the forcing process. This is because the vertex that performs the force can only force into One or None clusters, and if this vertex were adjacent to more than one of these (in any combination), then it would have more than $r$ white neighbors and could not actually perform a force.
\end{Remark}

The case when $r = 2$ is slightly different. In this case, it is possible for a vertex to force two One clusters at the same forcing step (see Example \ref{HPRY-ex-2fold-bad-replication} below). Every $2$-fold PSD forcing set is automatically an AON set, so we cannot claim that if $\blowup{G}{r}$ has a global AON structure, then exactly one cluster will be forced at each forcing step. However, Theorem \ref{HPRY-thm-AON-P} uses consolidation to show that even though every AON PSD forcing set need not have this property, there always exist an AON minimum PSD forcing set and forcing process that do.

\begin{Theorem} \label{HPRY-thm-AON-P}
Let $G$ be a graph and suppose $r \geq 2$. Then there exists an AON minimum $r$-fold PSD forcing set for $G$. For all $r \geq 3$, exactly one cluster of $\blowup{G}{r}$ will be forced at each step of any forcing process that begins with any such set. For $r = 2$, there exists a forcing process for the set constructed such that exactly one cluster of $\blowup{G}{r}$ is forced at each step.
\end{Theorem}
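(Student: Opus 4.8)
The plan is to prove the three assertions of Theorem~\ref{HPRY-thm-AON-P} in sequence, with the construction of an AON minimum forcing set being the heart of the argument. First I would start with an arbitrary minimum $r$-fold PSD forcing set $B$ together with a fixed chronological list of forces, and show how to transform $B$ into an AON set of the same cardinality via consolidation. The key observation is that clusters which are neither All nor None at the start---the ``partially-filled'' clusters---can be reorganized: if a collection of partially-filled clusters together contains $pr+q$ blue vertices, then consolidation packs those into $p$ All clusters, one One cluster (holding the remaining $q$, which we can further reduce to a single blue vertex by the One-cluster convention), and None clusters. I would argue that consolidation does not increase the number of blue vertices and does not destroy the forcing property, using Remark~\ref{HPRY-rmk-backforce-ok} and backforcing to ensure that whenever a partially-filled cluster is forced into during the original process, the consolidated coloring can still mimic or improve upon that force.

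Next I would handle the $r \geq 3$ uniqueness claim, which is essentially immediate from Remark~\ref{HPRY-rmk-AON-only-one-force}: once the set is AON and backforcing is in force, every cluster is All, One, or None at every step (I would verify this invariant is preserved, since any forced cluster becomes All and the remaining clusters retain their type), so the forcing vertex has at most $r$ white neighbors and these all lie in a single One or None cluster. Any attempt to force into two distinct non-All clusters simultaneously would require at least $(r-1)+(r-1) > r$ white neighbors when $r \geq 3$ (or $r + (r-1)$ for a None and a One, etc.), contradicting the $r$-fold rule. Hence exactly one cluster is forced at each step, and the AON structure is an invariant of the whole process.

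For the $r = 2$ case I would argue more carefully, since Remark~\ref{HPRY-rmk-AON-only-one-force} explicitly fails and a single vertex can force two One clusters at once (as Example~\ref{HPRY-ex-2fold-bad-replication} shows). Here the goal is weaker: I only need to exhibit \emph{some} forcing process for the constructed set in which exactly one cluster is forced per step. The plan is to take a process that forces two One clusters simultaneously and serialize it---replace the simultaneous double-force by two consecutive single forces. The subtlety is that when a vertex $u$ with exactly two white neighbors $w_1 \in R_{v_1}$ and $w_2 \in R_{v_2}$ (one in each of two distinct One clusters) forces, after disconnecting on the blue set the two white components may coincide or may be separate; I would show that one can always pick an order in which $u$ (or a backforce) fills $R_{v_1}$ first, and that once $R_{v_1}$ becomes an All cluster the subsequent force into $R_{v_2}$ is still legal under the disconnect rule. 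Backforcing guarantees the intermediate partially-filled cluster is completed before proceeding.

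The main obstacle I anticipate is the first step: verifying rigorously that consolidation preserves the forcing property rather than merely the cardinality. Moving blue vertices between clusters changes the connected-component structure of $\blowup{G}{r} - B$, so I must confirm that the consolidated set can replicate every force of the original process, possibly in a different order and possibly using backforcing to recover intermediate clusters. I expect this will require an inductive argument on the chronological list of forces, tracking at each step that the set of All clusters in the consolidated process dominates the set of fully-forced clusters in the original, so that whatever force was legal originally remains legal (with at least as many blue vertices available) after consolidation.
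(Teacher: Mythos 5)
Your overall strategy---massage a minimum $r$-fold PSD forcing set into AON form and then invoke Remark \ref{HPRY-rmk-AON-only-one-force}---has the right shape, but the central step is carried out with the wrong grouping, and this is a genuine gap rather than a missing verification. You propose to consolidate the partially-filled clusters of the \emph{initial} coloring as one collection (producing $p$ All clusters, a single One cluster, and None clusters) and then to argue by induction that the new coloring can mimic the old process. This cannot work: whether a force $x \force N(x) \cap W_i$ is legal depends on how many blue vertices sit in precisely those clusters adjacent to $x$, so moving blue vertices out of such clusters to build All clusters elsewhere can permanently destroy the process, and no invariant of the form ``the consolidated coloring has at least as many All clusters'' can repair this. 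The paper's own Example \ref{HPRY-ex-not-AON-after-replication} defeats your construction concretely: for that graph with $r=3$, the unique optimal AON set consists of five One clusters (one blue vertex in each leaf cluster); your consolidation would gather those five blue vertices into one All cluster, one cluster with two blue vertices, and three None clusters, after which the three leaf clusters that became None can never initiate forcing in their part of the graph, and the resulting set is not a forcing set at all. (Your aside that the leftover $q$ blue vertices can be ``reduced to a single blue vertex by convention'' is also illegitimate, since it changes the cardinality.) The paper's fix is twofold. First, consolidation is applied only to the group of clusters that are forced \emph{simultaneously at a single step} of the chronological list: by backforcing, those clusters are untouched by all earlier forces and all become All at that step, which is exactly why redistributing blue vertices among them disturbs neither earlier nor later forces; minimality then forces the leftover cluster to be None. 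Second, a separate minimality argument handles a partially-filled cluster $R_u$ containing a vertex $x$ that performs a force before $R_u$ is itself forced: uncoloring every blue vertex of $R_u$ except $x$ preserves the process (since $R_u$ is backforced immediately after $x$ forces), so minimality implies $R_u$ was already a One cluster. This second ingredient is entirely absent from your plan, and it is essential---it is the only source of One clusters, of which there may be many, as the example above shows.

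Your $r=2$ argument has a further gap: you cannot ``serialize'' a simultaneous force of two One clusters, because the $r$-fold rule obliges $x$ to color \emph{all} of its white neighbors in the chosen component at once; there is no partial force, so no reordering of forces for the same initial set can separate the two clusters. The paper instead again modifies the set: it consolidates the two One clusters into one All cluster and one None cluster, which changes the component structure so that only one cluster is forced at that step. Your $r \geq 3$ uniqueness paragraph is fine and matches Remark \ref{HPRY-rmk-AON-only-one-force}.
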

\begin{proof}
We first consider the case where $r \geq 3$. Let $B$ be a minimum $r$-fold PSD forcing set for $G$ and assume that $B$ is not AON. Write a chronological list of the forces performed using the forcing set $B$, assuming the use of backforcing, and let $B_t$, $t \geq 0$, denote the set of blue vertices after step $t$ of this forcing process, where $B_0 = B$.

Suppose that a vertex $x  \in R_u$ performs a force at step $\ell \geq 1$ of the forcing process and $R_u \not \subseteq B_{\ell-1}$, implying that $R_u$ was not forced into at any step prior to step $\ell$. Since we assume backforcing and $R_u$ contains at least one white vertex, $R_u$ was not used to force any other cluster prior to step $\ell$, and $R_u$ will be forced in step $\ell+1$. Thus if $R_u$ is not a One cluster, we can uncolor every blue vertex in $R_u$ except for $x$ without changing the ability of $x$ to force or the ability of $R_u$ to be backforced at step $\ell+1$; since $R_u$ is not involved in any forces prior to step $\ell$, we can make this change in the original set $B$ and obtain a forcing set with fewer blue vertices, contradicting the assumption that $B$ was a minimum forcing set. We conclude that every cluster in a minimum $r$-fold PSD forcing set that is not an All cluster and contains a vertex that performs a force must be a One cluster.

Now, suppose that at step $\ell \geq 1$ we have $x \force W \subseteq \left( R_{u_1} \cup R_{u_2} \cup \cdots \cup R_{u_m} \right)$ for some $m \geq 2$, where each $R_{u_j}$ contains at least one white vertex. Since $x$ is performing a force, it has at most $r$ white neighbors in the component containing $\bigcup_{j=1}^m R_{u_j}$, so there are at least $r(m-1)$ blue vertices in $\bigcup_{j=1}^m R_{u_j}$. Each cluster $R_{u_j}$ is an All cluster after step $\ell$, and no $R_{u_j}$ was forced into prior to step $\ell$. Since we assume backforcing and each of the $R_{u_j}$ clusters contains at least one white vertex, none of the $R_{u_j}$ clusters contains a vertex that was used to force at a step prior to step $\ell$. Analogous to Remark \ref{HPRY-rmk-backforce-ok}, removing blue vertices from any of the $R_{u_j}$ will not affect the application of the disconnect property, as each $R_{u_j}$ contains at least one white vertex. Similarly, adding blue vertices to convert an $R_{u_j}$ into an All cluster may make available additional disconnects (which we do not use), but these would not affect any previous forces. Therefore, we can consolidate the (at least $r(m-1)$) blue vertices in $\bigcup_{j=1}^m R_{u_j}$ without affecting the ability to perform any previous force.

Without loss of generality, suppose that $R_{u_1}, \ldots, R_{u_{m-1}}$ become All clusters after the consolidation and any remaining blue vertices are left in $R_{u_m}$. After consolidation, the new force at step $\ell$ will be $x \force R_{u_m}$; after this point, the state of the system is the same as it would have been had we not consolidated (i.e., every $R_{u_j}$ is an All cluster), so future forces are unaffected by consolidation. Furthermore, after consolidation, exactly one cluster ($R_{u_m}$) is forced at step $\ell$. Since the consolidation process does not affect any of the forces before or after the force at step $\ell$, we are free to perform the consolidation on the original set $B$ to obtain a new minimum $r$-fold PSD forcing set $\widetilde{B}$ and the sequence of vertices that perform forces remains unchanged. Note that since $\widetilde{B}$ is minimum, $R_{u_m}$ must necessarily be a None cluster.

By repeated application of the consolidation process, we are able to convert every non-One cluster into an All cluster or a None cluster. By Remark \ref{HPRY-rmk-AON-only-one-force}, any AON forcing process for $r \geq 3$ must necessarily consist of forcing only one cluster at each step, which proves the claim for $r \geq 3$.

Now, suppose that $r = 2$. Every minimum $2$-fold PSD forcing set for $G$ is automatically an AON set. Suppose that, at step $\ell \geq 1$ of the forcing process, more than one cluster must be forced. Since any vertex can force at most 2 of its neighbors, it must be the case that two One clusters are forced at this step. For the reasons described in the $r \geq 3$ case, we can consolidate these two One clusters into one All cluster and one None cluster without affecting any previous or future forces; after this consolidation, only one cluster is forced at step $\ell$. As before, we can modify our original minimum forcing set and the result follows for the $r = 2$ case (using the forcing process to which consolidation was applied).
\end{proof}

We call the type of AON minimum $r$-fold PSD forcing set guaranteed to exist by Theorem \ref{HPRY-thm-AON-P} an \defbf{optimal AON $r$-fold PSD forcing set}. We emphasize that an optimal AON $r$-fold PSD forcing set is minimum by definition, and given an optimal AON $r$-fold PSD forcing set there is a corresponding forcing process in which exactly one cluster is forced at each step. Further, the set of blue vertices at each step of the forcing process associated with an optimal AON $r$-fold PSD forcing set will always create a global AON structure in $\blowup{G}{r}$.

Suppose that $B$ is an AON $r$-fold PSD forcing set for a graph $G$ and color $\blowup{G}{r}$ with $B$. We use $a(B)$ to denote the number of All clusters in $\blowup{G}{r}$ and $\ell(B)$ to denote the number of One clusters in $\blowup{G}{r}$, so $|B| = r \cdot a(B) + \ell(B)$. This new terminology yields a corollary to Theorem \ref{HPRY-thm-AON-P}.

\begin{Corollary} \label{HPRY-cor-zr-alt-def}
For every graph $G$ and $r \geq 2$, there exists an optimal AON $r$-fold PSD forcing set for $G$, and for any such set $B$, we have $\zr{r}^+(G) = |B| = r \cdot a(B) + \ell(B)$.
\end{Corollary}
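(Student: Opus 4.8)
The plan is to read off all three assertions directly from Theorem \ref{HPRY-thm-AON-P} together with the counting identity established immediately before the statement, since this is a bookkeeping corollary rather than a genuinely new argument.

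First I would invoke Theorem \ref{HPRY-thm-AON-P} to obtain the existence of an optimal AON $r$-fold PSD forcing set $B$ for $G$. This is precisely the type of set the theorem produces—a minimum $r$-fold PSD forcing set that is AON and admits a forcing process in which exactly one cluster is forced at each step—so nothing further is needed for the existence claim. For the equality $\zr{r}^+(G) = |B|$, I would note that an optimal AON $r$-fold PSD forcing set is minimum by definition; since $\zr{r}^+(G)$ is defined as the cardinality of a minimum $r$-fold PSD forcing set, we get $|B| = \zr{r}^+(G)$ immediately for every such $B$.

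Finally, for $|B| = r \cdot a(B) + \ell(B)$, I would partition the clusters of $\blowup{G}{r}$ according to the AON property: each cluster is an All, a One, or a None cluster. Each of the $a(B)$ All clusters contributes exactly $r$ blue vertices, each of the $\ell(B)$ One clusters contributes exactly one blue vertex, and each None cluster contributes none. Since the clusters $R_u$ partition $V(\blowup{G}{r})$ by construction of the blowup, summing the contributions over all clusters yields $|B| = r \cdot a(B) + \ell(B)$.

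The only point requiring any care—and the closest thing to an obstacle—is this counting step: I must confirm that the clusters genuinely partition the blue set, which holds because they partition $V(\blowup{G}{r})$, and that the AON hypothesis places every cluster into exactly one of the three categories, so that no blue vertex is double-counted or omitted. Everything else follows formally from the theorem and the definitions, which is what makes this a corollary rather than a standalone result.
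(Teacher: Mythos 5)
Your proposal is correct and matches the paper's reasoning exactly: the paper also treats this as an immediate consequence of Theorem \ref{HPRY-thm-AON-P} (existence, plus minimality of optimal AON sets giving $|B| = \zr{r}^+(G)$) combined with the counting identity $|B| = r \cdot a(B) + \ell(B)$, which the paper states just before the corollary using the same cluster-partition observation you make. Nothing is missing; the care you take in checking that the AON clusters partition the blue vertices is precisely the implicit content of the paper's one-line justification.
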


\begin{Definition}
Let $r, s \geq 2$ with $s \neq r$ and suppose that $B$ is an AON $r$-fold PSD forcing set for $G$. Copy the AON structure of $\blowup{G}{r}$ when colored with $B$ onto $\blowup{G}{s}$ to create a new AON set of blue vertices of cardinality $s \cdot a(B) + \ell(B)$. This process is called \defbf{replication}.
\end{Definition}

\begin{Remark} \label{HPRY-rmk-replication-P}
Let $B$ be a $2$-fold PSD forcing set for $G$ and suppose that two One clusters are forced simultaneously at some step of the forcing process on $\blowup{G}{2}$. In this case, replicating $B$ onto $\blowup{G}{s}$ for $s > 2$ will not yield a valid forcing set (see Example \ref{HPRY-ex-2fold-bad-replication}, next). However, if $B$ is an optimal AON $2$-fold PSD forcing set, then Theorem \ref{HPRY-thm-AON-P} guarantees that there is a forcing process in which exactly one force occurs at each step, so replication will yield a valid forcing set. As we see in Example \ref{HPRY-ex-not-AON-after-replication}, however, the replicated set may not be minimum and hence not optimal.
\end{Remark}

\begin{figure}[h]
\begin{center}
	\begin{subfigure}[b]{0.45\textwidth}
	\begin{center}
		\includegraphics[scale=1]{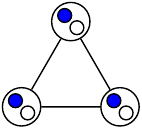} \qquad 
		\caption{(Minimum) AON $2$-fold PSD forcing set}
		\label{HPRY-fig-psd-replication-K3-a}
	\end{center}
	\end{subfigure}
	\quad
	\begin{subfigure}[b]{0.45\textwidth}
	\begin{center}
		\includegraphics[scale=1]{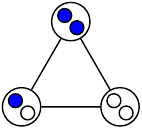} \qquad 
		\caption{Optimal AON $2$-fold PSD forcing set}
		\label{HPRY-fig-psd-replication-K3-b}
	\end{center}
	\end{subfigure}
\caption{AON $2$-fold PSD forcing sets for $K_3$}
\label{HPRY-fig-psd-replication-K3}
\end{center}
\end{figure}

\begin{Example} \label{HPRY-ex-2fold-bad-replication}
Consider the (minimum) $2$-fold PSD forcing sets for $K_3$ shown in Figure \ref{HPRY-fig-psd-replication-K3}. For simplicity, the edges in the figure represent the complete bipartite graphs between the clusters at their endpoints. The first forcing step in Figure \ref{HPRY-fig-psd-replication-K3-a} would consist of forcing two of the One clusters simultaneously. This set is no longer a forcing set when replicated onto $\blowup{K_3}{s}$ for $s \geq 3$, as each of the blue vertices will have too many white neighbors to perform a force. The optimal AON PSD forcing set shown in Figure \ref{HPRY-fig-psd-replication-K3-b}, however, can be replicated successfully, as only one cluster must be forced at any step of the forcing process.
\end{Example}

\begin{figure}[h]
\begin{center}
	\begin{subfigure}[b]{0.3\textwidth}
	\begin{center}
		\includegraphics[scale=1, angle=-90]{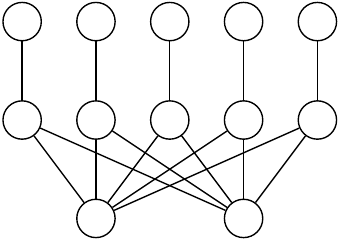} \qquad 
		\caption{Graph $G$}
		\label{HPRY-fig-psd-replication-a}
	\end{center}
	\end{subfigure}
	\begin{subfigure}[b]{0.3\textwidth}
	\begin{center}
		\includegraphics[scale=1, angle=-90]{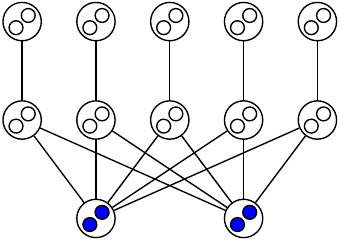} \qquad 
		\caption{$r = 2$}
		\label{HPRY-fig-psd-replication-b}
	\end{center}
	\end{subfigure}
	\quad
	\begin{subfigure}[b]{0.3\textwidth}
	\begin{center}
		\includegraphics[scale=1, angle=-90]{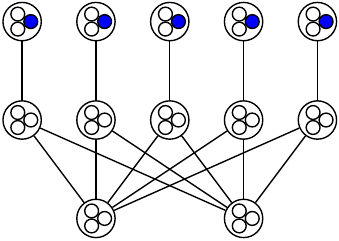} \qquad 
		\caption{$r = 3$}
		\label{HPRY-fig-psd-replication-c}
	\end{center}
	\end{subfigure}
\caption{Optimal AON $r$-fold PSD forcing sets}
\label{HPRY-fig-psd-replication}
\end{center}
\end{figure}

\begin{Example} \label{HPRY-ex-not-AON-after-replication}
Suppose that we have the complete bipartite graph $K_{5,2}$ and let $G$ be the graph formed by attaching one leaf to each of the vertices in the partite set containing five vertices (Figure \ref{HPRY-fig-psd-replication-a}). Consider the (unique) optimal AON $r$-fold PSD forcing sets for $G$ shown in Figures \ref{HPRY-fig-psd-replication-b} and \ref{HPRY-fig-psd-replication-c}. When $r = 2$, the forcing set has two All clusters, so $\zr{2}^+(G) = 4$. When $r = 3$, the forcing set has five One clusters, so $\zr{3}^+(G) = 5$. Replicating either optimal forcing set onto the other blowup will generate a forcing set that is not minimum, hence not optimal.
\end{Example}

We now prove further properties of AON $r$-fold PSD forcing sets and use these results to provide an alternate definition of the fractional PSD forcing number.

\begin{Lemma} \label{HPRY-lem-AON-num-alls-large-r-P}
Let $G$ be a graph on $n$ vertices and fix $r \geq n$. Let $B$ be an optimal AON $r$-fold PSD forcing set for $G$ and let $B'$ be an AON $r$-fold PSD forcing set for $G$. Then $a(B) \leq a(B')$.
\end{Lemma}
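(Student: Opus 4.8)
The plan is to play the cardinality of $B$ against that of $B'$ and then count clusters. Since an optimal AON set is in particular a minimum $r$-fold PSD forcing set, we have $|B| \le |B'|$, and using $|B| = r\cdot a(B) + \ell(B)$ together with the analogous expression for $B'$ this reads
\[ r\cdot a(B) + \ell(B) \le r\cdot a(B') + \ell(B'). \]
Because $\blowup{G}{r}$ has exactly $n$ clusters, we have $0 \le \ell(B') \le n$ and $\ell(B) \ge 0$. First I would rearrange the inequality and chain these bounds with the hypothesis $r \ge n$:
\[ r\bigl(a(B) - a(B')\bigr) \le \ell(B') - \ell(B) \le \ell(B') \le n \le r, \]
which immediately yields $a(B) - a(B') \le 1$.

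It remains to upgrade this to $a(B) \le a(B')$, and I expect the single exceptional configuration to be the main obstacle. Reading off the equality conditions of the chain, $a(B) = a(B') + 1$ forces $r = n$, $\ell(B) = 0$, and $\ell(B') = n$; the last equality says that all $n$ clusters of $B'$ are One clusters, so $a(B') = 0$, $a(B) = 1$, and $|B| = |B'| = n$. Thus it suffices to show that this boundary configuration cannot occur, i.e., that if the all-One set $B'$ forces $\blowup{G}{n}$ then there is a forcing set of size $n-1$, contradicting the minimality of $B$.

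To produce that smaller set I would argue as in Remark \ref{HPRY-rmk-backforce-ok}. Assume $n \ge 3$ (the cases $n \le 2$ are immediate, as the configuration is then vacuous), so that $r = n \ge 3$ and, by Remark \ref{HPRY-rmk-AON-only-one-force}, the process from $B'$ forces exactly one cluster per step. Let $R_w$ be the cluster forced first, say by $x \in R_u$ with $u$ adjacent to $w$, so that in the relevant white component $W_i$ the set $N(x) \cap W_i$ equals precisely the white vertices of $R_w$. Now delete the unique blue vertex $b$ of $R_w$, making $R_w$ a None cluster. The key point is that every white neighbor of $b$ lies in a cluster adjacent to $R_w$, and each such cluster is joined completely to the (now $r-1 \ge 2$) white vertices of $R_w$, hence already lies in $W_i$; so adding $b$ to the white graph only enlarges $W_i$ to $W_i \cup \{b\}$ and merges no components. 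Consequently $N(x) \cap (W_i \cup \{b\})$ is all of $R_w$, which has $r = n \le r$ vertices, so $x$ may still force $R_w$ in a single step. After this force the coloring coincides exactly with the state of the original process after its first step, so every subsequent force carries over verbatim. The modified set therefore forces $\blowup{G}{n}$ and has size $n-1$, the required contradiction.

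The whole argument is short except for this last step: the counting is routine, and the real content is verifying that removing the blue vertex of the first-forced cluster neither fragments the forcing process nor pushes the forcer above its threshold of $r$ white neighbors.
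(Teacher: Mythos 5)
Your proof is correct and follows essentially the same route as the paper's: the identical cardinality-plus-integrality counting ($|B| \leq |B'|$, divide by $r$, use $\ell(B') < n \leq r$) handles everything except the boundary configuration $\ell(B') = n$, $r = n$, and that case is disposed of by exactly the paper's device of deleting the blue vertex of the first-forced cluster of the all-One set $B'$. The only cosmetic difference is that the paper turns $B'$ into a new AON forcing set $B''$ with $a(B'') = 0$ and $\ell(B'') = n-1$ and reduces to the generic case, whereas you use the same modified set (of size $n-1$) to contradict the minimality of $B$ directly; your write-up also supplies the component-merging verification that the paper leaves implicit.
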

\begin{proof}
Assume first that $\ell(B') < n$. Since $B$ is optimal, it is minimum, so $r \cdot a(B) + \ell(B) = |B| \leq |B'| = r \cdot a(B') + \ell(B')$. Dividing through by $r$ and manipulating this inequality yields
\[ a(B) - a(B') \leq \frac{\ell(B') - \ell(B)}{r} < \frac{n}{r} \leq 1. \]
Since $a(B) - a(B')$ is an integer, we must have $a(B) - a(B') \leq 0$, which proves the claim when $\ell(B') < n$. Now suppose that $\ell(B') = n$, so $a(B') = 0$. Since $r \geq n$, at most one force happens at each step, so we can replace the first cluster forced with a None cluster to obtain a new AON $r$-fold PSD forcing set $B''$ with $a(B'') = a(B') = 0$ and $\ell(B'') = n-1 < n$.
\end{proof}

\begin{Corollary} \label{HPRY-cor-AON-min-same-A-P}
Let $G$ be a graph on $n$ vertices and fix $r \geq n$. If $B$ and $B'$ are optimal AON $r$-fold PSD forcing sets for $G$, then $a(B) = a(B')$.
\end{Corollary}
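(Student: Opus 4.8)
The plan is to derive the equality from a symmetric double application of Lemma \ref{HPRY-lem-AON-num-alls-large-r-P}. The crucial observation is that an optimal AON $r$-fold PSD forcing set is, by definition, in particular an AON $r$-fold PSD forcing set. Consequently, Lemma \ref{HPRY-lem-AON-num-alls-large-r-P} can be invoked whenever one of the two given sets is taken as the optimal set and the other is regarded merely as an AON set, and this holds in either assignment of roles since $B$ and $B'$ are both optimal (hence both AON).

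First I would apply Lemma \ref{HPRY-lem-AON-num-alls-large-r-P} with $B$ playing the role of the optimal set and $B'$ playing the role of the arbitrary AON set; this is legitimate precisely because $B'$ is an AON set, and it yields $a(B) \leq a(B')$. Next I would reverse the roles, applying the same lemma with $B'$ as the optimal set and $B$ as the arbitrary AON set, which gives $a(B') \leq a(B)$. Combining the two inequalities immediately forces $a(B) = a(B')$, as desired.

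I expect essentially no obstacle here: the corollary is a direct consequence of the lemma once one recognizes that the two hypotheses of the lemma (one set optimal, the other merely AON) are satisfied in each of the two role assignments. The only point worth verifying is that the standing assumption $r \geq n$ in the corollary matches exactly the hypothesis required by Lemma \ref{HPRY-lem-AON-num-alls-large-r-P}, so no extra case analysis or auxiliary constructions are needed.
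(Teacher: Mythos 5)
Your proof is correct and matches the paper's intended argument: the paper states this corollary without proof, precisely because the symmetric double application of Lemma \ref{HPRY-lem-AON-num-alls-large-r-P} (using that an optimal AON set is in particular an AON set) is immediate. Both role assignments satisfy the lemma's hypotheses under the shared assumption $r \geq n$, so the two inequalities combine to give $a(B) = a(B')$.
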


Thus for a fixed ``large enough" $r$, every optimal AON $r$-fold PSD forcing set for $G$ must contain the same number of All clusters (and, consequently, One clusters). Of particular interest is the case $r = n = |G|$. We define $\astar^+(G)$ to be the unique number of All clusters created in $\blowup{G}{n}$ by any optimal AON $n$-fold PSD forcing set for $G$, and define $\ellstar^+(G)$ to be the (unique) number of One clusters created in this manner.

\begin{Proposition} \label{HPRY-prop-astar-homogeneity-P}
Let $G$ be a graph on $n$ vertices. For all $r \geq n$, if $B$ is an optimal AON $r$-fold PSD forcing set for $G$, then $a(B) = \astar^+(G)$.
\end{Proposition}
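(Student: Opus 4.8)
The plan is to leverage Corollary \ref{HPRY-cor-AON-min-same-A-P} together with replication and Lemma \ref{HPRY-lem-AON-num-alls-large-r-P}. By Corollary \ref{HPRY-cor-AON-min-same-A-P}, for each fixed $r \geq n$ the number of All clusters is the same across all optimal AON $r$-fold PSD forcing sets; call this common value $a_r$. By the definition of $\astar^+(G)$ we have $a_n = \astar^+(G)$, so it suffices to prove $a_r = \astar^+(G)$ for every $r > n$, which I would do via a two-sided inequality. The two facts I would use repeatedly are: (i) an optimal AON forcing set admits, by Theorem \ref{HPRY-thm-AON-P}, a forcing process in which exactly one cluster is forced at each step, so by Remark \ref{HPRY-rmk-replication-P} replicating it onto any other blowup produces a valid AON forcing set; and (ii) replication copies the AON pattern cluster by cluster and hence exactly preserves both the number of All clusters and the number of One clusters.

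For the inequality $\astar^+(G) \leq a_r$, I would take an optimal AON $r$-fold PSD forcing set $B$ and replicate it onto $\blowup{G}{n}$, obtaining an AON $n$-fold PSD forcing set $B'$ with $a(B') = a(B) = a_r$. Applying Lemma \ref{HPRY-lem-AON-num-alls-large-r-P} at level $n$ (with $B'$ as the arbitrary AON set and an optimal AON $n$-fold set in the role of the optimal one) yields $\astar^+(G) \leq a(B') = a_r$. For the reverse inequality $a_r \leq \astar^+(G)$, I would start from an optimal AON $n$-fold PSD forcing set $B_0$, so $a(B_0) = \astar^+(G)$, and replicate it onto $\blowup{G}{r}$ to get an AON $r$-fold PSD forcing set $B_0'$ with $a(B_0') = \astar^+(G)$. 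Applying Lemma \ref{HPRY-lem-AON-num-alls-large-r-P} at level $r$ (legitimate since $r \geq n$), the optimal set $B$ satisfies $a_r = a(B) \leq a(B_0') = \astar^+(G)$. Combining the two inequalities gives $a_r = \astar^+(G)$, as required.

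I expect the only genuine obstacle to be justifying that both replications actually yield valid forcing sets and that they preserve the All-cluster count. Both points are already handled by the machinery developed earlier: optimality guarantees a one-force-per-step process (Theorem \ref{HPRY-thm-AON-P}), which is exactly the condition under which Remark \ref{HPRY-rmk-replication-P} certifies that replication preserves forceability, while the definition of replication makes the preservation of $a$ and $\ell$ immediate. Everything else reduces to two applications of Lemma \ref{HPRY-lem-AON-num-alls-large-r-P}, so no separate case analysis (e.g.\ for small $n$) is needed.
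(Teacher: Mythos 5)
Your proposal is correct and follows essentially the same route as the paper's proof: both directions are obtained by replicating between $\blowup{G}{r}$ and $\blowup{G}{n}$ and applying Lemma \ref{HPRY-lem-AON-num-alls-large-r-P} twice, once at each level. The only cosmetic difference is your invocation of Corollary \ref{HPRY-cor-AON-min-same-A-P} to name the common value $a_r$, which the paper bypasses by arguing directly about the given set $B$.
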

\begin{proof}
Let $\widetilde{B}$ be the AON $n$-fold PSD forcing set formed by replicating $B$ onto $\blowup{G}{n}$. By Lemma \ref{HPRY-lem-AON-num-alls-large-r-P}, $\astar^+(G) \leq a(\widetilde{B}) = a(B)$. Similarly, let $B'$ be the AON $r$-fold PSD forcing set formed by replicating any optimal AON $n$-fold PSD forcing set onto $\blowup{G}{r}$. By Lemma \ref{HPRY-lem-AON-num-alls-large-r-P}, $a(B) \leq a(B') = \astar^+(G)$, and thus equality holds.
\end{proof}

\begin{Corollary} \label{HPRY-cor-characterize-zr-large-r-P}
Let $G$ be a graph on $n$ vertices. For all $r \geq n$, $\zr{r}^+(G) = r \cdot \astar^+(G) + \ellstar^+(G)$. Additionally,
\[ \lim_{r \to \infty} \frac{\zr{r}^+(G)}{r} = \astar^+(G). \]
\end{Corollary}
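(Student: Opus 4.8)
The plan is to combine Proposition \ref{HPRY-prop-astar-homogeneity-P} with Corollary \ref{HPRY-cor-zr-alt-def} to pin down $\zr{r}^+(G)$ exactly for all $r \geq n$, and then take the limit. The first formula is nearly immediate: fix $r \geq n$ and let $B$ be an optimal AON $r$-fold PSD forcing set, which exists by Theorem \ref{HPRY-thm-AON-P}. By Corollary \ref{HPRY-cor-zr-alt-def} we have $\zr{r}^+(G) = r \cdot a(B) + \ell(B)$, and by Proposition \ref{HPRY-prop-astar-homogeneity-P} we have $a(B) = \astar^+(G)$. So the only remaining point is that $\ell(B) = \ellstar^+(G)$ for every such $B$. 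This is where a little care is needed.

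The cleanest route to $\ell(B) = \ellstar^+(G)$ is to observe that since $a(B) = \astar^+(G)$ is forced and $B$ is minimum, $\ell(B)$ is also determined. Concretely, since $\astar^+(G)$ and $\ellstar^+(G)$ are realized by an optimal AON $n$-fold set, replicate it onto $\blowup{G}{r}$ to obtain a valid AON $r$-fold forcing set $B'$ with $a(B') = \astar^+(G)$ and $\ell(B') = \ellstar^+(G)$, hence $|B'| = r\cdot\astar^+(G) + \ellstar^+(G)$. Because $B$ is minimum, $|B| \leq |B'|$; but $a(B) = a(B') = \astar^+(G)$ already, so $r\cdot a(B) + \ell(B) \leq r \cdot a(B') + \ell(B')$ gives $\ell(B) \leq \ellstar^+(G)$. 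For the reverse inequality, replicate $B$ onto $\blowup{G}{n}$ to get an AON $n$-fold forcing set with the same All/One counts, which by minimality of the $n$-fold optimal set gives $\ellstar^+(G) \leq \ell(B)$ (using that the number of All clusters already matches $\astar^+(G)$). Combining the two inequalities yields $\ell(B) = \ellstar^+(G)$ and therefore $\zr{r}^+(G) = r\cdot\astar^+(G) + \ellstar^+(G)$.

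For the limit, divide the established formula by $r$:
\[
\frac{\zr{r}^+(G)}{r} = \astar^+(G) + \frac{\ellstar^+(G)}{r}.
\]
Since $\astar^+(G)$ and $\ellstar^+(G)$ are fixed nonnegative constants depending only on $G$ (independent of $r$), the second term tends to $0$ as $r \to \infty$, so the limit is $\astar^+(G)$, as claimed.

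The main obstacle I anticipate is the verification that $\ell(B) = \ellstar^+(G)$ for \emph{every} optimal AON $r$-fold set, not just the replicated one; this requires carefully invoking minimality together with the already-established equality $a(B) = \astar^+(G)$, and making sure the replication arguments stay valid (e.g. that replicating an optimal $n$-fold set produces a genuine forcing set, which relies on the one-cluster-per-step property from Theorem \ref{HPRY-thm-AON-P} as in Remark \ref{HPRY-rmk-replication-P}). Everything else is routine arithmetic once the exact formula for $\zr{r}^+(G)$ is in hand.
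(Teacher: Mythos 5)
Your proof is correct and follows essentially the route the paper intends: the corollary is stated without proof precisely because it is the immediate combination of Corollary \ref{HPRY-cor-zr-alt-def} ($\zr{r}^+(G) = r \cdot a(B) + \ell(B)$) with Proposition \ref{HPRY-prop-astar-homogeneity-P} ($a(B) = \astar^+(G)$), together with the two replication comparisons (between $\blowup{G}{r}$ and $\blowup{G}{n}$) already used in proving that proposition, which pin down $\ell(B) = \ellstar^+(G)$. Your extra care in verifying $\ell(B) = \ellstar^+(G)$ for every optimal AON set, and in noting that replication is legitimate only because of the one-cluster-per-step property from Theorem \ref{HPRY-thm-AON-P}, is exactly the right filling-in of the details the paper leaves implicit.
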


Before we can prove the final result of this section, which ties the fractional positive semidefinite forcing number into the machinery just developed, we require one final utility result.

\begin{Lemma} \label{HPRY-lem-rfold-AON-lowerbound}
Let $G$ be a graph on $n$ vertices and choose $r \geq 2$. Then for any optimal AON $r$-fold PSD forcing set $B$, $\frac{|B|}{r} \geq \astar^+(G)$.
\end{Lemma}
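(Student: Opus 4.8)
The plan is to reduce the inequality to a statement purely about the number of All clusters, and then transport that statement to a large blowup where Proposition \ref{HPRY-prop-astar-homogeneity-P} pins the count down exactly. Since $B$ is an AON $r$-fold PSD forcing set, we have the decomposition $|B| = r \cdot a(B) + \ell(B)$, and hence
\[
\frac{|B|}{r} = a(B) + \frac{\ell(B)}{r} \geq a(B).
\]
So it suffices to prove that $a(B) \geq \astar^+(G)$; the One clusters only help and can be discarded from the bound.

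To establish $a(B) \geq \astar^+(G)$, first I would fix some integer $s \geq n$ with $s \neq r$ (for instance $s = \max\{n, r+1\}$) and replicate $B$ onto $\blowup{G}{s}$. Because $B$ is \emph{optimal}, Theorem \ref{HPRY-thm-AON-P} supplies a forcing process in which exactly one cluster is forced at each step, so by Remark \ref{HPRY-rmk-replication-P} the replication produces a valid AON $s$-fold PSD forcing set $B'$ with $a(B') = a(B)$ (and $\ell(B') = \ell(B)$). Now let $B^*$ be any optimal AON $s$-fold PSD forcing set for $G$, which exists by Theorem \ref{HPRY-thm-AON-P}. Since $s \geq n$, Lemma \ref{HPRY-lem-AON-num-alls-large-r-P} applies to the pair $(B^*, B')$ and gives $a(B^*) \leq a(B')$, while Proposition \ref{HPRY-prop-astar-homogeneity-P} (again using $s \geq n$) gives $a(B^*) = \astar^+(G)$. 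Chaining these together yields
\[
\astar^+(G) = a(B^*) \leq a(B') = a(B),
\]
which is exactly the reduced inequality, completing the argument.

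The step I expect to carry the real weight is the validity of the replication producing $B'$, i.e., the claim that the replicated coloring really is a forcing set on $\blowup{G}{s}$. This is precisely where optimality of $B$ is indispensable: as Example \ref{HPRY-ex-2fold-bad-replication} shows, a merely AON (non-optimal) set whose process forces two One clusters simultaneously can fail to replicate, since the forcing vertex would acquire too many white neighbors in the larger blowup. The ``one cluster forced per step'' property guaranteed by Theorem \ref{HPRY-thm-AON-P} is what ensures each force still respects the at-most-$s$-white-neighbors bound after replication, so I would lean on Remark \ref{HPRY-rmk-replication-P} to justify this cleanly rather than re-deriving it. Everything else is bookkeeping between $a$, $\ell$, and $|B|/r$.
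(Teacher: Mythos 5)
Your proof is correct, but it routes the argument differently than the paper does. The paper splits into two cases: for $2 \leq r < n$ it replicates $B$ onto $\blowup{G}{n}$ and runs a cardinality chain, $\frac{|B|}{r} = a(\widetilde{B}) + \frac{\ell(\widetilde{B})}{r} \geq a(\widetilde{B}) + \frac{\ell(\widetilde{B})}{n} = \frac{|\widetilde{B}|}{n} \geq \frac{|B'|}{n} \geq \astar^+(G)$, where the first inequality needs $r < n$ and the second uses minimality of an optimal $n$-fold set $B'$; for $r \geq n$ it invokes Proposition \ref{HPRY-prop-astar-homogeneity-P} directly. You instead discard $\ell(B)/r$ at the outset, reducing everything to the All-count inequality $a(B) \geq \astar^+(G)$, and then transport that single statement to a blowup of size $s = \max\{n, r+1\}$, where Lemma \ref{HPRY-lem-AON-num-alls-large-r-P} (comparing All-counts of an optimal set against an arbitrary AON set) and Proposition \ref{HPRY-prop-astar-homogeneity-P} finish the job. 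Both arguments rest on the same two pillars --- validity of replication for optimal AON sets (the one-cluster-per-step property from Theorem \ref{HPRY-thm-AON-P}, which you correctly identify as the load-bearing step and which the paper uses silently in the same way) and minimality at level $\geq n$ --- but your version avoids the case split and the $\ell/r$ versus $\ell/n$ arithmetic entirely by letting Lemma \ref{HPRY-lem-AON-num-alls-large-r-P} absorb the integrality argument, at the cost of leaning on two prior results where the paper's $r < n$ case re-derives the bound from minimality alone. Your choice of $s$ with $s \neq r$ also correctly respects the technical requirement in the definition of replication, which makes the argument uniform over all $r \geq 2$.
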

\begin{proof}
First, suppose that $2 \leq r < n$. Let $\widetilde{B}$ be the AON $n$-fold PSD forcing set obtained by replicating $B$ onto $\blowup{G}{n}$. Then $a(B) = a(\widetilde{B})$ and $\ell(B) = \ell(\widetilde{B})$, so
\[ \frac{|B|}{r} = a(B) + \frac{\ell(B)}{r} = a(\widetilde{B}) + \frac{\ell(\widetilde{B})}{r} \geq a(\widetilde{B}) + \frac{\ell(\widetilde{B})}{n} = \frac{|\widetilde{B}|}{n} \, . \]

Let $B'$ be any optimal AON $n$-fold PSD forcing set for $G$. Since $B'$ is optimal, it is minimum, hence $|\widetilde{B}| \geq |B'|$. Therefore,
\[ \frac{|B|}{r}	\geq	\frac{|\widetilde{B}|}{n}	\geq	\frac{|B'|}{n} =	\astar^+(G) + \frac{\ellstar^+(G)}{n}	\geq	\astar^+(G), \]
which proves the claim when $r < n$.

If $r \geq n$, then Proposition \ref{HPRY-prop-astar-homogeneity-P} shows that $|B| = r \cdot \astar^+(G) + \ellstar^+(G)$ and the conclusion follows.
\end{proof}

We conclude this section with an alternate characterization of fractional positive semidefinite forcing number.

\begin{Theorem} \label{HPRY-thm-characterize-zf-P}
For every graph $G$,
\[ Z_f^+(G)  = \astar^+(G). \]
\end{Theorem}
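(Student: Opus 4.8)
The plan is to prove the two inequalities $Z_f^+(G) \le \astar^+(G)$ and $Z_f^+(G) \ge \astar^+(G)$ separately, and in each case the real work has already been packaged into the results immediately preceding the theorem. Recall that $Z_f^+(G) = \inf_{r \ge 2} \{ \zr{r}^+(G)/r \}$, so both bounds are statements about this infimum.

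For the upper bound, I would invoke Corollary \ref{HPRY-cor-characterize-zr-large-r-P}. It gives $\zr{r}^+(G) = r \cdot \astar^+(G) + \ellstar^+(G)$ for all $r \ge n$, and hence
\[ \frac{\zr{r}^+(G)}{r} = \astar^+(G) + \frac{\ellstar^+(G)}{r} \]
for every such $r$. Since $\ellstar^+(G) \ge 0$ is a fixed constant, these ratios converge to $\astar^+(G)$ from above as $r \to \infty$ (this is exactly the limit recorded in that corollary). Because $Z_f^+(G)$ is the infimum of the ratios $\zr{r}^+(G)/r$ over all $r \ge 2$, it is bounded above by any particular such ratio, and therefore by the limiting value $\astar^+(G)$. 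This yields $Z_f^+(G) \le \astar^+(G)$.

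For the lower bound, I would argue that $\zr{r}^+(G)/r \ge \astar^+(G)$ holds for \emph{every} $r \ge 2$, not merely in the limit. Fix $r \ge 2$. By Theorem \ref{HPRY-thm-AON-P} there exists an optimal AON $r$-fold PSD forcing set $B$ for $G$, and by Corollary \ref{HPRY-cor-zr-alt-def} any such set satisfies $\zr{r}^+(G) = |B|$. Lemma \ref{HPRY-lem-rfold-AON-lowerbound} then gives $|B|/r \ge \astar^+(G)$, so
\[ \frac{\zr{r}^+(G)}{r} = \frac{|B|}{r} \ge \astar^+(G). \]
Since this lower bound holds uniformly over all $r \ge 2$, taking the infimum preserves it and we obtain $Z_f^+(G) \ge \astar^+(G)$. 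Combining the two inequalities gives $Z_f^+(G) = \astar^+(G)$.

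I do not expect a genuine obstacle here, since the substantive content lives in Lemma \ref{HPRY-lem-rfold-AON-lowerbound} (the uniform lower bound via replication and minimality) and in Corollary \ref{HPRY-cor-characterize-zr-large-r-P} (the exact formula and limit for large $r$). The only point requiring a moment of care is the interplay between the infimum and these per-$r$ estimates: one must verify that the uniform lower bound on every ratio justifies the inequality for the infimum, and that the family of ratios for $r \ge n$ legitimately approaches $\astar^+(G)$ from above to furnish the matching upper bound. Once those two observations are in place, the theorem follows immediately by pinching.
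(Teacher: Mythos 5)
Your proposal is correct and follows essentially the same route as the paper's own proof: the upper bound via Corollary \ref{HPRY-cor-characterize-zr-large-r-P} (the exact formula and limit for $r \geq n$), and the lower bound by combining Corollary \ref{HPRY-cor-zr-alt-def} with Lemma \ref{HPRY-lem-rfold-AON-lowerbound} uniformly over all $r \geq 2$. The only difference is that you spell out the infimum bookkeeping more explicitly than the paper does.
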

\begin{proof}
Recall that $Z_f^+ = \inf_{r \geq 2} \left\{ \frac{\zr{r}^+(G)}{r} \right\}$. By Corollary \ref{HPRY-cor-characterize-zr-large-r-P}, $Z_f^+(G) \leq \astar^+(G)$. Let $B$ be an optimal AON $r$-fold PSD forcing set for $G$ for some $r \geq 2$. Then by Corollary \ref{HPRY-cor-zr-alt-def} and Lemma \ref{HPRY-lem-rfold-AON-lowerbound}, $\frac{\zr{r}^+(G)}{r} = \frac{|B|}{r} \geq \astar^+(G)$, and thus equality holds.
\end{proof}

This shows that the fractional positive semidefinite forcing number of a graph is always a nonnegative integer, an interesting result in light of its fractional construction.

\subsection{Three-color interpretation of fractional positive semidefinite forcing} \label{HPRY-sec-3color-PSD}

Motivated by the AON interpretation of the $r$-fold positive semidefinite forcing game, we consider a three-color forcing game that allows us to compute the fractional positive semidefinite forcing number for any graph without playing the $r$-fold game.

Let $G$ be a graph and consider the following \defbf{fractional positive semidefinite forcing game}, which is a three-color forcing game that uses the colors dark blue (target), light blue, and white. Assign to each vertex of $G$ one of these colors and let $\mathcal{B} = (\mathcal{D}, \mathcal{L})$, where $\mathcal{D}$ denotes the set of dark blue vertices and $\mathcal{L}$ denotes the set of light blue vertices.%
\footnote{Recall from Section \ref{HPRY-sec-intro-defs} that this is equivalent to writing $\mathcal{B} = \mathcal{D} \dcup \mathcal{L}$.}
We repeatedly apply the following \defbf{fractional positive semidefinite forcing rule}:

\begin{Definition}[fractional positive semidefinite forcing rule]
Let $\mathcal{B}_t = (\mathcal{D}_t, \mathcal{L}_t)$ denote the set of colored vertices of a graph $G$ at some step of the fractional positive semidefinite forcing process and let $W_1, \ldots, W_h$ denote the sets of vertices of the connected components of $G - \mathcal{D}_t$. If $u \in \left( \mathcal{D}_t \dcup (\mathcal{L}_t \cap W_i) \right)$ and $w \in W_i$ is the only light blue or white neighbor of $u$ in $G[\mathcal{D}_t \cup W_i]$, then $u$ can force $w$, i.e., $w$ can be colored dark blue.
\end{Definition}

Loosely speaking, we apply the disconnect rule from positive semidefinite zero forcing using the dark blue vertices of $G$, and then in each augmented component any dark or light blue vertex can force its only light blue or white neighbor. As usual, the goal of this forcing game is to choose the initial set $\mathcal{B}$ in such a way that by repeated application of this rule the entire graph can be forced (i.e., turned dark blue). If $G$ can be forced, then we say that the initial set $\mathcal{B}$ is a \defbf{fractional positive semidefinite (PSD) forcing set} for $G$. The \defbf{(three-color) fractional positive semidefinite forcing number} of $G$, denoted $\hat{Z}_f^+(G)$, is then defined as
\[ \hat{Z}_f^+(G) = \min\left\{ |\mathcal{D}| : (\mathcal{D}, \mathcal{L}) \text{ is a fractional PSD forcing set for $G$, for some $\mathcal{L}$} \right\}. \]
We say that a fractional PSD forcing set $\mathcal{B} = (\mathcal{D}, \mathcal{L})$ for $G$ is \defbf{optimal} if $|\mathcal{D}| = \hat{Z}_f^+(G)$ and no fractional PSD forcing set for $G$ with $|\mathcal{D}| = \hat{Z}_f^+(G)$ has fewer than $|\mathcal{L}|$ light blue vertices. We use $\ellstarhat^+(G)$ to denote the number of light blue vertices in any optimal fractional PSD forcing set for $G$, i.e., $\ellstarhat^+(G) = |\mathcal{L}|$.

The process of backforcing described for the $r$-fold positive semidefinite forcing game applies to the fractional positive semidefinite forcing game, albeit with a three-color modification. After a light blue vertex $u$ performs a force, all of its neighbors must necessarily be dark blue, and so we can backforce $u$ at the next forcing step.

The observant reader will notice that we have defined ``fractional positive semidefinite forcing number" twice: here, and in Section \ref{HPRY-sec-rfold-PSD-intro}. The final result of this section shows that this is not an error: the parameter $Z_f^+$, defined via an $r$-fold two-color game, is equal to the parameter $\hat{Z}_f^+$, defined via a three-color game.

\begin{Theorem} \label{HPRY-thm-zfp-equals-zhat}
For any graph $G$, $Z_f^+(G) = \hat{Z}_f^+(G)$.
\end{Theorem}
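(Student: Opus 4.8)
The plan is to prove equality by establishing two inequalities, using the characterization $Z_f^+(G) = \astar^+(G)$ from Theorem \ref{HPRY-thm-characterize-zf-P} to replace the fractional $r$-fold quantity with the combinatorial quantity $\astar^+(G)$ (the number of All clusters in an optimal AON $n$-fold PSD forcing set). Concretely, I would show $\hat{Z}_f^+(G) \le \astar^+(G)$ and $\astar^+(G) \le \hat{Z}_f^+(G)$, so that the whole chain collapses to the desired $Z_f^+(G) = \hat{Z}_f^+(G)$. The conceptual heart of the argument is a dictionary between AON structures on $\blowup{G}{r}$ and three-color colorings of $G$: an All cluster $R_u$ corresponds to coloring $u$ dark blue, a One cluster corresponds to coloring $u$ light blue, and a None cluster corresponds to leaving $u$ white.

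For the direction $\hat{Z}_f^+(G) \le \astar^+(G)$, I would start with an optimal AON $n$-fold PSD forcing set $B$ for $G$ (with $n = |G|$, $r = n$), guaranteed by Corollary \ref{HPRY-cor-zr-alt-def} and the definition of $\astar^+$, and form the three-color coloring $\mathcal{B} = (\mathcal{D}, \mathcal{L})$ where $\mathcal{D}$ is the set of vertices whose cluster is All and $\mathcal{L}$ is the set of vertices whose cluster is One. Then $|\mathcal{D}| = a(B) = \astar^+(G)$. The work is to verify that $\mathcal{B}$ is a fractional PSD forcing set, i.e., that the single-cluster forcing process guaranteed by Theorem \ref{HPRY-thm-AON-P} for $r \ge 3$ translates force-by-force into a valid sequence of fractional PSD forces on $G$. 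The key observation is that removing all blue vertices from $\blowup{G}{r}$ leaves components that are unions of clusters, and these project precisely onto the components of $G - \mathcal{D}$; a cluster-forcing step in which some vertex $x \in R_u$ forces a single One or None cluster $R_w$ corresponds to the vertex $u$ (dark or light blue, as appropriate) forcing $w$ in the augmented component $G[\mathcal{D}_t \cup W_i]$, because the One/None distinction guarantees $w$ is the unique light-blue-or-white neighbor of $u$ after disconnection.

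For the reverse direction $\astar^+(G) \le \hat{Z}_f^+(G)$, I would run the dictionary backwards: take an optimal fractional PSD forcing set $\mathcal{B} = (\mathcal{D}, \mathcal{L})$ with $|\mathcal{D}| = \hat{Z}_f^+(G)$, and for a large $r$ (say $r \ge n$) build an AON $r$-fold PSD set by making $R_u$ an All cluster when $u \in \mathcal{D}$, a One cluster when $u \in \mathcal{L}$, and a None cluster otherwise. Each fractional PSD force on $G$ lifts to a cluster-force on $\blowup{G}{r}$: when $u$ forces its unique light-blue-or-white neighbor $w$ within an augmented component, the corresponding vertex in $R_u$ has at most $r$ white neighbors inside the blowup of that component and can force the cluster $R_w$ (invoking backforcing to complete the cluster if $u \in \mathcal{L}$). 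This produces an AON $r$-fold PSD forcing set with $a = |\mathcal{D}|$ All clusters, so by Lemma \ref{HPRY-lem-rfold-AON-lowerbound} (or directly by minimality) $\astar^+(G) \le a = |\mathcal{D}| = \hat{Z}_f^+(G)$.

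The main obstacle I anticipate is making the force-by-force correspondence airtight at the level of the disconnect rule, namely checking that the connected components of $G - \mathcal{D}_t$ really do correspond to the components of $\blowup{G}{r} - B_t$ throughout the process, and that the ``unique light-blue-or-white neighbor'' condition in the three-color rule matches exactly the ``at most $r$ white neighbors, forcing a single cluster'' condition supplied by Theorem \ref{HPRY-thm-AON-P}. The $r = 2$ peculiarity in which two One clusters force simultaneously is precisely why I would work at $r \ge n \ge 3$ (handling small graphs separately if needed) so that Remark \ref{HPRY-rmk-AON-only-one-force} guarantees one cluster per step; this sidesteps the replication pathology of Example \ref{HPRY-ex-2fold-bad-replication} and keeps the bijection between steps clean. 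Once the single-step dictionary is verified in both directions, the two inequalities combine with Theorem \ref{HPRY-thm-characterize-zf-P} to give $Z_f^+(G) = \astar^+(G) = \hat{Z}_f^+(G)$.
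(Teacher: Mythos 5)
Your proposal is correct and takes essentially the same approach as the paper: the identical All/One/None-to-dark-blue/light-blue/white dictionary, converting an optimal AON $n$-fold PSD forcing set into a three-color set to get $\hat{Z}_f^+(G) \le Z_f^+(G)$, and converting an optimal fractional PSD forcing set into an AON set on a large blowup for the reverse inequality, all anchored by Theorem \ref{HPRY-thm-characterize-zf-P}. The only slip is that Lemma \ref{HPRY-lem-rfold-AON-lowerbound} as stated applies to \emph{optimal} AON sets, which your constructed set need not be --- the paper instead invokes Lemma \ref{HPRY-lem-AON-num-alls-large-r-P} at this step --- but your parenthetical fallback ``directly by minimality'' repairs this.
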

\begin{proof}
Let $|G| = n$ and let $B$ be an optimal AON $n$-fold PSD forcing set for $G$. By Theorem \ref{HPRY-thm-characterize-zf-P}, we have $a(B) = \astar^+(G) = Z_f^+(G)$. Color $\blowup{G}{n}$ with $B$ and color $G$ with $\widetilde{\mathcal{B}} = (\widetilde{\mathcal{D}}, \widetilde{\mathcal{L}})$, defined as follows: Let $\widetilde{\mathcal{D}} = \{ u : R_u \text{ is an All cluster in } \blowup{G}{n} \}$ and let $\widetilde{\mathcal{L}} = \{ u : R_u \text{ is a One cluster in } \blowup{G}{n} \}$. Since $B$ is an optimal AON $n$-fold PSD forcing set, exactly one cluster is forced at each step of the forcing process using $B$, and $\blowup{G}{n}$ can be forced. Further, backforcing is applied to One clusters in $\blowup{G}{n}$, and One clusters correspond to light blue vertices, to which backforcing can also be applied. Therefore, the forcing process used on $\blowup{G}{n}$ can be used to force $G$, so $\widetilde{\mathcal{B}}$ is a fractional PSD forcing set for $G$ and $\hat{Z}_f^+(G) \leq |\widetilde{\mathcal{D}}| = a(B) = Z_f^+(G)$.

Let $\mathcal{B} = (\mathcal{D}, \mathcal{L})$ be an optimal fractional PSD forcing set for $G$. The reverse inequality easily follows by associating elements of $\mathcal{D}$ with All clusters in $\blowup{G}{n}$ and elements of $\mathcal{L}$ with One clusters and applying Lemma \ref{HPRY-lem-AON-num-alls-large-r-P} and arguments similar to those above.
\end{proof}

\begin{Corollary}
For any graph $G$, $\ellstar^+(G) = \ellstarhat^+(G)$.
\end{Corollary}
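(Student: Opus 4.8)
The plan is to reuse the correspondence between optimal AON $n$-fold PSD forcing sets and fractional PSD forcing sets (where $n = |G|$) that drives the proof of Theorem \ref{HPRY-thm-zfp-equals-zhat}, but this time to track the number of One clusters against the number of light blue vertices rather than the All clusters against the dark blue vertices. I would prove the two inequalities $\ellstarhat^+(G) \leq \ellstar^+(G)$ and $\ellstar^+(G) \leq \ellstarhat^+(G)$ separately and then combine them.

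For the first inequality I would take an optimal AON $n$-fold PSD forcing set $B$, observing that by Corollary \ref{HPRY-cor-AON-min-same-A-P} together with the minimality of $B$ the values $a(B) = \astar^+(G)$ and $\ell(B) = \ellstar^+(G)$ are forced. The forward construction from the proof of Theorem \ref{HPRY-thm-zfp-equals-zhat} then produces a fractional PSD forcing set $\widetilde{\mathcal{B}} = (\widetilde{\mathcal{D}}, \widetilde{\mathcal{L}})$ in which $\widetilde{\mathcal{D}}$ records the All clusters and $\widetilde{\mathcal{L}}$ records the One clusters, so $|\widetilde{\mathcal{D}}| = \astar^+(G) = \hat{Z}_f^+(G)$ and $|\widetilde{\mathcal{L}}| = \ellstar^+(G)$. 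Since $\widetilde{\mathcal{B}}$ attains the minimum number of dark blue vertices, the definition of $\ellstarhat^+(G)$ as the least number of light blue vertices among such sets yields $\ellstarhat^+(G) \leq \ellstar^+(G)$.

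For the reverse inequality I would begin with an optimal fractional PSD forcing set $\mathcal{B} = (\mathcal{D}, \mathcal{L})$, so that $|\mathcal{D}| = \hat{Z}_f^+(G) = \astar^+(G)$ and $|\mathcal{L}| = \ellstarhat^+(G)$. Reversing the construction (dark blue $\leftrightarrow$ All, light blue $\leftrightarrow$ One, white $\leftrightarrow$ None) colors $\blowup{G}{n}$ and, by the reverse direction of the proof of Theorem \ref{HPRY-thm-zfp-equals-zhat}, gives an AON $n$-fold PSD forcing set $B'$ with $a(B') = |\mathcal{D}| = \astar^+(G)$ and $\ell(B') = |\mathcal{L}| = \ellstarhat^+(G)$, hence $|B'| = n \cdot \astar^+(G) + \ellstarhat^+(G)$. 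Because $B'$ is an $n$-fold PSD forcing set, its cardinality is at least $\zr{n}^+(G)$, which equals $n \cdot \astar^+(G) + \ellstar^+(G)$ by Corollary \ref{HPRY-cor-characterize-zr-large-r-P} applied with $r = n$. Cancelling the common term $n \cdot \astar^+(G)$ gives $\ellstarhat^+(G) \geq \ellstar^+(G)$, and combining the two bounds finishes the argument.

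The main obstacle is the reverse lift in the third paragraph: one must confirm that the three-color forcing process on $G$ genuinely induces a valid $n$-fold forcing process on $\blowup{G}{n}$, so that $B'$ is an honest $n$-fold PSD forcing set and not merely a coloring with the correct cluster counts. This is exactly the ``arguments similar to those above'' deferred at the end of the proof of Theorem \ref{HPRY-thm-zfp-equals-zhat}, and it rests on the fact that with $r = n$ the AON structure forces exactly one cluster per step (Remark \ref{HPRY-rmk-AON-only-one-force} and Theorem \ref{HPRY-thm-AON-P}), with backforcing translating each light blue force into the filling of its One cluster. Once that step-by-step correspondence is established, the counting is routine.
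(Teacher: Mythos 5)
Your proposal is correct and is essentially the paper's intended argument: the paper states this corollary without proof immediately after Theorem \ref{HPRY-thm-zfp-equals-zhat}, relying on exactly the All/dark-blue, One/light-blue correspondence (and its reverse lift via Lemma \ref{HPRY-lem-AON-num-alls-large-r-P} and Corollary \ref{HPRY-cor-characterize-zr-large-r-P}) that you spell out. Your identification of the reverse lift as the step needing verification, and your justification of it via the one-cluster-per-step forcing process with backforcing, matches the ``arguments similar to those above'' deferred in the paper's proof of the theorem.
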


As a consequence of these results, the $\hat{Z}_f^+$ and $\ellstarhat^+$ notations will be suppressed in favor of the simpler $Z_f^+$ and $\ellstar^+$.

In contrast to the process of computing the values of fractional versions of general graph parameters, computing the fractional positive semidefinite forcing number of a graph does not require any explicit knowledge of the $r$-fold analogue. If knowledge of $Z_f^+$ is all that is of interest, one can bypass the $r$-fold game and opt to play the fractional positive semidefinite forcing game instead. The benefit of taking a three-color approach is also demonstrated in Section \ref{HPRY-sec-skew}, where a three-color interpretation is used to obtain new results pertaining to skew zero forcing.

\subsection{Results for fractional positive semidefinite forcing number} \label{HPRY-sec-zfp-results}

The fractional positive semidefinite forcing game allows us to easily prove many interesting properties of the fractional positive semidefinite forcing number.

\begin{Remark}
Any isolated vertex in a graph $G$ must be colored dark blue. Thus if $\delta(G) = 0$, then $Z_f^+(G) \geq \left| \{u \in V(G) : \operatorname{deg}(u) = 0 \} \right| \geq 1$.
\end{Remark}

\begin{Observation}
If a graph $G$ has connected components $\{G_i\}_{i=1}^m$, then $Z_f^+(G) = \sum_{i=1}^m Z_f^+(G_i)$ and $\ellstar^+(G) = \sum_{i=1}^m \ellstar^+(G_i)$.
\end{Observation}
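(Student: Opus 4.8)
The plan is to work entirely within the three-color fractional positive semidefinite forcing game, using Theorem~\ref{HPRY-thm-zfp-equals-zhat} to identify $Z_f^+(G)$ with $\hat{Z}_f^+(G)$ and $\ellstar^+(G)$ with $\ellstarhat^+(G)$. The central structural observation is that the fractional PSD forcing rule never creates interaction between distinct connected components: when we delete the dark blue set $\mathcal{D}_t$ from $G$, every connected component $W_i$ of $G - \mathcal{D}_t$ lies entirely inside a single $G_j$ (since the $G_j$ are the connected components of $G$), and the neighborhood condition ``$w$ is the only light blue or white neighbor of $u$ in $G[\mathcal{D}_t \cup W_i]$'' depends only on vertices of that $G_j$. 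Consequently, playing the game on $G$ from a colored set $\mathcal{B} = (\mathcal{D}, \mathcal{L})$ is exactly the same as playing $m$ independent games, one on each $G_j$ starting from the restriction $\mathcal{B}_j = (\mathcal{D} \cap V(G_j),\, \mathcal{L} \cap V(G_j))$; in particular, $\mathcal{B}$ forces $G$ if and only if each $\mathcal{B}_j$ forces $G_j$.

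Granting this, I would prove $Z_f^+(G) = \sum_{i=1}^m Z_f^+(G_i)$ by a two-sided inequality. For the upper bound, take an optimal fractional PSD forcing set for each $G_i$ and observe that their disjoint union is a fractional PSD forcing set for $G$ whose dark blue count is $\sum_{i} Z_f^+(G_i)$. For the lower bound, take an optimal fractional PSD forcing set $\mathcal{B} = (\mathcal{D}, \mathcal{L})$ for $G$ and restrict it to each component; by the structural observation each restriction forces $G_i$, so $|\mathcal{D} \cap V(G_i)| \geq Z_f^+(G_i)$, and summing over $i$ gives $Z_f^+(G) = |\mathcal{D}| \geq \sum_{i} Z_f^+(G_i)$.

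For the light blue count, once equality for $Z_f^+$ is established, any optimal $\mathcal{B} = (\mathcal{D}, \mathcal{L})$ for $G$ must restrict to a dark-blue-minimum forcing set on each component: each restriction uses at least $Z_f^+(G_i)$ dark blue vertices, and since the quantities $|\mathcal{D} \cap V(G_i)|$ sum to $Z_f^+(G) = \sum_{i} Z_f^+(G_i)$, every restriction uses exactly $Z_f^+(G_i)$ dark blue vertices and is therefore dark-blue-minimum on $G_i$. Because the $m$ games are independent, minimizing $|\mathcal{L}|$ over all dark-blue-minimum colorings of $G$ is the same as independently minimizing the light blue count in each component, which yields $\ellstar^+(G) = \sum_{i=1}^m \ellstar^+(G_i)$.

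I expect the only real work to be the structural observation in the first paragraph, namely verifying that the connected components of $G - \mathcal{D}_t$ refine along the $G_j$ at every step and that the forcing rule is therefore blind to the other components. This is intuitively clear because there are no edges between distinct $G_j$, but it must be checked that the augmented subgraphs $G[\mathcal{D}_t \cup W_i]$ appearing in the rule introduce no cross-component adjacencies; since $W_i \subseteq V(G_j)$ and any dark blue vertex outside $G_j$ has no neighbor in $W_i$, the rule reduces cleanly to the game played on $G_j$ alone.
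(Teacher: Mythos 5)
Your proposal is correct, and it is essentially the argument the paper has in mind: the statement is given as an unproved Observation precisely because the fractional PSD forcing game decomposes component-wise (no force can cross between components, and restricting/uniting colorings preserves the forcing property), which is exactly the structural fact you verify before running the standard two-sided counting argument for both $Z_f^+$ and $\ellstar^+$.
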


Thus we are able to focus on connected graphs (as is customary for zero forcing).

\begin{Remark} \label{HPRY-rmk-frac-plus-ellstar-bds-P}
Let $G$ be a graph and let $\mathcal{B} = (\mathcal{D}, \mathcal{L})$ be a fractional PSD forcing set for $G$. The set $B = \mathcal{D} \dcup \mathcal{L}$ is a positive semidefinite zero forcing set for $G$, so $Z^+(G) \leq |B| = |\mathcal{D}| + |\mathcal{L}|$. If $\mathcal{B}$ is optimal, then this shows that $Z^+(G) \leq Z_f^+(G) + \ellstar^+(G)$.
\end{Remark}

A natural question in light of this remark is whether $Z^+(G) = Z_f^+(G) + \ellstar^+(G)$. By taking a minimum positive semidefinite zero forcing set for $G$ and changing some vertices to light blue, it may be possible to obtain an optimal fractional PSD forcing set for $G$. Even though this works for some graphs, the next example provides a graph for which this technique fails.

\begin{Example} \label{HPRY-ex-Kmn-ctrex}
Let $G$ be the graph shown in Example \ref{HPRY-ex-not-AON-after-replication}, a $K_{5,2}$ with one leaf appended to each vertex in the partite set on 5 vertices. By coloring each of the leaves light blue, we can force each of their neighbors, and using the disconnect rule we can subsequently backforce the leaves and force all of $G$. Thus $Z_f^+(G) = 0$ and $\ellstar^+(G) = 5$, but it is known that $Z^+(G) = 2 < 0 + 5 = Z_f^+(G) + \ellstar^+(G)$. The key to this example is that the set $B = \mathcal{L}$ is a minimal positive semidefinite zero forcing set for $G$, but it is not a minimum positive semidefinite zero forcing set.
\end{Example}

\begin{Remark} \label{HPRY-rmk-light-blue-P}
If $\mathcal{B} = (\mathcal{D}, \mathcal{L})$ is an optimal fractional PSD forcing set for a connected graph $G$, then any vertex that is colored light blue must perform a force before it is itself forced; if not, then that vertex can be colored white to obtain a fractional PSD forcing set with the same number of dark blue vertices and fewer light blue vertices, contradicting the optimality of $\mathcal{B}$. Additionally, no two light blue vertices in an optimal fractional PSD forcing set can be adjacent, as one would have to force the other before the other has performed a force. Therefore, $\mathcal{L}$ is an independent set in $G$, so $\ellstar^+(G) \leq \alpha(G)$.
\end{Remark}

The following result pertains to the (two-color) positive semidefinite zero forcing game.

\begin{Lemma}[\cite{HPRY-travisThesis}, Lemma 2.1.1] \label{HPRY-lemma-travis-swap}
Let $G$ be a graph and let $B$ be a positive semidefinite zero forcing set of $G$. If $v \in B$ is the vertex that performs the first force, $v \force w$, where $w$ is a white neighbor of $v$, then $(B \setminus \{v\}) \cup \{w\}$ is a positive semidefinite zero forcing set of $G$.
\end{Lemma}

We now present a three-color version of Lemma \ref{HPRY-lemma-travis-swap}. The proof is similar to the proof of the two-color version found in \cite{HPRY-travisThesis} and is omitted.

\begin{Lemma} \label{HPRY-lemma-travis-3color}
Let $G$ be a graph and let $\mathcal{B} = (\mathcal{D}, \mathcal{L})$ be a fractional PSD forcing set for $G$. Suppose that the first force, $v \force w$, is performed by some $v \in \mathcal{D}$ on some $w \notin \mathcal{L}$. Let $\widetilde{\mathcal{D}} = \left( \mathcal{D} \setminus \{v\} \right) \cup \{w\}$. Then $\widetilde{\mathcal{B}} = (\widetilde{\mathcal{D}}, \mathcal{L})$ is also a fractional PSD forcing set for $G$.
\end{Lemma}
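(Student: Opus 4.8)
The plan is to exhibit, in the recolored graph under $\widetilde{\mathcal{B}}$, a single first force $w \force v$ that lands us in exactly the coloring produced by the original first force $v \force w$. Note first that $w$ is white under $\mathcal{B}$ (being forced, it is light blue or white, and by hypothesis $w \notin \mathcal{L}$) and that $v$ is white under $\widetilde{\mathcal{B}}$ (since $v \in \mathcal{D}$ implies $v \notin \mathcal{L}$); thus the light blue set equals $\mathcal{L}$ in both colorings, and only $v$ and $w$ trade the colors white and dark blue. If $w \force v$ is legal under $\widetilde{\mathcal{B}}$, then after performing it the dark blue set is $(\mathcal{D} \setminus \{v\}) \cup \{w\} \cup \{v\} = \mathcal{D} \cup \{w\}$ and the light blue set is $\mathcal{L}$ --- precisely the coloring reached immediately after $v \force w$ in the original process. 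Because the fractional PSD forcing rule depends only on the current dark blue and light blue sets (and on $G$), every later force in a chronological list for $\mathcal{B}$ is then legal for $\widetilde{\mathcal{B}}$, so $G$ is forced and $\widetilde{\mathcal{B}}$ is a fractional PSD forcing set.

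The crux, and the main obstacle, is verifying that $w \force v$ is legal under $\widetilde{\mathcal{B}}$. Let $W_1, \dots, W_h$ be the components of $G - \mathcal{D}$, with $w \in W_i$. Legality of the original force $v \force w$ says that $w$ is the unique light blue or white neighbor of $v$ in $G[\mathcal{D} \cup W_i]$; in particular $v$ has no neighbor in $W_i \setminus \{w\}$. Now examine $G - \widetilde{\mathcal{D}}$, whose vertex set is $\bigl(\bigcup_j W_j \setminus \{w\}\bigr) \cup \{v\}$. Since $v$'s only $W_i$-neighbor was $w$, in $G - \widetilde{\mathcal{D}}$ the vertex $v$ is adjacent only to vertices of $\bigcup_{j \neq i} W_j$; hence the component $\widetilde{W}$ of $G - \widetilde{\mathcal{D}}$ containing $v$ is disjoint from $W_i \setminus \{w\}$. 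On the other hand, every neighbor of $w$ lies in $\mathcal{D} \cup (W_i \setminus \{w\})$, and each such neighbor is either dark blue (those in $\mathcal{D} \setminus \{v\}$, now in $\widetilde{\mathcal{D}}$), equal to $v$, or lies in $W_i \setminus \{w\}$, a set disjoint from $\widetilde{W}$. Thus within $G[\widetilde{\mathcal{D}} \cup \widetilde{W}]$ the unique light blue or white neighbor of the dark blue vertex $w$ is $v$, so $w \force v$ is legal.

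I expect the component bookkeeping above to be the only delicate point: one must rule out that deleting the forced vertex $w$ while recoloring the forcer $v$ white could reconnect $v$ to the remnants $W_i \setminus \{w\}$, and this is exactly what the uniqueness clause of the original force prevents. With $w \force v$ justified, appending the tail of the original chronological list finishes the argument, in direct parallel with the two-color reasoning behind Lemma \ref{HPRY-lemma-travis-swap}.
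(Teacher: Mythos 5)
Your proof is correct, and it takes essentially the approach the paper intends: the paper omits this proof, citing its similarity to the two-color swap argument of Lemma \ref{HPRY-lemma-travis-swap}, and your argument --- verifying that $w \force v$ is legal under $\widetilde{\mathcal{B}}$ via the component bookkeeping, then replaying the original chronological list from the now-identical coloring $(\mathcal{D} \cup \{w\}, \mathcal{L})$ --- is precisely the three-color adaptation of that swap argument. The key step you isolate (that $v$'s component in $G - \widetilde{\mathcal{D}}$ is disjoint from $W_i \setminus \{w\}$, because the uniqueness clause of the original force leaves $v$ with no neighbor in $W_i \setminus \{w\}$) is exactly the delicate point, and your handling of it is sound.
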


\begin{Theorem} \label{HPRY-thm-first-force-light-blue-P}
If $G$ is a graph with at least one edge, then $G$ has an optimal fractional PSD forcing set with which the first force can be performed by a light blue vertex.
\end{Theorem}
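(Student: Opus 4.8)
The plan is to start from an arbitrary optimal fractional PSD forcing set $\mathcal{B}=(\mathcal{D},\mathcal{L})$ together with a forcing process, examine the first force $v\force w$, and show that if it is not already performed by a light blue vertex then $\mathcal{B}$ can be replaced by another optimal set whose first light blue force occurs one step sooner. Iterating this (or, equivalently, running an extremal argument) then pushes a light blue force all the way to the front. The engine of the argument is the three-color swap lemma (Lemma~\ref{HPRY-lemma-travis-3color}).

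First I would dispose of the routine reductions. If $v\in\mathcal{L}$ we are already done, so assume $v\in\mathcal{D}$. By Remark~\ref{HPRY-rmk-light-blue-P}, $w$ cannot be light blue: a light blue vertex that is forced before it itself performs a force could be recolored white, producing a fractional PSD forcing set with the same number of dark blue vertices and strictly fewer light blue vertices, contradicting optimality. Hence $w$ is white, so Lemma~\ref{HPRY-lemma-travis-3color} applies and $\widetilde{\mathcal{B}}=((\mathcal{D}\setminus\{v\})\cup\{w\},\mathcal{L})$ is again a fractional PSD forcing set with $|\widetilde{\mathcal{D}}|=|\mathcal{D}|=\hat{Z}_f^+(G)$ and the same light blue set, hence again optimal. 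The idea is that in $\widetilde{\mathcal{B}}$ the vertex $w$ now starts dark blue, so the leading (dark blue) force $v\force w$ of the old process has been excised from the front; one then aims to run the remaining forces of the old list on $\widetilde{\mathcal{B}}$, back-forcing $v$ once all of its neighbors have become dark blue, so that every subsequent force, and in particular the first force performed by a light blue vertex, occurs one step earlier than before.

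To make this precise I would argue by contradiction using a monovariant: among all optimal sets and all their chronological lists of forces, choose one minimizing the index $t^\ast$ of the first force performed by a light blue vertex. If $t^\ast\ge 2$, the swap above yields an optimal set and process whose first light blue force occurs at step $t^\ast-1$, contradicting minimality; hence $t^\ast=1$, which is exactly the assertion. For this scheme to be meaningful I must know that every optimal set contains at least one light blue vertex, i.e. $\ellstar^+(G)\ge 1$, and this is where the hypothesis that $G$ has an edge enters. Indeed, if some optimal set had $\mathcal{L}=\emptyset$, then $\mathcal{D}$ would be a positive semidefinite zero forcing set with $|\mathcal{D}|=Z_f^+(G)\le Z^+(G)\le |G|-1$; I would swap forward until the first force $v\force w$ no longer needs the disconnect rule (so that $w$ is the unique white neighbor of $v$ in all of $G-\mathcal{D}$), and then recolor $v$ light blue, which still forces $w$ and is then back-forced, giving a forcing set with one fewer dark blue vertex and contradicting the minimality of $\hat{Z}_f^+(G)$.

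The hard part will be the precise control of the forcing process under the swap. Lemma~\ref{HPRY-lemma-travis-3color} only guarantees that $\widetilde{\mathcal{B}}$ forces $G$; what I actually need is the sharper statement that deleting $v$ from the initial dark blue set merely removes the first force and advances the first light blue force, even though removing $v$ may merge components that the disconnect rule had separated at $v$ and may delay the moment at which $v$ itself becomes dark blue (since in $\widetilde{\mathcal{B}}$ the vertex $v$ is white and must be forced rather than present from the start). The delicate sub-case is the degenerate one in which $w$ has no white neighbor inside its own component of $G-\mathcal{D}$, where the naive shift of the process can be obstructed; handling this case, together with the reduction establishing $\ellstar^+(G)\ge 1$, is where the real work lies, while the use of Remark~\ref{HPRY-rmk-light-blue-P} and the no-disconnect recoloring are the easy ingredients.
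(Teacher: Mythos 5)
Your proposal assembles the right ingredients (Lemma~\ref{HPRY-lemma-travis-3color}, Remark~\ref{HPRY-rmk-light-blue-P}, an extremal setup and contradiction), but its engine does not turn over: the swap does not decrease your monovariant. The natural process witnessing Lemma~\ref{HPRY-lemma-travis-3color} for $\widetilde{\mathcal{B}}=((\mathcal{D}\setminus\{v\})\cup\{w\},\mathcal{L})$ begins with the \emph{reversed} force $w\force v$ and then replays $f_2,f_3,\dots$; in that process the first light blue force still occurs at step $t^{\ast}$, so minimality of $t^{\ast}$ is not contradicted, and iterating your swap simply toggles between $\mathcal{B}$ and $\widetilde{\mathcal{B}}$. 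To reach $t^{\ast}-1$ you need the ``excision'' claim --- run $f_2,f_3,\dots$ from $\widetilde{\mathcal{B}}$ while $v$ is still white --- and that claim fails in general: $v$ may itself perform later forces in the old list (a white vertex cannot force in this game), and a white $v$ merges components of $G-\mathcal{D}_t$ that the disconnect rule had kept separate, so a force $x\force y$ with $x$ adjacent to $v$ and $y$ in a component now touching $v$ is blocked until $v$ is forced, which the shifted list need never accomplish. You flag exactly this as ``where the real work lies,'' but resolving it is the entire content of the theorem; nothing in your outline supplies it.

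The missing idea is the paper's choice of monovariant. The paper first observes that if an optimal set's first force can be done without the disconnect rule, then it must be performed by a light blue vertex (otherwise recoloring the forcing vertex light blue contradicts minimality of $|\mathcal{D}|$), so under the contradiction hypothesis every optimal set needs the disconnect rule for its first force. It then chooses, among all optimal sets, one minimizing $|W_1|$, the size of the smallest component of $G-\mathcal{D}$, with the first force $v\force w$, $w\in W_1$. Under the swap, $v$ leaves $\mathcal{D}$ and joins a different component, while $W_1\setminus\{w\}$ breaks into strictly smaller pieces; so either $W_1\neq\{w\}$ and $G-\widetilde{\mathcal{D}}$ has a component smaller than $|W_1|$ (contradicting the extremal choice), or $W_1=\{w\}$ and $\widetilde{\mathcal{B}}$ admits the no-disconnect first force $w\force v$, contradicting the first observation. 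That quantity provably decreases under the swap, which is precisely what your time-based monovariant fails to do. The same gap infects your side-argument for $\ellstar^+(G)\ge 1$: ``swap forward until the first force no longer needs the disconnect rule'' has no progress guarantee without the component-size argument, and you cannot instead cite Corollary~\ref{HPRY-cor-ellstar-with-edge-P}, since in the paper that corollary is itself a consequence of this theorem.
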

\begin{proof}
Suppose for the sake of contradiction that $G$ does not have an optimal fractional PSD forcing set with which the first force can be performed by a light blue vertex. Note that if the first force with an optimal set can be done without using the disconnect rule, then this force must be done by a light blue vertex (else the set is not optimal), so our assumption implies that the disconnect rule must be applied to perform the first force with any optimal fractional PSD forcing set. Let $\mathcal{B} = (\mathcal{D}, \mathcal{L})$ be an optimal fractional PSD forcing set such that $|W_1|$ is minimum, where $W_1, W_2, \ldots, W_h$ are the sets of vertices of the connected components of $G - \mathcal{D}$ and $|W_1| \leq |W_2| \leq \cdots \leq |W_h|$. As noted in Remark 1.14 of \cite{HPRY-nathanThesis}, we can assume that the first vertex forced lies in $W_1$. Let $v \force w$ denote the first force, where $v \in \mathcal{D}$ and $w \in W_1$.

By Lemma \ref{HPRY-lemma-travis-3color}, the set $\widetilde{\mathcal{B}} = (\widetilde{\mathcal{D}}, \mathcal{L})$ with $\widetilde{\mathcal{D}} = \left( \mathcal{D} \setminus \{v\} \right) \cup \{w\}$ is also an optimal fractional PSD forcing set for $G$. Since $w$ must be the only non-dark-blue neighbor of $v$ in $W_1$, it must be the case that $v$ joins a component other than $W_1$ in $G - \widetilde{\mathcal{D}}$; further, in $G - \widetilde{\mathcal{D}}$, the component $W_1$ will not contain the vertex $w$, and may split into multiple smaller components. If $W_1 \neq \{w\}$, then this argument shows that there must be a component with fewer than $|W_1|$ vertices in $G - \widetilde{\mathcal{D}}$, which contradicts the choice of $\mathcal{B}$; thus we must have $W_1 = \{w\}$. However, the first force in $G$ using $\widetilde{\mathcal{B}}$ can therefore be chosen as $w \force v$, which can be done without applying the disconnect rule; by the comments above, $w$ can thus be light blue, contradicting optimality of $\mathcal{B}$. Therefore, $G$ must have an optimal fractional PSD forcing set with which the first force can be performed by a light blue vertex.
\end{proof}

Theorem \ref{HPRY-thm-first-force-light-blue-P} yields a lower bound on $Z_f^+(G)$ as a corollary.

\begin{Corollary} \label{HPRY-cor-zfp-geq-deltaM1}
For any graph $G$, $\delta(G) - 1 \leq Z_f^+(G)$.
\end{Corollary}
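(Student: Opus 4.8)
The plan is to apply Theorem \ref{HPRY-thm-first-force-light-blue-P} and then simply read off the consequence of the fractional PSD forcing rule at the very first force. First I would dispose of the trivial case: if $G$ has no edges, then $\delta(G) = 0$, so the claimed bound reads $-1 \le Z_f^+(G)$, which holds since $Z_f^+(G) \ge 0$. Hence we may assume $G$ has at least one edge and invoke Theorem \ref{HPRY-thm-first-force-light-blue-P} to obtain an optimal fractional PSD forcing set $\mathcal{B} = (\mathcal{D}, \mathcal{L})$ for which the first force is performed by a light blue vertex, say $u \in \mathcal{L}$.

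Next I would unpack the forcing rule at this first force. Write $W_1, \dots, W_h$ for the vertex sets of the connected components of $G - \mathcal{D}$, let $W_i$ be the component containing $u$, and say $u$ forces $w$. By the fractional positive semidefinite forcing rule, $w$ must be the only light blue or white neighbor of $u$ in $G[\mathcal{D} \cup W_i]$. The key observation is that every non-dark-blue neighbor of $u$ already lies in $W_i$: such a neighbor is adjacent to $u$ and is itself outside $\mathcal{D}$, so it sits in the same component of $G - \mathcal{D}$ as $u$. Consequently $w$ is in fact the unique non-dark-blue neighbor of $u$ in all of $G$, and each of the remaining $\operatorname{deg}(u) - 1$ neighbors of $u$ belongs to $\mathcal{D}$.

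Finally I would count. The previous step gives $|N(u) \cap \mathcal{D}| = \operatorname{deg}(u) - 1$, so that
\[ Z_f^+(G) = |\mathcal{D}| \ge |N(u) \cap \mathcal{D}| = \operatorname{deg}(u) - 1 \ge \delta(G) - 1, \]
which is the desired inequality.

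The argument is short once Theorem \ref{HPRY-thm-first-force-light-blue-P} is in hand, and I expect the only real (small) obstacle to be the second step: one must argue carefully that all non-dark-blue neighbors of $u$ lie in the single component $W_i$, so that the purely local statement of the forcing rule (``$w$ is the only light blue or white neighbor of $u$ in $G[\mathcal{D} \cup W_i]$'') upgrades to the global statement that $w$ is the \emph{only} non-dark-blue neighbor of $u$ in $G$. I would also be careful to treat the edgeless graph separately at the outset, since Theorem \ref{HPRY-thm-first-force-light-blue-P} is stated only for graphs with at least one edge.
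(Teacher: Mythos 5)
Your proof is correct and takes essentially the same route as the paper's: dispose of the trivial case, invoke Theorem \ref{HPRY-thm-first-force-light-blue-P} to get an optimal set whose first force is performed by a light blue vertex $u$, and then count that all but one neighbor of $u$ must be dark blue, giving $|\mathcal{D}| \geq \deg(u) - 1 \geq \delta(G) - 1$. The only cosmetic difference is that you justify ``all non-dark-blue neighbors of $u$ lie in a single component of $G - \mathcal{D}$'' directly from $u \notin \mathcal{D}$, whereas the paper additionally cites Remark \ref{HPRY-rmk-light-blue-P} to rule out light blue neighbors; both arguments are sound and yield the same count.
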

\begin{proof}
The result is trivial for $\delta(G) \leq 1$. If $\delta(G) \geq 2$, then $G$ has an edge, so by Theorem \ref{HPRY-thm-first-force-light-blue-P} there exists some optimal fractional PSD forcing set $\mathcal{B} = (\mathcal{D}, \mathcal{L})$ such that the first force in $G$ can be done by some $u \in \mathcal{L}$. Remark \ref{HPRY-rmk-light-blue-P} asserts that $u$ has no light blue neighbors, and all white neighbors of $u$ must be in the same component of $G - \mathcal{D}$. Since $u$ can force, all but one of its neighbors must be dark blue. Thus $|\mathcal{D}| \geq |N(u)| - 1 \geq \delta(G) - 1$.
\end{proof}

An additional corollary to Theorem \ref{HPRY-thm-first-force-light-blue-P} gives a lower bound on $\ellstar^+(G)$ in the case where $G$ has at least one edge.

\begin{Corollary} \label{HPRY-cor-ellstar-with-edge-P}
If $G$ is a graph with at least one edge, then $\ellstar^+(G) \geq 1$.
\end{Corollary}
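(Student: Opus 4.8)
The plan is to read this off almost immediately from Theorem~\ref{HPRY-thm-first-force-light-blue-P}. Since $G$ has at least one edge, that theorem guarantees the existence of an optimal fractional PSD forcing set $\mathcal{B} = (\mathcal{D}, \mathcal{L})$ with which the first force can be performed by a light blue vertex. The crucial point is that the set provided by the theorem is \emph{optimal}, not merely some forcing set; this is exactly what lets us transfer information about $|\mathcal{L}|$ into a statement about $\ellstar^+(G)$.

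First I would observe that if a light blue vertex performs a force, then there must exist at least one light blue vertex, i.e. $\mathcal{L} \neq \emptyset$, so $|\mathcal{L}| \geq 1$. This is the only substantive content, and it is essentially trivial once the theorem is in hand: a vertex cannot be the forcer unless it belongs to $\mathcal{D} \dcup \mathcal{L}$, and here it is specified to be light blue.

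Next I would invoke the definition of $\ellstar^+(G)$. By the optimality convention introduced before Theorem~\ref{HPRY-thm-zfp-equals-zhat}, every optimal fractional PSD forcing set has the same number of light blue vertices, namely $\ellstar^+(G)$. In particular, the optimal set $\mathcal{B}$ furnished above satisfies $|\mathcal{L}| = \ellstar^+(G)$. Combining this with $|\mathcal{L}| \geq 1$ yields $\ellstar^+(G) \geq 1$, which is the claim.

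There is no real obstacle here; the entire weight of the argument rests on Theorem~\ref{HPRY-thm-first-force-light-blue-P}, and the only care needed is to notice that its conclusion produces an \emph{optimal} set, so that the single light blue forcer it exhibits is counted by $\ellstar^+$ rather than by the light blue total of some arbitrary suboptimal set. I would keep the write-up to two or three sentences.
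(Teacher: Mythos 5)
Your proposal is correct and matches the paper's intended argument exactly: the paper offers no written proof, presenting the result as an immediate consequence of Theorem~\ref{HPRY-thm-first-force-light-blue-P}, and your deduction---the theorem furnishes an \emph{optimal} set whose first force is performed by a light blue vertex, so that set has $|\mathcal{L}| \geq 1$, and by definition $\ellstar^+(G) = |\mathcal{L}|$ for any optimal set---is precisely that intended reasoning. Your emphasis on optimality being the key point (so the light blue forcer is counted by $\ellstar^+$) is exactly right.
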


The following result is a two-color analogue of Theorem \ref{HPRY-thm-first-force-light-blue-P} that applies to the positive semidefinite zero forcing game. The proof is similar to that of Theorem \ref{HPRY-thm-first-force-light-blue-P} and is omitted.

\begin{Theorem} \label{HPRY-thm-zfp-pivoting}
If $G$ is a graph with at least one edge, then there exists a minimum positive semidefinite zero forcing set for $G$ such the first force can be done without using the disconnect rule.
\end{Theorem}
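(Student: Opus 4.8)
The plan is to mimic the proof of Theorem \ref{HPRY-thm-first-force-light-blue-P}, replacing the three-color swap (Lemma \ref{HPRY-lemma-travis-3color}) with its two-color counterpart (Lemma \ref{HPRY-lemma-travis-swap}) and using a ``smallest component'' quantity as the potential that drives an extremal argument. First I would suppose for contradiction that every minimum positive semidefinite zero forcing set for $G$ requires the disconnect rule to perform its first force; equivalently, for every such set $B$, no blue vertex has exactly one white neighbor in all of $G$, so $G - B$ has at least two connected components. Among all minimum positive semidefinite zero forcing sets I would choose $B$ so that $|W_1|$ is as small as possible, where $W_1, \ldots, W_h$ are the vertex sets of the components of $G - B$ ordered by $|W_1| \le \cdots \le |W_h|$. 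As in the proof of Theorem \ref{HPRY-thm-first-force-light-blue-P} (invoking Remark 1.14 of \cite{HPRY-nathanThesis}), I would assume the first forced vertex lies in $W_1$, so the first force is $v \force w$ with $v \in B$ and $w \in W_1$, where necessarily $N(v) \cap W_1 = \{w\}$.

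By Lemma \ref{HPRY-lemma-travis-swap}, the swapped set $\widetilde{B} = (B \setminus \{v\}) \cup \{w\}$ is again a minimum positive semidefinite zero forcing set, so by the standing assumption it too requires the disconnect rule for its first force. The first real step is to analyze the components of $G - \widetilde{B}$: because $N(v) \cap W_1 = \{w\}$, the newly white vertex $v$ is not adjacent to any vertex of $W_1 \setminus \{w\}$, so $v$ cannot attach to the remnant of $W_1$; meanwhile $w$ (now blue) is deleted from $W_1$, so $W_1 \setminus \{w\}$ survives in $G - \widetilde{B}$ as one or more components, each of size at most $|W_1| - 1$. Hence if $W_1 \neq \{w\}$, then $G - \widetilde{B}$ has a component of size strictly less than $|W_1|$, contradicting the minimality in the choice of $B$. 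Therefore $W_1 = \{w\}$.

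Finally, $W_1 = \{w\}$ means $w$ is isolated in $G - B$, so every neighbor of $w$ lies in $B$; in particular $v \in B$ is a neighbor of $w$. After the swap only $v$ leaves the blue set, so under $\widetilde{B}$ the vertex $w$ is blue, $v$ is white, and every other neighbor of $w$ remains blue. Thus $v$ is the unique white neighbor of $w$ in all of $G$, and the standard rule lets $w$ force $v$ without any disconnection. This exhibits a first force for $\widetilde{B}$ performed without the disconnect rule, contradicting the assumption on $\widetilde{B}$, which would complete the argument.

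The hard part will be the component-size bookkeeping in $G - \widetilde{B}$: one must verify that deleting $w$ and inserting $v$ can only shrink the smallest component and can never merge $W_1 \setminus \{w\}$ back up to size $|W_1|$ or larger (this is exactly where $N(v)\cap W_1 = \{w\}$ is used), so that $|W_1|$ is a genuinely strictly decreasing potential forcing the reduction to $W_1 = \{w\}$. The concluding ``no-disconnect'' claim is the other point needing care, since it relies on $W_1 = \{w\}$ being isolated in $G - B$ so that all neighbors of $w$ other than $v$ stay blue after the swap.
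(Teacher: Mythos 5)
Your proof is correct and is essentially the paper's own argument: the paper omits the proof, saying only that it is similar to that of Theorem \ref{HPRY-thm-first-force-light-blue-P}, and your adaptation---using Lemma \ref{HPRY-lemma-travis-swap} in place of Lemma \ref{HPRY-lemma-travis-3color}, minimizing $|W_1|$ over minimum positive semidefinite zero forcing sets, and concluding $W_1 = \{w\}$ so that $w \force v$ requires no disconnection---is exactly the intended adaptation. The only (correctly handled) difference is that your final contradiction is with the standing ``every minimum set needs the disconnect rule'' assumption rather than with optimality of a light blue vertex count, as befits the two-color setting.
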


With Theorem \ref{HPRY-thm-zfp-pivoting}, we can obtain an improved upper bound on $Z_f^+(G)$.

\begin{Corollary} \label{HPRY-cor-zfp-leq-zpminus1}
For any graph $G$ with at least one edge, $Z_f^+(G) \leq Z^+(G) - 1$.
\end{Corollary}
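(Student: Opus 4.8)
The plan is to convert a suitable minimum positive semidefinite zero forcing set into a fractional PSD forcing set with one fewer dark blue vertex, by recoloring the vertex that performs the first force from dark blue to light blue. First I would invoke Theorem \ref{HPRY-thm-zfp-pivoting} to choose a minimum positive semidefinite zero forcing set $B$, with $|B| = Z^+(G)$, for which the first force $v \force w$ can be carried out without the disconnect rule. The crucial consequence of avoiding the disconnect rule is that $v$ has exactly one white neighbor in $G$, namely $w$; every other neighbor of $v$ lies in $B \setminus \{v\}$.

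Next I would set $\mathcal{D} = B \setminus \{v\}$ and $\mathcal{L} = \{v\}$, so that $|\mathcal{D}| = Z^+(G) - 1$, and argue that $\mathcal{B} = (\mathcal{D}, \mathcal{L})$ is a fractional PSD forcing set. Since $v$ and $w$ are adjacent and neither lies in $\mathcal{D}$, they occupy the same component $W_i$ of $G - \mathcal{D}$; and because every neighbor of $v$ other than $w$ is dark blue, $w$ is the unique light-blue-or-white neighbor of $v$ in $G[\mathcal{D} \cup W_i]$. Hence the light blue vertex $v$ can perform the first force $v \force w$, after which the dark blue set is $(B \setminus \{v\}) \cup \{w\}$ and $v$ remains light blue. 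All neighbors of $v$ are now dark blue, so backforcing colors $v$ dark blue, leaving the dark blue set equal to $B \cup \{w\}$ with no light blue vertices remaining.

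From this configuration I would finish by observing that the fractional PSD forcing rule, restricted to a coloring with no light blue vertices, is precisely the two-color positive semidefinite zero forcing rule. Since $B$ is a positive semidefinite zero forcing set (equivalently, $(B \setminus \{v\}) \cup \{w\}$ is one by Lemma \ref{HPRY-lemma-travis-swap}), the remaining forces of the original two-color process complete the forcing of $G$ inside the fractional game. Therefore $\mathcal{B}$ is a fractional PSD forcing set with $|\mathcal{D}| = Z^+(G) - 1$, and by Theorem \ref{HPRY-thm-zfp-equals-zhat} this yields $Z_f^+(G) \leq Z^+(G) - 1$.

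The main obstacle is verifying that $v$ can genuinely perform the first force once it is recolored light blue: removing $v$ from the (dark) blue set can merge components of $G - B$, so a priori $v$ might acquire additional white neighbors in its new component $W_i$. This is exactly where the ``no disconnect rule'' property supplied by Theorem \ref{HPRY-thm-zfp-pivoting} is indispensable, as it guarantees that $v$ has a single white neighbor in all of $G$ and hence cannot pick up extra uncolored neighbors after the merge.
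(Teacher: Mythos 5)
Your proposal is correct and takes essentially the same approach as the paper: invoke Theorem \ref{HPRY-thm-zfp-pivoting} to get a minimum positive semidefinite zero forcing set whose first force avoids the disconnect rule, recolor the forcing vertex light blue, and conclude that the resulting three-color set is a fractional PSD forcing set with $Z^+(G)-1$ dark blue vertices. The paper simply asserts this last step, whereas you spell out the verification (the forcing vertex has a unique white neighbor in all of $G$, backforcing recovers it, and the remaining two-color forces carry over), which is exactly the justification the paper leaves implicit.
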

\begin{proof}
Theorem \ref{HPRY-thm-zfp-pivoting} ensures that there is some minimum positive semidefinite zero forcing set $B$ such that the first force using $B$ can be done without using the disconnect rule. If $\mathcal{B}$ is obtained by coloring the vertex that performs this first force light blue and all of the other vertices in $B$ dark blue, then $\mathcal{B}$ is a fractional PSD forcing set with $Z^+(G) - 1$ dark blue vertices.
\end{proof}

\subsection{Fractional positive semidefinite forcing numbers for graph families}

In this section, we determine the fractional PSD forcing numbers for certain graph families, illustrating the utility of some of the results in Section \ref{HPRY-sec-zfp-results}.

\begin{Example}
Let $n \geq 2$ and let $V(K_n) = \{v_1, v_2, \ldots, v_n\}$. Note that $Z^+(K_n) = n-1$ \cite[Example 46.4.2]{HPRY-HLA2ch46}. Applying Corollaries \ref{HPRY-cor-zfp-geq-deltaM1} and \ref{HPRY-cor-zfp-leq-zpminus1}, $n-2 = \delta(K_n) - 1 \leq Z_f^+(K_n) \leq Z^+(K_n) - 1 = n-2$ and thus equality holds. By Corollary \ref{HPRY-cor-ellstar-with-edge-P}, $\ellstar^+(K_n) \geq 1$. The set $\mathcal{B} = (\{v_1, v_2, \ldots, v_{n-2}\}, \{v_{n-1}\})$ is an optimal fractional PSD forcing set for $K_n$, so $Z_f^+(K_n) = n-2$ and $\ellstar^+(K_n) = 1$.
\end{Example}

In each of the next four examples, optimality of the exhibited fractional PSD forcing sets is obtained by application of Corollaries \ref{HPRY-cor-zfp-geq-deltaM1} and \ref{HPRY-cor-ellstar-with-edge-P}.

\begin{Example}
For any $n \geq 2$, the set $\mathcal{B} = (\emptyset, \{v_1\})$ is an optimal fractional PSD forcing set for $P_n$, where $V(P_n) = \{v_1, v_2, \ldots, v_n\}$ in path order, so $Z_f^+(P_n) = 0$ and $\ellstar^+(P_n) = 1$.
\end{Example}

\begin{Example}
For any $n \geq 3$, the set $\mathcal{B} = (\{v_1\}, \{v_2\})$ is an optimal fractional PSD forcing set for $C_n$, where $V(C_n) = \{v_1, v_2, \ldots, v_n\}$ in cycle order, so $Z_f^+(C_n) = 1$ and $\ellstar^+(C_n) = 1$.
\end{Example}

\begin{Example}
Let $n \geq 4$ and consider the wheel on $n$ vertices, $W_n$, which is obtained by adding a vertex $w$ adjacent to every vertex of $C_{n-1}$. If $\mathcal{B} = (\mathcal{D},\mathcal{L})$ is any optimal fractional PSD forcing set for $C_{n-1}$, then $\widetilde{\mathcal{B}} = (\mathcal{D} \cup \{w\}, \mathcal{L})$ is an optimal fractional PSD forcing set for $W_n$, so $Z_f^+(W_n) = 2$ and $\ellstar^+(W_n) = 1$.
\end{Example}

\begin{Example}
Let $p \geq q \geq 1$ and consider $K_{p,q}$, the complete bipartite graph on partite sets $P$ and $Q$ with $|P| = p$ and $|Q| = q$. Let $\mathcal{D}$ be a set containing any $(q-1)$ elements of $Q$ and let $\mathcal{L}$ be a set containing any one element of $P$; then $\mathcal{B} = (\mathcal{D}, \mathcal{L})$ is an optimal fractional PSD forcing set for $K_{p,q}$, so $Z_f^+(K_{p,q}) = q-1$ and $\ellstar^+(K_{p,q}) = 1$.
\end{Example}

As a final example, we consider the fractional PSD forcing number of a tree.

\begin{Example}
Suppose that $T$ is a tree of order at least 2. We have $Z^+(T) = 1$ \cite[Example 46.4.3]{HPRY-HLA2ch46}, so Corollary \ref{HPRY-cor-zfp-leq-zpminus1} implies that $0 \leq Z_f^+(T) \leq Z^+(T) - 1 = 0$ and hence equality holds. If we let $\mathcal{L}$ be any leaf of $T$, then $\mathcal{B} = (\emptyset, \mathcal{L})$ is an optimal fractional PSD forcing set, so $Z_f^+(T) = 0$ and $\ellstar^+(T) = 1$.
\end{Example}


\section{Three-color interpretation of skew zero forcing} \label{HPRY-sec-skew}

In this section, we introduce a three-color interpretation of the skew zero forcing game and use this to show that the skew zero forcing number and ``fractional (zero) forcing number" of a graph are equal. Using the three-color interpretation, we derive new results pertaining to skew zero forcing number and the associated coloring process.

\subsection{The three-color skew zero forcing game} \label{HPRY-sec-skew-ZF}

Consider the following three-color forcing game played on a graph $G$. Choose an initial set of dark blue vertices, $\mathcal{D}$, and a set of light blue vertices, $\mathcal{L}$, and let $\mathcal{B} = (\mathcal{D}, \mathcal{L})$; color all other vertices of $G$ white. The forcing rule is as follows: 
\begin{Definition}[three-color skew zero forcing rule]
If $w$ is the only non-dark-blue neighbor of a dark blue or light blue vertex $u$, then $u$ can force $w$.
\end{Definition}

The set $\mathcal{B}$ is a \defbf{three-color skew zero forcing set} if $G$ can be forced after repeated application of the three-color skew zero forcing rule. We define
\[ \hat{Z}^-(G) = \min\left\{ |\mathcal{D}| : (\mathcal{D}, \mathcal{L}) \text{ is a three-color skew zero forcing set for $G$, for some $\mathcal{L}$} \right\}. \]
A three-color skew zero forcing set $\mathcal{B} = (\mathcal{D}, \mathcal{L})$ is \defbf{optimal} if $|\mathcal{D}| = \hat{Z}^-(G)$ and no such forcing set for $G$ has fewer light blue vertices than $\mathcal{B}$. Let $\ellstar^-(G)$ denote the number of light blue vertices in any optimal three-color skew zero forcing set for $G$, i.e., $\ellstar^-(G) = |\mathcal{L}|$.

The inclusion of the word ``skew" in the development of $\hat{Z}^-(G)$ is not an accident. It is easy to see that the three-color skew zero forcing game is equivalent to the (two-color) skew zero forcing game described in Section \ref{HPRY-sec-intro-ZF}: dark blue vertices correspond to (regular) blue vertices in two-color skew zero forcing, light blue vertices correspond to white vertices that perform white vertex forcing, and white vertices that do not perform a white vertex force are the same in both cases. Therefore, $\hat{Z}^-(G) = Z^-(G)$, and we are free to use the more familiar notation $Z^-(G)$ when discussing the three-color game.

\begin{Remark} \label{HPRY-rmk-zfp-leq-zm}
Notice that any three-color skew zero forcing set for a graph $G$ is also a fractional PSD forcing set for $G$: playing the three-color skew zero forcing game is equivalent to playing the fractional PSD zero forcing game without using the disconnect rule. Therefore, $Z_f^+(G) \leq Z^-(G)$.
\end{Remark}

From this point forward, since they give more information than their two-color counterparts, we will focus on three-color skew zero forcing sets, and usually omit the ``three-color" descriptor for the sake of brevity.

\subsection{General results for skew zero forcing}

The three-color interpretation easily lends itself to making observations about skew zero forcing number of a graph. The next two results are well-known for $Z^-(G)$ using the two-color approach, where we interpret $\ellstar^-(G)$ as the number of vertices that perform white vertex forces in that case.

\begin{Remark}
Any isolated vertex in a graph $G$ must be colored dark blue, so if $\delta(G) = 0$, then $Z^-(G) \geq \left| \{ u \in V(G) : \operatorname{deg}(u) = 0 \} \right| \geq 1$.
\end{Remark}

\begin{Observation}
If a graph $G$ has connected components $\{G_i\}_{i=1}^m$, then $Z^-(G) = \sum_{i=1}^m Z^-(G_i)$ and $\ellstar^-(G) = \sum_{i=1}^m \ellstar^-(G_i)$.
\end{Observation}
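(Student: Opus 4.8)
The plan is to prove the observation by establishing the two stated equalities separately, each via the standard two-inequality argument for additive graph parameters over connected components. The key structural fact is that in any three-color skew zero forcing game, a vertex $u$ can only force a neighbor $w$, so forces never cross between distinct connected components of $G$. Consequently, the forcing process on $G$ decomposes cleanly into independent forcing processes, one on each $G_i$.

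For the inequality $Z^-(G) \leq \sum_{i=1}^m Z^-(G_i)$ and the matching light-blue bound, I would take an optimal three-color skew zero forcing set $\mathcal{B}_i = (\mathcal{D}_i, \mathcal{L}_i)$ for each $G_i$ and form $\mathcal{B} = \left( \bigcup_i \mathcal{D}_i, \bigcup_i \mathcal{L}_i \right)$. Since $V(G) = \bigdcup_i V(G_i)$, these unions are disjoint, and running each component's forcing process in parallel forces all of $G$; thus $\mathcal{B}$ is a skew zero forcing set witnessing $|\mathcal{D}| = \sum_i |\mathcal{D}_i| = \sum_i Z^-(G_i)$. For the reverse inequality $Z^-(G) \geq \sum_i Z^-(G_i)$, I would take an optimal $\mathcal{B} = (\mathcal{D}, \mathcal{L})$ for $G$ and restrict it to each component, setting $\mathcal{D}_i = \mathcal{D} \cap V(G_i)$ and $\mathcal{L}_i = \mathcal{L} \cap V(G_i)$. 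Because no force in the process on $G$ ever uses a vertex outside the forced vertex's own component (the forcing rule depends only on neighborhoods, and $N(u) \subseteq V(G_i)$ when $u \in V(G_i)$), the restriction of the chronological list of forces to each $G_i$ is a valid forcing process for $(\mathcal{D}_i, \mathcal{L}_i)$ on $G_i$. Hence $Z^-(G_i) \leq |\mathcal{D}_i|$, and summing gives $\sum_i Z^-(G_i) \leq |\mathcal{D}| = Z^-(G)$, completing the first equality.

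The second equality, $\ellstar^-(G) = \sum_{i=1}^m \ellstar^-(G_i)$, requires a little more care because it concerns the number of light blue vertices in an \emph{optimal} set, where optimality first minimizes $|\mathcal{D}|$ and then minimizes $|\mathcal{L}|$. The forward construction above, using per-component optimal sets, already yields a skew zero forcing set on $G$ achieving $|\mathcal{D}| = \sum_i Z^-(G_i) = Z^-(G)$ and $|\mathcal{L}| = \sum_i \ellstar^-(G_i)$, so it is a candidate optimal set, giving $\ellstar^-(G) \leq \sum_i \ellstar^-(G_i)$. Conversely, restricting a globally optimal $\mathcal{B}$ to each $G_i$ produces sets $(\mathcal{D}_i, \mathcal{L}_i)$ that force $G_i$; since $|\mathcal{D}_i| = Z^-(G_i)$ must hold for each $i$ (any strict inequality in one component would, after summing, contradict $\sum_i |\mathcal{D}_i| = Z^-(G)$), each $(\mathcal{D}_i, \mathcal{L}_i)$ is an optimal-cardinality-$\mathcal{D}$ forcing set for $G_i$, whence $|\mathcal{L}_i| \geq \ellstar^-(G_i)$ by definition of $\ellstar^-(G_i)$. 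Summing yields $\ellstar^-(G) = |\mathcal{L}| \geq \sum_i \ellstar^-(G_i)$.

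The main obstacle is the second-level optimality argument for $\ellstar^-$: one must verify that restricting a globally optimal set to a component does not merely give \emph{some} forcing set but one that is optimal in its dark-blue count, so that the light-blue lower bound $|\mathcal{L}_i| \geq \ellstar^-(G_i)$ applies. This hinges on the elementary fact that $\sum_i |\mathcal{D}_i| = Z^-(G) = \sum_i Z^-(G_i)$ together with the component-wise inequalities $|\mathcal{D}_i| \geq Z^-(G_i)$ forces equality in every term. Everything else reduces to the fact that the forcing rule is entirely local to components, which makes both the construction and restriction directions routine.
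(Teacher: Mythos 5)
Your proof is correct. The paper offers no proof at all for this Observation (it is treated as routine, with the parallel PSD version also stated without justification), and your argument is exactly the standard one it leaves implicit: forces are local to components since the rule only ever involves a vertex and its neighbors, so optimal sets can be unioned in one direction and restricted in the other, and your equality-forcing step ($\sum_i |\mathcal{D}_i| = Z^-(G) = \sum_i Z^-(G_i)$ with $|\mathcal{D}_i| \geq Z^-(G_i)$ termwise) is precisely the right way to handle the lexicographic nature of optimality so that the bound $|\mathcal{L}_i| \geq \ellstar^-(G_i)$ may legitimately be invoked.
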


As is customary, we are able to focus our attention on connected graphs.

\begin{Remark}
For every connected graph $G$, $\delta(G) - 1 \leq Z^-(G)$. This is because if a candidate skew zero forcing set does not contain at least $\delta(G)-1$ dark blue vertices, then every dark blue or light blue vertex has at least two white or light blue neighbors, so the forcing process cannot start.
\end{Remark}

\begin{Remark}
Suppose that $G$ is a connected graph on 2 or more vertices and color each of its vertices dark blue. Any one adjacent pair can then be re-colored white and light blue (in either order), so $Z^-(G) \leq |G| - 2$.
\end{Remark}

\begin{Remark} \label{HPRY-rmk-skew-and-reg-zf-bds}
For every connected graph $G$, we have $Z^-(G) \leq Z(G) \leq Z^-(G) + \ellstar^-(G)$. The first inequality follows because every zero forcing set for a graph $G$ is also a skew zero forcing set for $G$. For the second, note that if $\mathcal{B} = (\mathcal{D}, \mathcal{L})$ is an optimal skew zero forcing set, then $B = \mathcal{D} \dcup \mathcal{L}$ is a (standard) zero forcing set.
\end{Remark}

The justification for the next observation is the same as that given in Remark \ref{HPRY-rmk-light-blue-P}.

\begin{Observation}
If $\mathcal{B} = (\mathcal{D}, \mathcal{L})$ is an optimal skew zero forcing set for a connected graph $G$, then any vertex that is colored light blue must perform a force before it is itself forced. No two light blue vertices in an optimal skew zero forcing set can be adjacent. The set $\mathcal{L}$ is an independent set in $G$, and $\ellstar^-(G) \leq \alpha(G)$.
\end{Observation}

\begin{Remark} \label{HPRY-rmk-ellstarm-bounds}
For a graph $G$, the quantity $|G| - Z^-(G)$ is the number of non-dark-blue vertices in an optimal skew zero forcing set. In the worst case, half of these vertices would need to be colored light blue to force their white neighbors, so $0 \leq \ellstar^-(G) \leq \left\lfloor \frac{|G| - Z^-(G)}{2} \right\rfloor \leq \left\lfloor \frac{|G|}{2} \right\rfloor < |G|$.
\end{Remark}

\subsection{Skew zero forcing as fractional zero forcing}

In this section, we develop an $r$-fold version of the standard zero forcing game and use it to prove that the ``fractional (zero) forcing number" of a graph is equal to the skew zero forcing number of the graph. This treatment is similar to the positive semidefinite case discussed in Sections \ref{HPRY-sec-rfold-PSD-intro} and \ref{HPRY-sec-global-PSD}.

Let $G$ be a graph and for some $r \in \mathbb{N}$ consider the following \defbf{$r$-fold forcing game}, which is a two-color forcing game played on $\blowup{G}{r}$, the $r$-blowup of $G$. As in any zero forcing game, we initially color some set $B \subseteq V(\blowup{G}{r})$ blue and then try to force $\blowup{G}{r}$ through repeated application of the following \defbf{$r$-fold forcing rule}:

\begin{Definition}[$r$-fold forcing rule]
At some step $t$ of the forcing process, let $B_t$ denote the set of blue vertices in $\blowup{G}{r}$. If $u \in B_t$ and $|N(u) \setminus B_t| \leq r$, then $u$ can force $N(u) \setminus B_t$, i.e., all white neighbors of $u$ can be colored blue simultaneously.
\end{Definition}

The $r$-fold forcing rule is exactly the $r$-forcing rule found in \cite{HPRY-davila-kforcing}, although applied to $\blowup{G}{r}$ instead of $G$. The $r$-fold forcing game was developed in the spirit of fractional graph theory \cite{HPRY-fgt}, while the $r$-forcing process described in \cite{HPRY-davila-kforcing} is more general. We have chosen to use different terminology with our treatment to emphasize this key difference.

If $\blowup{G}{r}$ can be forced, then the initial set of blue vertices is called an \defbf{$r$-fold forcing set} for $G$. A \defbf{minimum $r$-fold forcing set} is an $r$-fold forcing set of minimum cardinality. The \defbf{$r$-fold forcing number} of $G$, $\zr{r}(G)$, is the cardinality of a minimum $r$-fold forcing set.%
\footnote{Note that $\zr{r}(G) = F_r(\blowup{G}{r})$, where $F_k(H)$ is the $k$-forcing number of a graph $H$; see \cite{HPRY-davila-kforcing}.}
We define the \defbf{fractional forcing number} of $G$ as
\[ Z_f(G) = \inf_{r \in \mathbb{N}} \left\{ \frac{\zr{r}(G)}{r}  \right\}. \]

Clearly, $\zr{1}(G) = Z(G)$. By an argument similar to that used in Section \ref{HPRY-sec-rfold-PSD-intro}, it is easy to see that $\zr{r}(G) \leq r \cdot Z(G)$ for $r \geq 2$, so we can equivalently define fractional forcing number as
\[ Z_f(G) = \inf_{r \geq 2} \left\{ \frac{\zr{r}(G)}{r} \right\}. \]

Our goal in this section is to prove that $Z_f(G) = Z^-(G)$ for any graph $G$. In order to do this, we will follow an approach similar to that used in Section \ref{HPRY-sec-global-PSD}, with the noted difference that we have a three-color interpretation of skew zero forcing that can be used to simplify some of our arguments.

The global view of the $r$-fold forcing game, analogous to that of the $r$-fold positive semidefinite forcing game, will also be considered. Since backforcing does not apply to this game, in addition to All, One, and None clusters in $\blowup{G}{r}$, we consider one other type of cluster: a \defbf{Most cluster} is a cluster in which all but one vertex is colored blue. We consider Most clusters only for $r \geq 3$, as when $r = 2$ a Most cluster is equivalent to a One cluster. An \defbf{All-Most-One-None (AMON) $r$-fold forcing set} is an $r$-fold forcing set for $G$ that creates All, Most, One, and None clusters in $\blowup{G}{r}$. As before, we let $a(B)$ denote the number of All clusters and $\ell(B)$ denote the number of One clusters created in $\blowup{G}{r}$ by an AMON $r$-fold forcing set $B$; we introduce $m(B)$ to denote the number of Most clusters created by $B$. If $B$ is an AMON $r$-fold forcing set, then $|B| = r \cdot a(B) + (r-1) \cdot m(B) + \ell(B) = r \left( a(B) + m(B) \right) + \ell(B) - m(B)$.

Many of the remarks and observations from Section \ref{HPRY-sec-global-PSD} apply to the global interpretation of the $r$-fold forcing game and we omit or reduce their proofs. As before, we note that forcing into a cluster $R_u$ is equivalent to forcing $R_u$ and a cluster that is forced becomes an All cluster. Each cluster performs at most one force.

\begin{Theorem} \label{HPRY-thm-AMON-S}
For any graph $G$ and any $r \geq 2$, an AMON minimum $r$-fold forcing set for $G$ exists, as does a forcing process in which at each step either exactly one cluster is forced or a One cluster and a Most cluster (or, when $r = 2$, two One clusters) are forced simultaneously.
\end{Theorem}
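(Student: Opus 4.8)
The plan is to follow the proof of Theorem~\ref{HPRY-thm-AON-P} as closely as possible, replacing the per-component disconnect bookkeeping by the global white-neighbor count of the $r$-fold forcing rule and accounting for the two new features of the standard game: backforcing is unavailable, and a cluster may now be a Most cluster. I would begin with a minimum $r$-fold forcing set $B$ together with a chronological list of its forces, and analyze a single force $x \force W$ made by a vertex $x \in R_a$ at some step $\ell$. Let $R_{u_1}, \dots, R_{u_m}$ be the clusters into which this force colors at least one vertex, and let $w_j \geq 1$ denote the number of white vertices of $R_{u_j}$ just before step $\ell$. Because $x$ is adjacent to \emph{all} of $R_{u_j}$ and fills every white neighbor at once, forcing into a cluster turns it into an All cluster in a single step; hence each $R_{u_j}$ is only partially filled throughout the interval preceding $\ell$ and therefore carries exactly its initial blue vertices from $B$. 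This observation is the replacement for the backforcing argument in Theorem~\ref{HPRY-thm-AON-P}. The forcing rule gives $\sum_{j=1}^m w_j \leq r$, so $R_{u_1}, \dots, R_{u_m}$ together contain at least $(m-1)r$ blue vertices.

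Next I would read the admissible patterns directly from $\sum_{j} w_j \leq r$ with every $w_j \geq 1$. A None cluster contributes $w_j = r$ and so can only be forced by itself; a One cluster contributes $w_j = r-1$ and leaves room for a single additional white vertex, i.e.\ for one Most cluster ($w=1$), which is precisely the One+Most step; and when $r=2$ a Most cluster is a One cluster, so the pattern degenerates to two One clusters. Every other combination exceeds the budget $r$ and must be eliminated. To eliminate such forces I would redistribute blue among $R_{u_1}, \dots, R_{u_m}$ via consolidation. Since these clusters hold at least $(m-1)r$ blue vertices, none of them is colored into before step $\ell$, and (for those not performing an earlier force) moving their blue around cannot invalidate any prior force while adding blue only creates further forcing opportunities, consolidation concentrates the surplus into All clusters and leaves a single remainder, turning the step into a single-cluster force. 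Crucially, consolidation is cardinality-preserving, so the modified set remains minimum.

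The main obstacle, and where the absence of backforcing genuinely bites, is the validity of this redistribution when a cluster $R_{u_j}$ that is forced into at step $\ell$ has itself performed a force at an earlier step, since emptying it would break that force. I would handle this by using that all vertices of a cluster share a neighborhood, so a forcing cluster needs only one blue vertex to carry out its force; consolidation can then leave a single blue vertex in each forcing cluster while concentrating the remaining blue into All clusters (never into a None remainder of a forcing cluster). An adjacency check shows that two forcing clusters forced together at step $\ell$ must be non-adjacent, and feeding the constraint ``keep a blue vertex in each forcing cluster'' back into $\sum_j w_j \leq r$ forces the only admissible outcome to be one One cluster together with one Most cluster (two One clusters when $r=2$); minimality of $B$ then forbids deleting any of the remaining blue vertices, which rules out the intermediate and the multi-Most configurations and pins the step to exactly this pattern. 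The hard part is verifying rigorously that no irreducible force involving three or more forcing clusters can survive in a minimum set. Once this is established, iterating the redistribution over all steps converts $B$ into a minimum forcing set in which every cluster is All, Most, One, or None and each step forces either a single cluster or a One+Most pair (a One+One pair when $r=2$); a final check in the spirit of Remark~\ref{HPRY-rmk-AON-only-one-force} confirms that this AMON configuration yields a genuine forcing process under the $r$-fold forcing rule.
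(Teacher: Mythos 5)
Your strategy is essentially the paper's: start from a minimum $r$-fold forcing set, use the fact that a cluster forced into becomes an All cluster in a single step (so every cluster still containing white at step $\ell$ carries exactly its initial blue vertices), and consolidate the blue vertices among the clusters hit by a single force. Your white-budget analysis $\sum_j w_j \leq r$ and your adjacency observation are correct and necessary (in fact you need the slightly stronger form: any vertex that forced before step $\ell$, whether inside or outside the step-$\ell$ clusters, cannot be adjacent to any step-$\ell$ cluster, since its force would have filled that cluster; this is also what justifies removing blue vertices from non-forcing clusters, a point you gloss over). But your write-up has a genuine, self-acknowledged gap: you never resolve the case of a force into several clusters of which three or more have already performed forces. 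Your two consolidation schemes do not combine to cover it. Full consolidation into All clusters plus a single remainder (your second paragraph) can leave a None remainder, which is exactly what is forbidden when that cluster forced earlier; and your patch ``keep one blue vertex in each forcing cluster, push the rest into All clusters'' is not shown to respect the white budget or to land on the claimed pattern, which is why you end up labelling that configuration ``the hard part.''

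The gap dissolves if you make the consolidation target uniform, which is what the paper does: for every multi-cluster force, convert $R_{u_1}, \ldots, R_{u_{m-2}}$ into All clusters, convert $R_{u_{m-1}}$ into a Most cluster, and leave all remaining blue vertices in $R_{u_m}$. The $m$ clusters contain at least $(m-1)r$ blue vertices, and this spends $(m-2)r + (r-1) = (m-1)r - 1$ of them, so $R_{u_m}$ retains at least one blue vertex. Hence \emph{every} one of the $m$ clusters keeps at least one blue vertex ($r$, $r-1$, and at least $1$, respectively), and since all vertices of a cluster share a neighborhood --- and, by the adjacency observation, consolidation never alters the neighborhood of a cluster that forced earlier --- every earlier force performed from inside these clusters survives, no matter how many of them are forcing clusters. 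No case distinction on the number of forcing clusters is needed at all. Minimality then pins $R_{u_m}$ to a One cluster (otherwise delete its surplus blue; the step-$\ell$ force still fits the budget since $1 + (r-1) \leq r$), so the step becomes a simultaneous Most-plus-One force. This also explains why the theorem must allow such pairs rather than the single-cluster forces your second paragraph aims for: when every step-$\ell$ cluster has already forced and the white budget is tight, the blue vertices cannot be packed into $m-1$ All clusters while keeping one in the last, so Most-plus-One steps are genuinely unavoidable.
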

\begin{proof}
The result is trivially true for $r = 2$, so assume that $r \geq 3$. Let $B$ be a minimum $r$-fold forcing set for $G$ and suppose that $B$ is not AMON. Create a chronological list of forces in $\blowup{G}{r}$ and suppose that at step $\ell \geq 1$ we have $x \force R_u$ for some $u$, and $R_u$ is the only cluster forced at this step. If $R_u$ is not a One or a None cluster, then consider the set $B'$ obtained by replacing $R_u$ with a One cluster. Since $B'$ is a forcing set with fewer blue vertices than $B$, this contradicts that $B$ is minimum. Thus if a single cluster is forced at some step of the forcing process, then it is either a One or a None cluster.

Now, suppose that at step $\ell \geq 1$ we have $x \force W \subseteq \left( R_{u_1} \cup R_{u_2} \cup \cdots \cup R_{u_m} \right)$ for some $m \geq 2$, where each $R_{u_j}$ contains at least one white vertex. Since $x$ is performing a force, it has at most $r$ white neighbors. Thus we can perform a partial consolidation on the blue vertices spread among the $R_{u_j}$ as follows: Convert $R_{u_1}, R_{u_2}, \ldots, R_{u_{m-2}}$ into All clusters, convert $R_{u_{m-1}}$ into a Most cluster, and leave the remaining blue vertices in $R_{u_m}$. If we let $\widetilde{B}$ be the set obtained by performing this particular partial consolidation on $B$, then $\widetilde{B}$ is also a minimum $r$-fold forcing set for $G$. Notice that after partial consolidation, minimality of $B$ implies that $R_{u_m}$ must be a One cluster. Therefore, after partial consolidation, $x$ will force exactly two clusters, simultaneously -- a Most cluster and a One cluster.

By performing partial consolidation, each cluster will become an All, Most, One, or None cluster, and a forcing process exists with which at each step either a single One or None cluster will be forced, or a Most and a One cluster will be forced simultaneously.
\end{proof}

The type of AMON minimum $r$-fold forcing set guaranteed by Theorem \ref{HPRY-thm-AMON-S} is called an \defbf{optimal AMON $r$-fold forcing set} for $G$. We emphasize that optimal AMON forcing sets are minimum, so $\zr{r}(G)$ is the size of such a set, and there is a corresponding forcing process in which at most two clusters are forced simultaneously. Using such a set and the associated forcing process, $\blowup{G}{r}$ will have a global AMON structure at each forcing step.

\begin{Corollary} \label{HPRY-cor-ell-geq-m-S}
For every graph $G$ and $r \geq 2$, there exists an optimal AMON $r$-fold forcing set for $G$. If $B$ is any such set, then $\ell(B) \geq m(B)$.
\end{Corollary}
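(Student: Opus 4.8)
The existence of an optimal AMON $r$-fold forcing set is exactly the content of Theorem \ref{HPRY-thm-AMON-S}, so it remains to prove the inequality $\ell(B) \geq m(B)$ for an arbitrary optimal AMON set $B$. When $r = 2$ there are no Most clusters, so $m(B) = 0$ and the inequality is trivial; thus I would assume $r \geq 3$ throughout. My plan is to build an injection from the Most clusters of $B$ into the One clusters of $B$, using the forcing process associated with $B$ that is guaranteed by Theorem \ref{HPRY-thm-AMON-S}.

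The structural fact underpinning everything is that a cluster changes type only by being forced into, and that when a vertex forces it simultaneously turns \emph{all} of its white neighbors blue. Since any cluster adjacent to the forcing vertex is joined to it by a complete bipartite graph, forcing into a cluster colors the entire cluster blue in a single step. Consequently a cluster remains a None, One, or Most cluster until the step at which it becomes an All cluster, and it is never forced into more than once. In particular, every Most cluster appearing anywhere in the process is an initial Most cluster of $B$, and likewise every One cluster forced during the process is an initial One cluster of $B$.

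Next I would analyze the steps of the forcing process from Theorem \ref{HPRY-thm-AMON-S}. At a ``double'' step exactly one Most cluster and one One cluster are forced, while a ``single'' step forces one cluster. The key claim is that a Most cluster is never the cluster forced at a single step. Granting this, every Most cluster is forced at a double step, each double step forces exactly one Most cluster, and each double step also forces a distinct One cluster; sending each Most cluster to the One cluster forced at the same step is therefore a well-defined injection from the Most clusters into the One clusters, whence $m(B) \leq \ell(B)$.

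The main obstacle is establishing the key claim, which I would do by contradiction using minimality of $B$. Suppose a Most cluster $R_u$ is forced alone, by a vertex $x$, at step $\ell$. Since only $R_u$ is forced at that step, all white neighbors of $x$ lie in $R_u$, so $x$ has exactly one white neighbor (the lone white vertex of $R_u$). Moreover no neighbor of $R_u$ can have forced before step $\ell$: such a force would have colored the white vertex of $R_u$ blue and made $R_u$ an All cluster prematurely. Now recolor $R_u$ as a One cluster, obtaining $B'$ with $|B'| = |B| - (r-2) < |B|$. Because no vertex adjacent to $R_u$ forces before step $\ell$, the first $\ell - 1$ forces are unaffected (enlarging the white part of $R_u$ only makes those vertices harder to force, and they do not force anyway), and at step $\ell$ the vertex $x$ now sees the $r-1 \leq r$ white vertices of $R_u$ and can still force them all at once; after step $\ell$ the two colorings agree. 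Hence $B'$ is a forcing set smaller than $B$, contradicting minimality. This rules out a Most cluster being forced alone, making the injection well-defined and yielding $\ell(B) \geq m(B)$.
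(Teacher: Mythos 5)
Your proposal is correct and takes essentially the same route as the paper: the paper's proof of this corollary likewise pairs each Most cluster with the One cluster that is forced simultaneously with it in the forcing process guaranteed by Theorem \ref{HPRY-thm-AMON-S}. Your ``key claim'' (a Most cluster is never the lone cluster forced at a step) is exactly the fact the paper establishes, by the same minimality-plus-replacement argument, inside its proof of Theorem \ref{HPRY-thm-AMON-S} (``if a single cluster is forced at some step \ldots then it is either a One or a None cluster''), so your write-up merely makes that dependence explicit rather than citing it.
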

\begin{proof}
For each Most cluster in an optimal AMON $r$-fold forcing set there exists a corresponding One cluster that is forced simultaneously using the forcing process guaranteed by Theorem \ref{HPRY-thm-AMON-S}, so the number of Most clusters cannot exceed the number of One clusters.
\end{proof}

To obtain our main results of this section, we require a way to convert an AMON $r$-fold forcing set for $G$ into a (three-color) skew zero forcing set for $G$, and vice-versa.

\begin{Remark} \label{HPRY-rmk-conversion}
For $r \geq 2$, let $B$ be an optimal AMON $r$-fold forcing set for a graph $G$. Color $\blowup{G}{r}$ with $B$ and let $\widetilde{\mathcal{B}} = (\widetilde{\mathcal{D}}, \widetilde{\mathcal{L}})$, where $\widetilde{\mathcal{D}} = \{ u : R_u \text{ is an All or Most cluster} \}$ and $\widetilde{\mathcal{L}} = \{ u : R_u \text{ is a One cluster}\}$. It is easy to see that $\widetilde{\mathcal{B}}$ is a skew zero forcing set for $G$. Similarly, let $\mathcal{B} = (\mathcal{D}, \mathcal{L})$ be a skew zero forcing set for $G$. Color $\blowup{G}{r}$ according to the following rule: If $u \in \mathcal{D}$, then make $R_u$ an All cluster, and if $u \in \mathcal{L}$, then make $R_u$ a One cluster. The set $\widetilde{B}$ of blue vertices is an AMON $r$-fold forcing set for $G$ (with $m(\widetilde{B}) = 0$).
\end{Remark}

\begin{Definition}
Regardless of whether we transform an $r$-fold forcing set into a three-color skew zero forcing set or a three-color skew zero forcing set into an $r$-fold forcing set, we call the process described in Remark \ref{HPRY-rmk-conversion} \defbf{conversion}.
\end{Definition}

When performing conversion, we will always specify which type of set is being converted.

\begin{Proposition} \label{HPRY-prop-aplusm-equals-zm}
Let $G$ be a graph on $n$ vertices. If $r \geq n$ and $B$ is an optimal AMON $r$-fold forcing set, then $a(B) + m(B) = Z^-(G)$.
\end{Proposition}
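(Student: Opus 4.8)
The plan is to prove the equality $a(B) + m(B) = Z^-(G)$ by establishing two inequalities, mirroring the structure used in the positive semidefinite case (compare Proposition \ref{HPRY-prop-astar-homogeneity-P} and Theorem \ref{HPRY-thm-zfp-equals-zhat}). The central tool is the conversion process of Remark \ref{HPRY-rmk-conversion}, which passes between optimal AMON $r$-fold forcing sets and three-color skew zero forcing sets while tracking the quantity $a(B) + m(B)$.

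For the inequality $Z^-(G) \leq a(B) + m(B)$, I would take the given optimal AMON $r$-fold forcing set $B$ and apply conversion to produce a skew zero forcing set $\widetilde{\mathcal{B}} = (\widetilde{\mathcal{D}}, \widetilde{\mathcal{L}})$, where $\widetilde{\mathcal{D}}$ consists of the vertices whose clusters are All or Most clusters and $\widetilde{\mathcal{L}}$ consists of those whose clusters are One clusters. Remark \ref{HPRY-rmk-conversion} guarantees this is a valid skew zero forcing set, and by construction $|\widetilde{\mathcal{D}}| = a(B) + m(B)$. Since $Z^-(G)$ is the minimum number of dark blue vertices over all skew zero forcing sets, we immediately get $Z^-(G) \leq |\widetilde{\mathcal{D}}| = a(B) + m(B)$.

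For the reverse inequality $a(B) + m(B) \leq Z^-(G)$, I would start from an optimal three-color skew zero forcing set $\mathcal{B} = (\mathcal{D}, \mathcal{L})$ with $|\mathcal{D}| = Z^-(G)$ and convert it into an AMON $r$-fold forcing set $\widetilde{B}$ by making each vertex of $\mathcal{D}$ into an All cluster and each vertex of $\mathcal{L}$ into a One cluster. By Remark \ref{HPRY-rmk-conversion} this yields a valid $r$-fold forcing set with $m(\widetilde{B}) = 0$ and $a(\widetilde{B}) = |\mathcal{D}| = Z^-(G)$, so $a(\widetilde{B}) + m(\widetilde{B}) = Z^-(G)$. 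The remaining task is to compare the given optimal set $B$ against $\widetilde{B}$. Here I expect to need a minimality/counting argument analogous to Lemma \ref{HPRY-lem-AON-num-alls-large-r-P}: because $B$ is minimum, $|B| \leq |\widetilde{B}|$, and writing $|B| = r(a(B) + m(B)) + \ell(B) - m(B)$ together with the hypothesis $r \geq n$ (which bounds the low-order terms $\ell(B) - m(B)$, $\ell(\widetilde{B}) - m(\widetilde{B})$ below $n \leq r$ in absolute value) should force $a(B) + m(B) \leq a(\widetilde{B}) + m(\widetilde{B}) = Z^-(G)$ once the fractional part is controlled.

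The main obstacle I anticipate is the bookkeeping in this second inequality: unlike the AON case, the AMON decomposition carries a Most-cluster correction term, so the residual $\ell(B) - m(B)$ is not automatically nonnegative or bounded by $n$ in an obvious way. I would lean on Corollary \ref{HPRY-cor-ell-geq-m-S}, which gives $\ell(B) \geq m(B)$ so that $0 \leq \ell(B) - m(B)$, and on Remark \ref{HPRY-rmk-ellstarm-bounds}-type bounds to cap $\ell(B) - m(B)$ by something strictly less than $r$ when $r \geq n$; with both bounds in hand, dividing $|B| \leq |\widetilde{B}|$ through by $r$ and using that $a(B) + m(B)$ is an integer should close the argument exactly as in Lemma \ref{HPRY-lem-AON-num-alls-large-r-P}. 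The delicate point is verifying that the quantity $a + m$ (rather than $a$ alone) is the correct integer invariant on which the rounding argument operates, which is precisely why the Most-cluster count must be folded into the leading coefficient via the identity $|B| = r(a(B) + m(B)) + (\ell(B) - m(B))$.
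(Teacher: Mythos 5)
Your proposal is correct and follows the paper's own proof essentially step for step: the first inequality comes from converting $B$ into a skew zero forcing set via Remark \ref{HPRY-rmk-conversion}, and the second from converting an optimal skew zero forcing set into an AMON $r$-fold forcing set, invoking minimality of $B$, dividing by $r$, and closing with Corollary \ref{HPRY-cor-ell-geq-m-S}, Remark \ref{HPRY-rmk-ellstarm-bounds}, and integrality of $a(B)+m(B)$. The only (cosmetic) difference is in the rounding bookkeeping: the paper applies the floor function to both sides and therefore must separately rule out $\ell(B)=n$ (which it does by observing such a $B$ cannot be minimum when $r\geq n$), whereas your difference-of-integers formulation, as in Lemma \ref{HPRY-lem-AON-num-alls-large-r-P}, needs only $\ell(B)-m(B)\geq 0$ and $\ellstar^-(G)<n\leq r$, so the upper bound on $\ell(B)-m(B)$ that you flagged as a potential obstacle is not actually required.
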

\begin{proof}
Assume the hypotheses. Converting $B$ into a skew zero forcing set $\widetilde{\mathcal{B}} = (\widetilde{\mathcal{D}}, \widetilde{\mathcal{L}})$ yields $Z^-(G) \leq |\widetilde{\mathcal{D}}| = a(B) + m(B)$.

Now, let $\mathcal{B} = (\mathcal{D}, \mathcal{L})$ be an optimal skew zero forcing set for $G$ and convert $\mathcal{B}$ into an AMON $r$-fold forcing set $\widetilde{B}$. Since $B$ is optimal, it is minimum, so $|B| \leq |\widetilde{B}|$. Thus 
\[ a(B) + m(B) + \frac{\ell(B) - m(B)}{r} = \frac{|B|}{r} \leq \frac{|\widetilde{B}|}{r} = |\mathcal{D}| + \frac{|\mathcal{L}|}{r} = Z^-(G) + \frac{\ellstar^-(G)}{r} \, . \]
Since $0 \leq \ell(B) - m(B) \leq \ell(B)$ by Corollary \ref{HPRY-cor-ell-geq-m-S}, $\ellstar^-(G) < n$ by Remark \ref{HPRY-rmk-ellstarm-bounds}, and $n \leq r$ by assumption, applying the floor function through the above inequality yields $a(B) + m(B) \leq Z^-(G)$, provided that $\ell(B) < n$. This must be the case, because if $\ell(B) = n$ then $B$ cannot be minimum ($r \geq n$ implies that the first cluster forced could be a None).
\end{proof}

\begin{Corollary} \label{HPRY-cor-limit-zr-over-r}
If $G$ is a graph on $n$ vertices, then
\[ \lim_{r \to \infty} \frac{\zr{r}(G)}{r} = Z^-(G). \]
\end{Corollary}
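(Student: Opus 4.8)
The plan is to compute $\zr{r}(G)/r$ directly from the AMON decomposition and show that the only $r$-dependent term is a bounded quantity divided by $r$, which vanishes in the limit. Everything needed is already in place: Theorem \ref{HPRY-thm-AMON-S} guarantees an optimal AMON $r$-fold forcing set $B$ (which is minimum, so $\zr{r}(G) = |B|$), and the global count recorded just before that theorem gives
\[ \zr{r}(G) = |B| = r\bigl(a(B) + m(B)\bigr) + \bigl(\ell(B) - m(B)\bigr). \]
Dividing through by $r$ yields
\[ \frac{\zr{r}(G)}{r} = \bigl(a(B) + m(B)\bigr) + \frac{\ell(B) - m(B)}{r}. \]

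First I would restrict attention to $r \geq n$, which is harmless since we are taking a limit as $r \to \infty$. For such $r$, Proposition \ref{HPRY-prop-aplusm-equals-zm} identifies the leading term exactly: $a(B) + m(B) = Z^-(G)$. This reduces the expression to
\[ \frac{\zr{r}(G)}{r} = Z^-(G) + \frac{\ell(B) - m(B)}{r}. \]
It remains only to control the remainder term uniformly in $r$. By Corollary \ref{HPRY-cor-ell-geq-m-S} we have $\ell(B) \geq m(B)$, so $\ell(B) - m(B) \geq 0$; and since $\ell(B)$ counts One clusters among the $n$ total clusters of $\blowup{G}{r}$, we have $\ell(B) - m(B) \leq \ell(B) \leq n$. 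Thus $0 \leq \ell(B) - m(B) \leq n$, a bound independent of $r$.

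Combining these, for all $r \geq n$ we obtain $0 \leq \frac{\zr{r}(G)}{r} - Z^-(G) \leq \frac{n}{r}$, and letting $r \to \infty$ forces the difference to zero, establishing the claimed limit. I do not anticipate a genuine obstacle here; the result is essentially a bookkeeping corollary of the machinery developed above. The one point requiring a moment's care is confirming that the remainder $\ell(B) - m(B)$ is bounded by a constant depending only on $G$ (namely $n$) rather than on $r$, which is exactly what the ``at most $n$ clusters'' observation supplies.
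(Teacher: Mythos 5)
Your proof is correct and follows exactly the route the paper intends: the corollary is left without an explicit proof precisely because it is the bookkeeping consequence of Proposition \ref{HPRY-prop-aplusm-equals-zm} combined with the size formula $|B| = r(a(B)+m(B)) + \ell(B) - m(B)$ and the bound $0 \leq \ell(B) - m(B) \leq n$ that you spell out. Nothing is missing; your write-up simply makes the implicit argument explicit.
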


\begin{Proposition} \label{HPRY-prop-BoverR-geq-zm}
For any $r \geq 2$ and any graph $G$, $\frac{\zr{r}(G)}{r} \geq Z^-(G)$.
\end{Proposition}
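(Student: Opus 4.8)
The plan is to exploit the direct conversion between $r$-fold forcing sets and three-color skew zero forcing sets established in Remark \ref{HPRY-rmk-conversion}, which lets us avoid the two-case replication argument needed in the positive semidefinite setting (Lemma \ref{HPRY-lem-rfold-AON-lowerbound}). First I would invoke Corollary \ref{HPRY-cor-ell-geq-m-S} to fix an optimal AMON $r$-fold forcing set $B$ for $G$; since such a set is minimum, $\zr{r}(G) = |B|$, and by the AMON decomposition $|B| = r(a(B) + m(B)) + (\ell(B) - m(B))$.

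The key arithmetic step is to divide by $r$ and bound the fractional remainder from below. Dividing gives $\frac{\zr{r}(G)}{r} = a(B) + m(B) + \frac{\ell(B) - m(B)}{r}$. Since Corollary \ref{HPRY-cor-ell-geq-m-S} guarantees $\ell(B) \geq m(B)$, the final term is nonnegative, so $\frac{\zr{r}(G)}{r} \geq a(B) + m(B)$.

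It remains to show $a(B) + m(B) \geq Z^-(G)$. Here I would convert $B$ into a three-color skew zero forcing set $\widetilde{\mathcal{B}} = (\widetilde{\mathcal{D}}, \widetilde{\mathcal{L}})$ using Remark \ref{HPRY-rmk-conversion}, taking $\widetilde{\mathcal{D}}$ to be the vertices whose clusters are All or Most and $\widetilde{\mathcal{L}}$ the vertices whose clusters are One. Since this conversion produces a valid skew zero forcing set with $|\widetilde{\mathcal{D}}| = a(B) + m(B)$, the definition of $Z^-(G)$ immediately yields $Z^-(G) \leq a(B) + m(B)$. Chaining the two inequalities gives $\frac{\zr{r}(G)}{r} \geq Z^-(G)$, as desired.

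I do not anticipate a serious obstacle: all the heavy lifting has already been done in establishing the AMON structure (Theorem \ref{HPRY-thm-AMON-S}), the inequality $\ell(B) \geq m(B)$, and the validity of conversion. The only point requiring care is confirming that conversion applies to every $r \geq 2$ --- in particular that the resulting $\widetilde{\mathcal{B}}$ is genuinely a skew zero forcing set regardless of the relationship between $r$ and $n$ --- which is exactly what Remark \ref{HPRY-rmk-conversion} asserts. This uniform applicability is what makes the three-color approach cleaner here than the replication argument used for $Z_f^+$.
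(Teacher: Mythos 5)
Your proposal is correct and is essentially identical to the paper's proof: both take an optimal AMON $r$-fold forcing set $B$, use Corollary \ref{HPRY-cor-ell-geq-m-S} to drop the nonnegative term $\frac{\ell(B)-m(B)}{r}$, and then apply the conversion of Remark \ref{HPRY-rmk-conversion} to get $a(B)+m(B) = |\widetilde{\mathcal{D}}| \geq Z^-(G)$. No gaps; the paper performs exactly this chain of inequalities in a single display.
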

\begin{proof}
Let $B$ be an optimal AMON $r$-fold forcing set for $G$ and let $\mathcal{B} = (\mathcal{D}, \mathcal{L})$ be obtained by converting $B$ into a skew zero forcing set. By Corollary \ref{HPRY-cor-ell-geq-m-S}, $\ell(B) \geq m(B)$, so
\[ \frac{\zr{r}(G)}{r} = \frac{|B|}{r} = a(B) + m(B) + \frac{\ell(B) - m(B)}{r} \geq a(B) + m(B) = |\mathcal{D}| \geq Z^-(G). \qedhere \]
\end{proof}

\begin{Theorem} \label{HPRY-thm-zf-equals-skew}
For any graph $G$,
\[ Z_f(G) = Z^-(G). \]
\end{Theorem}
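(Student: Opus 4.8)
The plan is to sandwich $Z_f(G)$ between two inequalities, both of which have already been established in the preceding results, so that the theorem follows immediately. This mirrors exactly the structure of the positive semidefinite argument in Theorem \ref{HPRY-thm-characterize-zf-P}, where $\astar^+(G)$ played the role that $Z^-(G)$ plays here.

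First I would establish the lower bound $Z_f(G) \geq Z^-(G)$. Recall that by definition $Z_f(G) = \inf_{r \geq 2}\left\{ \zr{r}(G)/r \right\}$. Proposition \ref{HPRY-prop-BoverR-geq-zm} asserts that every term in this family satisfies $\zr{r}(G)/r \geq Z^-(G)$; since $Z^-(G)$ is a uniform lower bound for the quantities being minimized, it is a lower bound for their infimum, and hence $Z_f(G) \geq Z^-(G)$.

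Next I would establish the reverse inequality $Z_f(G) \leq Z^-(G)$. Here I would invoke Corollary \ref{HPRY-cor-limit-zr-over-r}, which gives $\lim_{r \to \infty} \zr{r}(G)/r = Z^-(G)$. Since the infimum of a family of real numbers is at most any limit of a subsequence drawn from that family, taking $r \to \infty$ forces $Z_f(G) \leq Z^-(G)$. Combining the two bounds yields $Z_f(G) = Z^-(G)$, as claimed.

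I do not expect any genuine obstacle at this stage: all of the substantive work has already been carried out in Theorem \ref{HPRY-thm-AMON-S}, Corollary \ref{HPRY-cor-ell-geq-m-S}, and Propositions \ref{HPRY-prop-aplusm-equals-zm} and \ref{HPRY-prop-BoverR-geq-zm}, which together control the AMON structure and pin down the value of $a(B) + m(B)$ for large $r$. The only point requiring even minimal care is the elementary observation that an infimum over $r \geq 2$ cannot exceed the limit of the same sequence as $r \to \infty$; once that is noted, the two inequalities close the argument with no further computation.
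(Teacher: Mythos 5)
Your proof is correct and is essentially the argument the paper intends: the theorem is stated without an explicit proof precisely because it follows immediately by combining Proposition \ref{HPRY-prop-BoverR-geq-zm} (giving $\zr{r}(G)/r \geq Z^-(G)$ for all $r \geq 2$, hence $Z_f(G) \geq Z^-(G)$) with Corollary \ref{HPRY-cor-limit-zr-over-r} (the limit as $r \to \infty$ equals $Z^-(G)$, hence $Z_f(G) \leq Z^-(G)$). Your sandwich argument, mirroring Theorem \ref{HPRY-thm-characterize-zf-P}, matches this exactly.
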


\subsection{Leaf-stripping and skew zero forcing number} \label{HPRY-sec-leaf-stripping}

In this section, we prove results about graphs with leaves and show that skew zero forcing number is unchanged by removing leaves and their neighbors. A leaf-stripping algorithm is presented and used to characterize graphs $G$ that have $Z^-(G) = 0$. For convenience, we define $Z^-(\emptyset) = 0$.

\begin{Lemma} \label{HPRY-lem-leaf-properties-S}
Let $G$ be a graph with leaf $u \in V(G)$ and let $v \in V(G)$ be the neighbor of $u$. Let $\mathcal{B} = (\mathcal{D}, \mathcal{L})$ be an optimal skew zero forcing set for $G$. i) If $u$ is either light or dark blue, then $v$ is white. ii) If $u$ is white, then $v$ is not dark blue.
\end{Lemma}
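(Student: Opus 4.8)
The plan is to reason directly from the optimality of $\mathcal{B} = (\mathcal{D}, \mathcal{L})$ and the structure of the three-color skew zero forcing rule, leaning on the fact (Remark~\ref{HPRY-rmk-light-blue-P}/Observation) that every light blue vertex must perform a force before being forced itself, and that $\mathcal{L}$ is an independent set. Both parts will be arguments by contradiction: in each case I assume the forbidden coloring and exhibit a strictly cheaper forcing set, violating optimality (either fewer dark blue vertices, or the same number of dark blue but fewer light blue).

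\textbf{Part (i): if $u$ is light or dark blue, then $v$ is white.} Since $u$ is a leaf, its unique neighbor is $v$. First suppose $u$ is light blue. By the optimality observation, $u$ must force before being forced; the only vertex $u$ can ever force is $v$, so $u \force v$ at some point, which requires $v$ to be (eventually) the lone non-dark-blue neighbor of $u$ — automatic, as $v$ is $u$'s only neighbor. For $u$ to perform this force it must not itself already be dark blue by the rule's perspective (light blue vertices are allowed to force), and crucially $v$ must still be non-dark-blue when the force occurs, so $v$ cannot start dark blue. If $v$ started light blue, then $u$ and $v$ would be two adjacent light blue vertices, contradicting that $\mathcal{L}$ is independent. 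Hence $v$ is white. Now suppose $u$ is dark blue. If $v$ were also colored (light or dark blue), then $v$ contributes nothing the force $u$ could not already accomplish, and I would recolor $v$ white: I must check this yields a forcing set with no more dark blue and no more light blue vertices. Since $u$ is dark blue and $v$ is its only neighbor, $u$ can immediately force $v$ (as $v$ is then the unique non-dark-blue neighbor of $u$), restoring $v$ to dark blue at the first step, after which the original forcing process proceeds unchanged. This recoloring strictly decreases $|\mathcal{D}|$ (if $v$ was dark blue) or $|\mathcal{L}|$ (if $v$ was light blue) while keeping $\mathcal{B}$ a forcing set, contradicting optimality. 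Thus $v$ is white.

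\textbf{Part (ii): if $u$ is white, then $v$ is not dark blue.} Suppose for contradiction that $u$ is white and $v$ is dark blue. Consider recoloring $v$ to be light blue instead of dark blue. Because $u$ is the unique non-dark-blue neighbor of $v$ (being a leaf, $v$'s only chance to have $u$ as its sole such neighbor is through the edge $uv$; any other non-dark-blue neighbors of $v$ would block the force, but in that case $v$ light blue can still force exactly when $v$ dark blue could), I claim a light blue $v$ can perform precisely the same first force $v \force u$ that dark blue $v$ could — the three-color skew rule permits a light blue vertex to force its unique non-dark-blue neighbor. After this force $u$ becomes dark blue, and by backforcing $v$ (all of whose neighbors are now dark blue) $v$ returns to dark blue, so the remainder of the original process is unaffected. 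This produces a forcing set with $|\mathcal{D}|$ reduced by one, contradicting the optimality (in fact minimality of $|\mathcal{D}|$) of $\mathcal{B}$. Hence $v$ is not dark blue.

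\textbf{Anticipated obstacle.} The delicate point in both parts is verifying that the local recoloring does not disrupt \emph{later} forces: I must argue that after the modified first step the coloring agrees with the original, so the remaining chronological list of forces carries over verbatim. In Part~(ii) the subtlety is confirming that a light blue $v$ can genuinely replicate the force a dark blue $v$ performed — this rests on the fact that the skew forcing rule treats dark blue and light blue symmetrically as potential forcing vertices, differing only in whether the vertex counts as ``target-colored'' for the purpose of being a non-dark-blue neighbor. Getting the bookkeeping right on which vertices count as non-dark-blue at the moment of the force (and ensuring $v$ itself, now light blue, does not become an obstruction to some \emph{other} vertex's force) will be the main thing to check carefully.
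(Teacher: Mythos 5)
Your part (i) is essentially correct and matches the paper's idea: recolor the offending neighbor white, let $u$ perform the first force $u \force v$, and contradict optimality (fewer dark blue vertices if $v$ was dark blue, fewer light blue vertices if $v$ was light blue). Part (ii), however, has a genuine gap. Your move is to recolor $v$ from dark blue to light blue while keeping $u$ white, claiming the new set still forces $G$ because a light blue $v$ ``can force exactly when a dark blue $v$ could'' and can then be backforced. This ignores precisely the obstacle you flag at the end and never resolve: once $v$ is light blue, it counts as a non-dark-blue neighbor of every vertex adjacent to it, and this can block forces that $v$'s other neighbors performed early in the original process, long before $v$ is in a position to force $u$ and be backforced. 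The intermediate claim is in fact false for forcing sets in general. Take $G$ with vertices $u,v,x_1,x_2,y$ and edges $uv$, $vx_1$, $vx_2$, $x_1y$, $x_2y$, and take $\mathcal{D}=\{v,x_1\}$, $\mathcal{L}=\emptyset$: this is a skew zero forcing set with $u$ white and $v$ dark blue (via $x_1 \force y$, $y \force x_2$, $v \force u$), but after your recoloring, $\mathcal{D}'=\{x_1\}$, $\mathcal{L}'=\{v\}$ is stuck at the very first step, since $x_1$ sees two non-dark-blue neighbors ($v$ and $y$) and $v$ sees two ($u$ and $x_2$). Under your contradiction hypothesis the set is assumed optimal, so your claim is only vacuously true (no such optimal set exists, which is what is being proved); a proof by contradiction cannot rest on a step whose only support is the vacuous truth of the theorem itself, and your written justification never engages the blocking problem that the counterexample exhibits.

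The paper sidesteps this with a different swap: it sets $\widetilde{\mathcal{B}} = (\mathcal{D}\setminus\{v\},\, \mathcal{L}\cup\{u\})$, i.e., the \emph{leaf} $u$ becomes light blue and $v$ becomes white. This always works, for any forcing set with $u$ white and $v$ dark blue: $u \force v$ is available as the very first force because $v$ is $u$'s unique neighbor, and after it the coloring dominates the original initial coloring (the only difference is that $u$ is light blue rather than white, which no other vertex's forcing condition can detect), so the original chronological list of forces carries over and the new set has one fewer dark blue vertex, contradicting $|\mathcal{D}| = Z^-(G)$. In the counterexample above this swap indeed succeeds: $u \force v$, $x_1 \force y$, $y \force x_2$, $v \force u$. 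Replacing your recoloring of $v$ with this one repairs your part (ii) while preserving its overall structure.
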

\begin{proof}
For the first claim, since $u \in \mathcal{B}$ and $v$ is the only neighbor of $u$, we can choose $u \force v$ as the first step in the forcing process. In this case $v$ must be white because $\mathcal{B}$ is optimal. For the second claim, if $v \in \mathcal{D}$, then the set $\widetilde{\mathcal{B}} = (\mathcal{D} \setminus \{v\}, \mathcal{L} \cup \{u\})$ has fewer dark blue vertices than $\mathcal{B}$ but is a skew zero forcing set for $G$, contradicting the optimality of $\mathcal{B}$.
\end{proof}

\begin{Theorem} \label{HPRY-thm-strip-num-unchanged-S}
If $G$ is a graph with leaf $u \in V(G)$ and $v \in V(G)$ is the neighbor of $u$, then $Z^-(G - \{u,v\}) = Z^-(G)$.
\end{Theorem}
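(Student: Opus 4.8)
The plan is to prove the two inequalities $Z^-(G) \le Z^-(H)$ and $Z^-(H) \le Z^-(G)$ separately, where I write $H = G - \{u,v\}$.

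For $Z^-(G) \le Z^-(H)$, I would start from an optimal skew zero forcing set $(\mathcal{D}_H, \mathcal{L}_H)$ for $H$ and extend it to $G$ by coloring the leaf $u$ light blue and $v$ white, leaving the colors on $V(H)$ unchanged. Since $u$'s only neighbor is $v$, the first force can be taken to be $u \force v$, after which $v$ is dark blue. Because a dark blue $v$ is never counted among the non-dark-blue neighbors of a vertex of $N(v) \cap V(H)$, every force of the $H$-process remains valid in $G$, so all of $H$ is forced; finally $v \force u$ backforces the leaf. This uses no dark blue vertices beyond $\mathcal{D}_H$, giving $Z^-(G) \le |\mathcal{D}_H| = Z^-(H)$.

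For the reverse inequality I would take an optimal set $\mathcal{B} = (\mathcal{D}, \mathcal{L})$ for $G$ and restrict it to $H$. By Lemma \ref{HPRY-lem-leaf-properties-S} we have $v \notin \mathcal{D}$. If $u \notin \mathcal{D}$ as well (the generic situation), the central claim is that in a suitable forcing process the only forces involving the pair $\{u,v\}$ are $u \force v$ and $v \force u$; in particular $v$ never forces a vertex of $H$. This holds because, whenever $u$ is light blue, $u$ must force before it is forced (the skew analogue of Remark \ref{HPRY-rmk-light-blue-P}), so $u \force v$ can be taken first, and as long as $u$ stays light blue it is a non-dark-blue neighbor of $v$, preventing $v$ from forcing any vertex of $H$; the vertex $v$ forces only $u$, and only once all of $N(v) \cap V(H)$ is dark blue. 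Granting this, the restriction $(\mathcal{D} \cap V(H), \mathcal{L} \cap V(H))$ is a skew zero forcing set for $H$: every remaining force is internal to $H$, and once $v$ is dark blue (or simply deleted) the non-dark-blue neighborhood of each vertex of $H$ is unchanged by removing $v$, so the $H$-internal forces replay verbatim. Since $u,v \notin \mathcal{D}$, this set has exactly $|\mathcal{D}|$ dark blue vertices, yielding $Z^-(H) \le |\mathcal{D}| = Z^-(G)$.

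The step I expect to be the main obstacle is the exceptional case in which the leaf $u$ is forced to be dark blue precisely so that, after $u \force v$, the vertex $v$ can force some $y \in N(v) \cap V(H)$; here the blocking argument above no longer applies. The key observation is that $u \in \mathcal{D}$ while $v \notin \mathcal{D}$ leaves one dark blue vertex to spare. Because a vertex forces at most once in skew forcing, $y$ is the unique such target, so recoloring $y$ dark blue and discarding $u$ produces a forcing set for $H$ with $(|\mathcal{D}| - 1) + 1 = |\mathcal{D}|$ dark blue vertices; all remaining forces are $H$-internal and valid as before. Throughout, reordering the process so that $u \force v$ occurs first is justified by the monotonicity of the skew forcing rule — turning a vertex dark blue earlier can only shrink non-dark-blue neighborhoods and hence never invalidates a later force — and this decoupling of the pendant pair from the rest of the process is what makes both directions go through.
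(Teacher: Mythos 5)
Your proof is correct and takes essentially the same approach as the paper's: the identical light-blue-leaf extension gives $Z^-(G) \le Z^-(G - \{u,v\})$, and for the reverse inequality your two cases correspond to the paper's three --- your blocking argument for $u \notin \mathcal{D}$ merges the paper's cases $u \in \mathcal{L}$ and $u$ white into one, while your swap of $u$'s dark blue onto the unique vertex $y$ forced by $v$ is exactly the paper's treatment of $u \in \mathcal{D}$. The only omission is the trivial sub-case where $u \in \mathcal{D}$ but $v$ never forces a vertex of $G - \{u,v\}$ (the paper rules it out by optimality of $\mathcal{B}$); there, discarding $u$ and restricting already yields a skew zero forcing set for $G - \{u,v\}$ with $|\mathcal{D}| - 1$ dark blue vertices, so the desired inequality still holds.
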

\begin{proof}
Suppose that $\widetilde{\mathcal{B}} = (\widetilde{\mathcal{D}}, \widetilde{\mathcal{L}})$ is an optimal skew zero forcing set for $\widetilde{G} = G - \{u,v\}$ and let $\mathcal{D} = \widetilde{\mathcal{D}}$, $\mathcal{L} = \widetilde{\mathcal{L}} \cup \{ u\}$, and $\mathcal{B} = (\mathcal{D}, \mathcal{L})$. Carry out the forcing process on $G$ using $\mathcal{B}$ for the initial coloring, starting with $u \force v$. Since $v$ is then dark blue, it does not affect the ability of its neighbors to force. Thus the forcing process on $G$ can be continued until $\widetilde{G}$ is forced, since $\widetilde{\mathcal{B}} = \mathcal{B} \setminus \{u\}$ is a skew zero forcing set for $\widetilde{G}$. The final force can then be $v \force u$, which forces $G$, so $\mathcal{B}$ is a skew zero forcing set for $G$ with $Z^-(\widetilde{G})$ dark blue vertices. Thus $Z^-(G) \leq Z^-(\widetilde{G})$.

Now suppose that $\mathcal{B} = (\mathcal{D}, \mathcal{L})$ is an optimal skew zero forcing set for $G$; we consider three cases. As before, $\widetilde{G}$ will denote $G - \{u,v\}$.

First, if $u \in \mathcal{L}$, then $v$ is white by Lemma \ref{HPRY-lem-leaf-properties-S} and $u \force v$ can be taken as the first step of the forcing process. Without loss of generality, we can assume that $v \force u$ is the last step of the forcing process. By continuing the forcing process, we will color $\widetilde{G}$ completely dark blue, since $\mathcal{B}$ is a skew zero forcing set for $G$ and $v$ cannot force any vertex in $\widetilde{G}$; thus $\mathcal{B} \setminus \{u\}$ is a skew zero forcing set for $\widetilde{G}$ with $Z^-(G)$ dark blue vertices, so $Z^-(\widetilde{G}) \leq Z^-(G)$.

Next, suppose that $u \in \mathcal{D}$; again, by Lemma \ref{HPRY-lem-leaf-properties-S}, $v$ is white and $u \force v$ can be chosen as the first step of the forcing process. If $v$ never subsequently forces any of its other neighbors, then $\mathcal{B}$ is not optimal, since $u$ could have been chosen as a light blue vertex instead of a dark blue vertex (and then $v \force u$ could be the final step in the new forcing process). Thus $v$ must eventually force one of its neighbors, say $w$. It must be the case that at that stage all neighbors of $v$ (except $w$) are colored dark blue, and since $v$ is itself dark blue it did not affect any of the forces that led to this state. Therefore, if we let $\widetilde{\mathcal{D}} = (\mathcal{D} \setminus \{u\}) \cup \{w\}$ and $\widetilde{\mathcal{B}} = (\widetilde{\mathcal{D}}, \mathcal{L})$, we will have a set containing $Z^-(G)$ dark blue vertices that can color all of $\widetilde{G}$ dark blue. We see that $Z^-(\widetilde{G}) \leq Z^-(G)$.

Lastly, suppose that $u$ is white, so $v$ is not dark blue by Lemma \ref{HPRY-lem-leaf-properties-S}. There is a point in time after which $v$ will be dark blue; all forces prior to this time (except possibly $v \force u$ in the case where $v$ is light blue) do not involve $v$ in any way, and all forces after this time (except possibly $v \force u$) can be performed regardless of the presence of $v$, as it is dark blue. Let $\widetilde{\mathcal{B}} = (\mathcal{D}, \widetilde{\mathcal{L}})$, where $\widetilde{\mathcal{L}} = \mathcal{L} \setminus \{v\}$ if $v \in \mathcal{L}$ and $\widetilde{\mathcal{L}} = \mathcal{L}$ otherwise.  Then $\widetilde{\mathcal{B}}$ can completely force $\widetilde{G}$, so $Z^-(\widetilde{G}) \leq Z^-(G)$.
\end{proof}

Motivated by this result, we present a \defbf{leaf-stripping algorithm} that can be used to reduce a graph $G$ to a smaller graph with the same skew zero forcing number. This algorithm is a modification of Algorithm 3.16 in \cite{HPRY-GHHHJKM-mr0}.

\setcounter{algocf}{\value{Theorem}}

\vspace{6pt}
\begin{algorithm2e}[H]
\DontPrintSemicolon

\KwIn{Graph $G$}
\KwOut{Graph $\hat G$ with $\delta(\hat G) \neq 1$, or $\hat{G} = \emptyset$}

\BlankLine

$\hat G := G$\;

\While{{\rm $\hat G$ has a leaf $u$ with neighbor} $v$}{
	
$\hat G := \hat G - \{u,v\}$\;
}
\Return{$\hat G$}\;
\caption{Leaf-stripping algorithm}
\label{HPRY-alg-leaf-stripping-S}
\end{algorithm2e}

\stepcounter{Theorem}

\vspace{6pt}

\begin{Theorem} \label{HPRY-zm-0-characterization-S}
Let $G$ be a graph and let $\hat{G}$ be the graph returned by Algorithm \ref{HPRY-alg-leaf-stripping-S}. Then
\begin{enumerate}[i.]
\item $Z^-(G) = Z^-(\hat{G})$; and
\item $Z^-(G) = 0$ if and only if $\hat{G} = \emptyset$.
\end{enumerate}
\end{Theorem}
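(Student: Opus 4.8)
The plan is to establish part (i) by induction on the number of iterations of the while loop, and then derive part (ii) as a consequence.

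First I would observe that the algorithm terminates: every pass through the loop deletes two distinct vertices $u$ and $v$, so after finitely many passes no leaf remains (or the graph becomes empty). Let $G = G_0, G_1, \ldots, G_k = \hat{G}$ be the resulting sequence of graphs, where $G_{i+1} = G_i - \{u_i, v_i\}$ for some leaf $u_i$ of $G_i$ with neighbor $v_i$. The crucial point is that Theorem \ref{HPRY-thm-strip-num-unchanged-S} is stated for an arbitrary graph possessing a leaf and does not require connectivity, so it applies verbatim at each step and gives $Z^-(G_{i+1}) = Z^-(G_i)$ even when a deletion disconnects the graph or produces new isolated vertices. A routine induction then yields $Z^-(G) = Z^-(G_0) = Z^-(G_k) = Z^-(\hat{G})$, which is part (i).

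For part (ii), the reverse implication is immediate: if $\hat{G} = \emptyset$, then part (i) together with the convention $Z^-(\emptyset) = 0$ gives $Z^-(G) = Z^-(\hat{G}) = 0$. For the forward implication I would argue the contrapositive, showing that $\hat{G} \neq \emptyset$ forces $Z^-(\hat{G}) \geq 1$ (and hence $Z^-(G) \geq 1$ by part (i)). Because the algorithm has halted, $\delta(\hat{G}) \neq 1$, so I would split into two cases. If $\delta(\hat{G}) = 0$, then $\hat{G}$ contains an isolated vertex, which must be colored dark blue in every skew zero forcing set, giving $Z^-(\hat{G}) \geq 1$. If $\delta(\hat{G}) \geq 2$, I would consider any candidate set $(\mathcal{D}, \mathcal{L})$ with $\mathcal{D} = \emptyset$: the first force must be performed by a light blue vertex $u$, since there are no dark blue vertices, and the three-color skew zero forcing rule requires all but one neighbor of $u$ to be dark blue. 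As no vertex is dark blue at the outset, $u$ would need to have degree $1$, contradicting $\delta(\hat{G}) \geq 2$. Thus no set with $\mathcal{D} = \emptyset$ can force $\hat{G}$, so $Z^-(\hat{G}) \geq 1$ in this case as well.

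The main obstacle is precisely the forward direction of part (ii): the governing insight is that when $\mathcal{D} = \emptyset$ the process can only be initiated by a leaf, because every neighbor of any potential forcing vertex is non-dark-blue at the start. Turning this ``the process cannot even begin'' observation into a clean argument, while also separating the degenerate subcase in which $\hat{G}$ is a nonempty graph of isolated vertices from the subcase $\delta(\hat{G}) \geq 2$, is where the care lies; part (i) is then merely a bookkeeping induction built on Theorem \ref{HPRY-thm-strip-num-unchanged-S}.
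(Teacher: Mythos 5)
Your proof is correct and follows essentially the same route as the paper: part (i) by iterating Theorem \ref{HPRY-thm-strip-num-unchanged-S} over the loop, and part (ii) by handling $\hat{G} = \emptyset$ via the convention $Z^-(\emptyset) = 0$ and otherwise splitting on $\delta(\hat{G}) = 0$ versus $\delta(\hat{G}) \geq 2$. The only cosmetic difference is that where the paper cites its standing remark $\delta(G) - 1 \leq Z^-(G)$, you re-derive that bound inline by noting that with $\mathcal{D} = \emptyset$ no dark or light blue vertex can perform a first force unless it has degree one.
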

\begin{proof}
The first claim follows by repeated application of Theorem \ref{HPRY-thm-strip-num-unchanged-S}. As a result, if $\hat{G} = \emptyset$, then $Z^-(G) = 0$, which proves one direction of the second claim. For the other direction, suppose that Algorithm \ref{HPRY-alg-leaf-stripping-S} does not return the empty set. If $\delta(\hat{G}) = 0$, then $1 \leq Z^-(\hat{G})$. If $\delta(\hat{G}) \geq 2$, then $1 \leq \delta(\hat{G}) - 1 \leq Z^-(\hat{G})$. In either case, $1 \leq Z^-(\hat{G}) = Z^-(G)$, which completes the proof.
\end{proof}

We immediately see that if $G$ is a graph on an odd number of vertices, then $Z^-(G) > 0$. Additionally, if $G$ is a graph with $Z^-(G) = 0$, then $G$ has a unique perfect matching; if the leaf-stripping algorithm is applied to $G$, then each removed leaf and its neighbor contribute an edge to this perfect matching. Note that having a unique perfect matching is not sufficient to guarantee that $Z^-(G) = 0$, as the next example shows.

\begin{Example}
Consider the graph $G$ shown in Figure \ref{HPRY-fig-perfect-match-ctrex}. The thick edges in the figure show the unique perfect matching for $G$, but since $\delta(G) = 2$, we have $Z^-(G) \geq 2-1 = 1$. In fact, $Z^-(G) = 1$, and the forcing set $\mathcal{B}$ shown in Figure \ref{HPRY-fig-perfect-match-ctrex} is optimal.
\end{Example}

\begin{figure}[h]
\begin{center}
\includegraphics[scale=0.8]{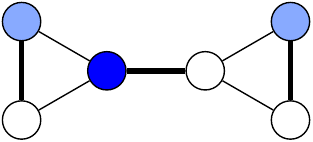}
\caption{Graph $G$ with unique perfect matching and $Z^-(G) > 0$}
\label{HPRY-fig-perfect-match-ctrex}
\end{center}
\end{figure}

\begin{Remark}
If $G$ is a graph on $n$ vertices, then Algorithm \ref{HPRY-alg-leaf-stripping-S} returns the graph $\hat{G}$ in at most $\left\lfloor \frac{n}{2} \right\rfloor$ leaf-stripping steps. Theorem \ref{HPRY-zm-0-characterization-S} asserts that if $Z^-(\hat{G})$ is known, then the algorithm has computed $Z^-(G) = Z^-(\hat{G})$. In particular, if $G = T$ is a tree, then necessarily $\hat{G} = p K_1$ for some $p \geq 0$ and $Z^-(T) = p$.
\end{Remark}

A natural question is whether we can prove a version of Theorem \ref{HPRY-zm-0-characterization-S} that applies to the fractional positive semidefinite forcing game. If Algorithm \ref{HPRY-alg-leaf-stripping-S} returns the empty set when applied to a graph $G$, then by Remark \ref{HPRY-rmk-zfp-leq-zm} and Theorem \ref{HPRY-zm-0-characterization-S} we have $0 \leq Z_f^+(G) \leq Z^-(G) = 0$, and so equality holds for one direction. The converse may fail, however: the graph $G$ in Examples \ref{HPRY-ex-not-AON-after-replication} and \ref{HPRY-ex-Kmn-ctrex} satisfies $Z_f^+(G) = 0$, but applying the algorithm to $G$ would return the nonempty partite set on 2 vertices. While we cannot generate a positive semidefinite analogue of Theorem \ref{HPRY-zm-0-characterization-S}, the result can still be a useful tool when Algorithm \ref{HPRY-alg-leaf-stripping-S} returns the empty set.

\section*{Acknowledgements}

This research has been supported in part by Iowa State University Holl Chair funds. Kevin F. Palmowski is supported by an Iowa State University Department of Mathematics Lambert Graduate Research Assistantship. David E. Roberson is supported in part by the Singapore National Research Foundation under NRF RF Award No. NRF-NRFF2013-13. Some of this work was done while Leslie Hogben and Kevin F. Palmowski were visiting the Institute for Mathematics and its Applications (IMA); they thank IMA both for financial support (from NSF funds) and for providing a wonderful collaborative research environment. The authors also extend many thanks to Steve Butler for help with computations.


\end{document}